\theoremstyle{definition}
\newtheorem{defi}{Definition}[section]
\newtheorem{theorem}[defi]{Theorem}
\newtheorem{corollary}[defi]{Corollary}
\newtheorem{lemma}[defi]{Lemma}
\newtheorem{prop}[defi]{Proposition}
\newtheorem{remark}[defi]{Remark}
\DeclareMathOperator{\Coker}{Coker}
\DeclareMathOperator{\spa}{span}
\DeclareMathOperator{\supp}{supp}
\DeclareMathOperator{\lk}{lk}
\DeclareMathOperator{\st}{st}
\DeclareMathOperator{\sd}{sd}
\DeclareMathOperator{\Soc}{Soc}
\providecommand{\keywords}[1]{\textbf{\textit{Keywords---}} #1}
\title{Multigraded strong Lefschetz property for balanced simplicial complexes}
\author{Ryoshun Oba\thanks{Department of Mathematical Informatics, Graduate School of Information Science and Technology, University of Tokyo, 7-3-1 Hongo, Bunkyo-ku, 113-8656, Tokyo Japan. Email: \texttt{ryoshun\_oba@mist.i.u-tokyo.ac.jp}}}
\begin{document}
\maketitle
\begin{abstract}
Generalizing the strong Lefschetz property for an $\mathbb{N}$-graded algebra, we introduce the multigraded strong Lefschetz property for an $\mathbb{N}^m$-graded algebra.
We show that, for $\bm{a} \in \mathbb{N}^m_+$, the generic $\mathbb{N}^m$-graded Artinian reduction of the Stanley-Reisner ring of an $\bm{a}$-balanced homology sphere over a field of characteristic $2$ satisfies the multigraded strong Lefschetz property.
A corollary is the inequality $h_{\bm{b}} \leq h_{\bm{c}}$ for $\bm{b} \leq \bm{c} \leq \bm{a}-\bm{b}$ among the flag $h$-numbers of an $\bm{a}$-balanced simplicial sphere. This can be seen as a common generalization of the unimodality of the $h$-vector of a simplicial sphere by Adiprasito and the balanced generalized lower bound inequality by Juhnke-Kubitzke and Murai.
We further generalize these results to $\bm{a}$-balanced homology manifolds and $\bm{a}$-balanced simplicial cycles over a field of characteristic $2$.
\end{abstract}
\keywords{Lefschetz property, Stanley-Reisner ring, balancedness, multigraded algebra, unimodality}

\section{Introduction}
The face numbers of simplicial complexes have been extensively studied in algebraic and topological combinatorics in the last decades.
A recent breakthrough announced by Adiprasito~\cite{Adi} (see also~\cite{APP,KX,PP}) is the hard Lefschetz theorem for the Stanley-Reisner ring of a simplicial (or homology) sphere, generalizing the work of Stanley~\cite{Sta80} for the boundary complex of simplicial polytopes.
Among many combinatorial consequences, this implies the generalized lower bound inequality (GLBI) asserting that the $h$-vector of a simplicial sphere is unimodal (more generally this implies the celebrated $g$-conjecture).
The balanced GLBI of Juhnke-Kubitzke and Murai~\cite{JM} (together with the hard Lefschetz theorem) asserts that the $h$-vector of a simplicial $(d-1)$-sphere satisfies the stronger inequality 
\begin{equation} \label{eq:balancedGLBI}
\frac{h_i}{\binom{d}{i}} \leq \frac{h_{i+1}}{\binom{d}{i+1}} \text{ for } i < \frac{d}{2}
\end{equation}
if its $1$-skeleton is $d$-colorable.
To bridge between these two, we investigate the face numbers of $\bm{a}$-balanced simplicial complexes.

For a positive integer vector $\bm{a}=(a_1,\ldots,a_m)$ with $|\bm{a}|:=a_1+\cdots+a_m=d$, a pair $(\Delta,\kappa)$ of a $(d-1)$-dimensional simplicial complex $\Delta$ and a vertex coloring $\kappa$ of $\Delta$ into $m$ colors is called \emph{$\bm{a}$-balanced} if each face of $\Delta$ contains at most $a_j$ vertices of color $j$ for each $j=1,\ldots,m$.
Stanley~\cite{Sta79} initiated the study of $\bm{a}$-balanced simplicial complexes in connection with fine multigraded algebra structure for its Stanley-Reisner ring.
In particular, the Stanley-Reisner ring of an $\bm{a}$-balanced simplicial complex with $\bm{a} \in \mathbb{N}^m$ admits a system of parameters homogeneous in the fine $\mathbb{N}^m$-grading induced by the coloring~\cite{Sta79}. 

With this in mind, we introduce multigraded strong Lefschetz property for an $\mathbb{N}^m$-graded algebra. (We defer most of the definitions until the following sections.)
Let $\mathbbm{k}$ be a field and let $A=\bigoplus_{\bm{0} \leq \bm{b} \leq \bm{a}} A_{\bm{b}}$ be an Artinian Gorenstein standard $\mathbb{N}^m$-graded $\mathbbm{k}$-algebra with $A_{\bm{0}}\cong A_{\bm{a}} \cong \mathbbm{k}$.
Here $\bm{c} \leq \bm{d}$ denotes the component-wise inequality $c_i \leq d_i$ for all $i$, and an $\mathbb{N}^m$-graded algebra is \emph{standard} if it is generated by degree $1$ elements under the coarse $\mathbb{N}$-grading.
We say that $A$ has the \emph{multigraded strong Lefschetz property} (as an $\mathbb{N}^m$-graded algebra) if there is an element $\ell_j \in A_{\bm{e}_j}$ for each $j=1,\ldots,m$ such that the multiplication map
\[
\times\bm{\ell}^{\bm{a}-2\bm{b}} : A_{\bm{b}} \rightarrow A_{\bm{a}-\bm{b}}
\]
is an isomorphism for all $\bm{b} \in \mathbb{N}^m$ with $\bm{b} \leq \frac{\bm{a}}{2}$. Here $\bm{e}_j \in \mathbb{N}^m$ is the $j$-th unit coordinate vector, and we denote $\bm{t}^{\bm{c}}=\prod_{j=1}^m t_j^{c_j}$ for $\bm{t}=(t_1,\ldots,t_m)$ and $\bm{c}=(c_1,\ldots,c_m)$. The elements $\ell_j$ ($j=1,\ldots,m$) are called the \emph{Lefschetz elements}.
We prove the following\footnote{As we were writing up this paper, we notice a connection of Theorem~\ref{thm:main multiHL} and bipartite rigidity~\cite{KNN} including an application to Gr\"{u}nbaum-Kalai-Sarkaria type inequality for $\bm{a}$-balanced simplicial complexes.
See an upcoming paper for details.}.
\begin{theorem} \label{thm:main multiHL}
Let $\mathbbm{k}$ be a field of characteristic $0$ or $2$ and let $(\Delta,\kappa)$ be an $\bm{a}$-balanced homology sphere\footnote{By abuse of words, we often say that $(\Delta,\kappa)$ satisfies a certain property is $\Delta$ satisfies it.} over $\mathbb{F}_2$. 
Then the generic $\mathbb{N}^m$-graded Artinian reduction $A=\widetilde{\mathbbm{k}}[\Delta]/(\Theta)$ of the Stanley-Reisner ring $\mathbbm{k}[\Delta]$ has the multigraded strong Lefschetz property.
\end{theorem}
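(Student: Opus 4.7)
The plan is to adapt the characteristic-$2$ proof of the hard Lefschetz theorem for simplicial spheres (due to Papadakis-Petrotou and Karu-Xiao, via the volume polynomial and generic Artinian reductions) to the multigraded setting, proceeding by induction on the total dimension $d=|\bm{a}|$ with the links of vertices supplying the inductive step.

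First I would set up the multigraded framework. Because $(\Delta,\kappa)$ is $\bm{a}$-balanced, the variables of $\mathbbm{k}[\Delta]$ are graded by color and a generic linear system of parameters $\Theta$ may be chosen $\mathbb{N}^m$-homogeneously, with exactly $a_j$ generic linear combinations of the variables of color $j$. The quotient $A=\widetilde{\mathbbm{k}}[\Delta]/(\Theta)$ is then a standard $\mathbb{N}^m$-graded Artinian Gorenstein algebra with one-dimensional socle $A_{\bm{a}}$, so Poincar\'e duality gives $\dim A_{\bm{b}}=\dim A_{\bm{a}-\bm{b}}$. Fixing generic Lefschetz elements $\ell_j\in A_{\bm{e}_j}$, the multigraded SLP is equivalent to injectivity of each map $\times\bm{\ell}^{\bm{a}-2\bm{b}}\colon A_{\bm{b}}\to A_{\bm{a}-\bm{b}}$ with $\bm{b}\leq\bm{a}/2$, which in turn is equivalent to non-degeneracy of the symmetric bilinear form
\[
Q_{\bm{b}}(x,y) \;=\; \eta\bigl(xy\cdot\bm{\ell}^{\bm{a}-2\bm{b}}\bigr),\qquad x,y\in A_{\bm{b}},
\]
where $\eta\colon A_{\bm{a}}\to\mathbbm{k}$ is the socle isomorphism.

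Next I would run the induction on $d$. For each vertex $v$ of color $j$, the link $\lk_\Delta v$ is an $(\bm{a}-\bm{e}_j)$-balanced homology sphere of dimension $d-2$, and the standard link/antistar short exact sequence descends to Artinian reductions: the principal ideal $(x_v)\subseteq A$ carries a multigraded module structure isomorphic, up to an $\bm{e}_j$-shift in multi-degree, to a generic $\mathbb{N}^m$-graded Artinian reduction of $\mathbbm{k}[\lk_\Delta v]$, for which the inductive hypothesis supplies the multigraded SLP. Encoding $A$ via its Macaulay inverse system by the volume polynomial $\Psi_\Delta$ of multi-degree $\bm{a}$, the link volume polynomials appear as partial derivatives $\partial_v\Psi_\Delta$, and $Q_{\bm{b}}$ decomposes through these partials along the combinatorics of the stars of vertices.

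The main obstacle is the passage from non-degeneracy at every link back to non-degeneracy of $Q_{\bm{b}}$ on $A_{\bm{b}}$ itself. In the single-graded setting this is exactly the hard content of the Papadakis-Petrotou and Karu-Xiao arguments, handled by exhibiting $Q_{\bm{b}}$ as a sum of squares of differential operators acting on $\Psi_\Delta$, together with a Hall-Laman-type pairing identity. The multigraded refinement has to run color by color: because the Lefschetz elements $\ell_j$ of different colors appear in the prescribed exponents $\bm{a}-2\bm{b}$ and cannot be interchanged freely, a multigraded Hall-Laman relation keeping track of the full $\mathbb{N}^m$-grading is needed. Characteristic $2$ is essential here because the Frobenius is additive and the required sum-of-squares identities hold without sign corrections; once the characteristic-$2$ statement is proved, the characteristic-$0$ case follows by a standard semi-continuity argument on the ranks of the multiplication maps over the parameter space of choices of $\Theta$ and $\bm{\ell}$.
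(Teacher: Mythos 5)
Your proposal captures the right high-level philosophy (work in characteristic $2$, use the differential/volume-polynomial calculus, deduce anisotropy, then transport to characteristic $0$), but it sketches a genuinely different and, as written, incomplete architecture for the crucial step, and that step is precisely where the real content sits.

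The key gap is the paragraph beginning ``The main obstacle is the passage from non-degeneracy at every link back to non-degeneracy of $Q_{\bm{b}}$ on $A_{\bm{b}}$ itself.'' You correctly identify this as the hard part, but you do not resolve it: you appeal to an unstated ``multigraded Hall--Laman relation keeping track of the full $\mathbb{N}^m$-grading,'' and you propose an induction on $d$ via links of vertices. The paper does neither. The sphere case is handled by a \emph{direct, non-inductive} anisotropy argument that never decomposes $A$ along links. Its engine is a multigraded version of the Karu--Xiao differential identity
\[
\partial_I \Psi(x_J) \;=\; \Psi\!\bigl(\sqrt{x_I x_J}\bigr)^{2},
\]
valid for any $\kappa$-transversal sequence $I$ (i.e.\ a length-$d$ sequence $I=(v_1,\ldots,v_d)$ with $k\in\mathcal{I}_{\kappa(v_k)}$, so that the variables $p_{k,v_k}$ actually exist) and any length-$d$ monomial $x_J$. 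This is obtained from the $\mathbb{N}$-graded Karu--Xiao formula by a substitution trick that kills the unwanted indeterminates. The notion of $\kappa$-transversal sequence is exactly what makes the multigraded refinement go through, and it does not appear in your proposal. With this in hand, the paper proves anisotropy directly: given $g\ne 0$ in $A_{\bm{b}}$, Poincar\'e duality produces a monomial $x_K$ of degree $\bm{a}-\bm{b}$ with $gx_K\ne 0$; one factors $x_K^2 = x_I x_{U^*} x_J$ with $I$ $\kappa$-transversal and $U^*$ a transversal of the odd colors; expanding $\bm{\ell}^{\bm{e}_S}=\sum_{U\in V_S} x_U$ with the \emph{explicit} elements $\ell_j=\sum_{\kappa(v)=j} x_v$ (not generic ones, as you wrote), and applying the differential formula, exactly one summand survives and yields $\partial_I\Psi\bigl(g^2\bm{\ell}^{\bm{e}_S}x_J\bigr)=\Psi(gx_K)^2\ne0$. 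Iterating this weak-Lefschetz-with-anisotropy lemma reduces the power of the $\ell_j$ step by step and proves the full multigraded SLP; no link induction, no Hall--Laman identity, and no ``sum-of-squares'' expansion over stars is needed. Your link-based inductive strategy is the one the paper uses later \emph{for manifolds} (where it gives the weaker, surjectivity-only ``almost Lefschetz'' statement), but it is not what proves the sphere theorem, and as set up you would still need to produce the very identity you are deferring.

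A secondary imprecision: the characteristic-$0$ reduction is not a semi-continuity argument over a connected parameter space of $(\Theta,\bm{\ell})$, since one cannot deform from characteristic $2$ to characteristic $0$ that way. The paper instead observes that the relevant relations have coefficients in $\mathbb{Z}[p_{k,v}]$, picks a monomial basis of the generic Artinian reduction over $\widetilde{\mathbb{F}_2}$, and shows that a full-rank square minor of the corresponding integer-polynomial matrix is nonzero mod $2$ and hence nonzero over any field of characteristic $0$. Your intuition (rank cannot drop when passing to characteristic $0$) is correct, but the mechanism is the nonvanishing mod $2$ of a determinant with integer-polynomial entries, not openness of a rank condition in a single parameter family.
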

\noindent
Here $\widetilde{\mathbbm{k}}$ is a purely transcendental field extension of $\mathbbm{k}$ obtained in the generic $\mathbb{N}^m$-graded Artinian reduction (see Section 4 for details).
Note that Theorem~\ref{thm:main multiHL} is a common generalization of the hard Lefschetz theorem of a simplicial sphere~\cite{Adi,PP,KX} and the dual weak Lefschetz property for a rank-selected subcomplex of a completely balanced simplicial sphere~\cite[Theorem 3.3]{JM} with an assumption on the characteristic of the field.
We conjecture Theorem~\ref{thm:main multiHL} holds for any infinite field $\mathbbm{k}$ and a homology sphere $\Delta$ over $\mathbbm{k}$.
Our proof of Theorem~\ref{thm:main multiHL} is based on an anisotropy technique over characteristic $2$ used in~\cite{APP,APPS,KX,PP}.
As a corollary of Theorem~\ref{thm:main multiHL}, we obtain the following combinatorial consequence for the flag $h$-vector $(h_{\bm{b}})_{\bm{b}}$ of an $\bm{a}$-balanced homology sphere over $\mathbb{F}_2$. See also Corollary~\ref{cor:h-vec} for a combinatorial corollary for the $h$-vector.
\begin{theorem} \label{thm:flag-h}
For an $\bm{a}$-balanced homology sphere $(\Delta,\kappa)$ over $\mathbb{F}_2$, we have $h_{\bm{b}} \leq h_{\bm{c}}$ for any $\bm{b},\bm{c} \in \mathbb{N}^m$ with $\bm{b} \leq \bm{c} \leq \bm{a}-\bm{b}$.
\end{theorem}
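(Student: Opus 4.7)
The plan is to derive Theorem~\ref{thm:flag-h} directly from Theorem~\ref{thm:main multiHL} by the standard argument that factors a Lefschetz isomorphism through an intermediate degree. First I would record that for the generic $\mathbb{N}^m$-graded Artinian reduction $A=\widetilde{\mathbbm{k}}[\Delta]/(\Theta)$ of a $(d{-}1)$-dimensional $\bm{a}$-balanced simplicial complex, the fine Hilbert function equals the flag $h$-vector, i.e.\ $\dim_{\widetilde{\mathbbm{k}}} A_{\bm{b}} = h_{\bm{b}}$ for every $\bm{0}\leq\bm{b}\leq\bm{a}$. This is a standard consequence of Stanley's theory of $\bm{a}$-balanced Stanley--Reisner rings and the fact that $\Theta$ is a homogeneous system of parameters in the fine grading, and I would either cite or briefly re-derive it from the $\mathbb{N}^m$-graded Hilbert series of $\mathbbm{k}[\Delta]$.

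Next I would verify the key inequality $\bm{b}\leq \tfrac{\bm{a}}{2}$: from the hypothesis $\bm{b}\leq \bm{c}\leq \bm{a}-\bm{b}$ we get $2\bm{b}\leq \bm{b}+\bm{c}\leq \bm{a}$ componentwise, so Theorem~\ref{thm:main multiHL} provides Lefschetz elements $\ell_1,\dots,\ell_m$ for which the map
\[
\times\bm{\ell}^{\bm{a}-2\bm{b}}\colon A_{\bm{b}}\longrightarrow A_{\bm{a}-\bm{b}}
\]
is an isomorphism.

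The main step is then the factorization: since $\bm{c}-\bm{b}\geq \bm{0}$ and $(\bm{a}-\bm{b})-\bm{c}\geq \bm{0}$ with $(\bm{c}-\bm{b})+(\bm{a}-\bm{b}-\bm{c})=\bm{a}-2\bm{b}$, we can write
\[
\bigl(\times \bm{\ell}^{\bm{a}-2\bm{b}}\bigr)
=\bigl(\times \bm{\ell}^{\bm{a}-\bm{b}-\bm{c}}\bigr)\circ\bigl(\times \bm{\ell}^{\bm{c}-\bm{b}}\bigr),
\]
i.e.\ the isomorphism factors through the intermediate multiplication $\times \bm{\ell}^{\bm{c}-\bm{b}}\colon A_{\bm{b}}\to A_{\bm{c}}$. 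Since the composition is an isomorphism, this first arrow must be injective, and hence $h_{\bm{b}}=\dim_{\widetilde{\mathbbm{k}}} A_{\bm{b}}\leq \dim_{\widetilde{\mathbbm{k}}} A_{\bm{c}}=h_{\bm{c}}$.

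There is no substantive obstacle: all heavy lifting has already been done in Theorem~\ref{thm:main multiHL}, and the combinatorial conclusion is essentially a bookkeeping exercise. The only point worth emphasizing carefully is the identification of the fine Hilbert function of $A$ with the flag $h$-vector of $\Delta$, so that the algebraic inequality $\dim A_{\bm{b}}\leq \dim A_{\bm{c}}$ translates to the stated combinatorial inequality $h_{\bm{b}}\leq h_{\bm{c}}$.
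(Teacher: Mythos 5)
Your argument is exactly the paper's proof: factor $\times\bm{\ell}^{\bm{a}-2\bm{b}}$ as $(\times\bm{\ell}^{\bm{a}-\bm{b}-\bm{c}})\circ(\times\bm{\ell}^{\bm{c}-\bm{b}})$, conclude injectivity of the first factor, and use $\dim A_{\bm{b}}=h_{\bm{b}}$ (which the paper records in its preliminaries for $\bm{a}$-balanced Cohen--Macaulay complexes). Correct, and same route.
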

\noindent Note that Theorem~\ref{thm:flag-h} can be seen as a common generalization of GLBI and balanced GLBI. (GLBI is the case of $m=1$ in Theorem~\ref{thm:flag-h}. Balanced GLBI (\ref{eq:balancedGLBI}) follows from the inequality  $h_{i\bm{e}_j}\leq h_{(i+1)\bm{e}_j}$ for $\bm{a}=\bm{1}+2i\bm{e}_j$ together with the averaging argument of Goff, Klee and Novik~\cite{GKN}. See~\cite{Adi17,JM} for details.)

We further generalize the almost strong Lefschetz property of manifolds~\cite[Section 8]{AY} and the strong Lefschetz property of simplicial cycles (after Gorensteinification)~\cite[Theorem I]{APP} and the top-heavy strong Lefschetz property for doubly Cohen-Macaulay complexes~\cite[Corollary 3.2]{APP} to $\bm{a}$-balanced setting.
As a combinatorial corollary, we obtain the following generalization of Theorem~\ref{thm:flag-h} to manifolds (without boundary) concerning the flag $h''$-vector $(h''_{\bm{b}})_{\bm{b}}$. See also~Corollary~\ref{cor:mfd-h}. Note that over a field of characteristic $2$, every homology manifold is orientable.
\begin{theorem} \label{thm:manifold flag_h}
For $\bm{a} \in \mathbb{N}^m_+$, let $(\Delta,\kappa)$ be an $\bm{a}$-balanced connected homology manifold over $\mathbb{F}_2$.
Let $\bm{b}, \bm{c} \in \mathbb{N}^m$ be integer vectors with $\bm{b} \lneq \bm{c} \lneq \bm{a}-\bm{b}$\footnote{$\bm{b} \lneq \bm{c}$ stands for $\bm{b} \leq \bm{c}$ and $\bm{b} \neq \bm{c}$.}.
Then we have $h''_{\bm{c}} \geq h''_{\bm{b}}+ \binom{\bm{a}}{\bm{b}}\widetilde{\beta}_{|\bm{b}|}$, where $\widetilde{\beta}_i$ is the $i$-th reduced betti number of $\Delta$ over $\mathbb{F}_2$, and $\binom{\bm{a}}{\bm{b}}=\prod_{j=1}^m \binom{a_j}{b_j}$.
\end{theorem}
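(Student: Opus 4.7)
The plan is to invoke the multigraded almost strong Lefschetz property for $\bm{a}$-balanced connected homology manifolds over $\mathbb{F}_2$ (established earlier in the paper as an extension of Theorem~\ref{thm:main multiHL} along the lines of~\cite[Section~8]{AY}), and then carry out a socle-level kernel count. Let $A' = \widetilde{\mathbbm{k}}[\Delta]/(\Theta)$ denote the generic $\mathbb{N}^m$-graded Artinian reduction. The almost-SLP furnishes Lefschetz elements $\ell_j \in A'_{\bm{e}_j}$ and an $\mathbb{N}^m$-graded non-top socle ideal $\mathcal{I} \subseteq A'$ such that $A := A'/\mathcal{I}$ is Artinian Gorenstein with multigraded SLP and $\dim A_{\bm{b}} = h''_{\bm{b}}$. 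The multigraded Novik--Swartz socle formula reads $\dim \mathcal{I}_{\bm{b}} = \binom{\bm{a}}{\bm{b}} \widetilde{\beta}_{|\bm{b}|-1}$ for $\bm{0} \lneq \bm{b} \lneq \bm{a}$. From $\bm{b} \lneq \bm{c} \lneq \bm{a}-\bm{b}$ one has $\bm{b} \leq \bm{a}-\bm{c} \leq \bm{a}-\bm{b}$ and $2\bm{b} \lneq \bm{a}$; factoring the SLP isomorphism $\bm{\ell}^{\bm{a}-2\bm{b}}\colon A_{\bm{b}} \xrightarrow{\sim} A_{\bm{a}-\bm{b}}$ through $A_{\bm{a}-\bm{c}}$ yields a surjection $\psi := \bm{\ell}^{\bm{c}-\bm{b}}\colon A_{\bm{a}-\bm{c}} \twoheadrightarrow A_{\bm{a}-\bm{b}}$, and Gorenstein palindromy $h''_{\bm{a}-\bm{c}} = h''_{\bm{c}}$, $h''_{\bm{a}-\bm{b}} = h''_{\bm{b}}$ identifies $\dim \ker \psi = h''_{\bm{c}} - h''_{\bm{b}}$. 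The inequality thus reduces to $\dim \ker \psi \geq \binom{\bm{a}}{\bm{b}} \widetilde{\beta}_{|\bm{b}|}$.

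Over a field of characteristic $2$ every homology manifold is orientable, so Poincar\'e duality gives $\widetilde{\beta}_{|\bm{b}|} = \widetilde{\beta}_{d-1-|\bm{b}|} = \widetilde{\beta}_{|\bm{a}-\bm{b}|-1}$ (the range $0 < |\bm{b}| < d-1$ is automatic from $\bm{0} \lneq \bm{b}$ and $2\bm{b} \lneq \bm{a}$), whence $\binom{\bm{a}}{\bm{b}} \widetilde{\beta}_{|\bm{b}|} = \dim \mathcal{I}_{\bm{a}-\bm{b}}$. Writing $\pi\colon A' \twoheadrightarrow A$ for the quotient, the next step is to transplant the socle $\mathcal{I}_{\bm{a}-\bm{b}}$ into $\ker \psi$. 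Granted the key inclusion $\mathcal{I}_{\bm{a}-\bm{b}} \subseteq \bm{\ell}^{\bm{c}-\bm{b}} \cdot A'_{\bm{a}-\bm{c}}$, for each basis element $s_i$ of $\mathcal{I}_{\bm{a}-\bm{b}}$ I would pick a lift $\tilde{x}_i \in A'_{\bm{a}-\bm{c}}$ with $\bm{\ell}^{\bm{c}-\bm{b}} \tilde{x}_i = s_i$ and set $x_i := \pi(\tilde{x}_i)$. Then $\psi(x_i) = \pi(s_i) = 0$, so $x_i \in \ker \psi$; and the $x_i$ are linearly independent in $A_{\bm{a}-\bm{c}}$, because any relation $\sum_i c_i x_i = 0$ forces $\sum_i c_i \tilde{x}_i \in \mathcal{I}_{\bm{a}-\bm{c}}$, which is annihilated by the maximal ideal of $A'$, so $\sum_i c_i s_i = \bm{\ell}^{\bm{c}-\bm{b}} \sum_i c_i \tilde{x}_i = 0$ and all $c_i$ vanish.

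The main obstacle is the key inclusion $\mathcal{I}_{\bm{a}-\bm{b}} \subseteq \bm{\ell}^{\bm{c}-\bm{b}} A'_{\bm{a}-\bm{c}}$: every socle element at degree $\bm{a}-\bm{b}$ must be realized as $\bm{\ell}^{\bm{c}-\bm{b}} \tilde{x}$ for some $\tilde{x} \in A'_{\bm{a}-\bm{c}}$. I would expect this to be built into (or immediately implied by) the multigraded almost-SLP for manifolds: for generic Lefschetz elements the map $\bm{\ell}^{\bm{c}-\bm{b}}\colon A'_{\bm{a}-\bm{c}} \to A'_{\bm{a}-\bm{b}}$ should achieve the maximum rank compatible with the socle skeleton of $A'$. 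A clean sufficient form is \emph{surjectivity} of this map, which, combined with the automatic containment $\mathcal{I}_{\bm{a}-\bm{c}} \subseteq \ker$ (socle is annihilated by any multiplication), yields the key inclusion at once. This is the heart of the argument, and the only place where the manifold hypothesis (beyond sphere) enters, through the non-trivial socle at intermediate degrees.
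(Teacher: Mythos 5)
Your argument is essentially correct and takes a genuinely different route from the paper's. The paper proves the inequality directly in the generic reduction $A' := \widetilde{\mathbbm{k}}[\Delta]/(\Theta)$: from Theorem~\ref{thm:manifold almost HL} it deduces that $\times\bm{\ell}^{\bm{a}-\bm{b}-\bm{c}}\colon A'_{\bm{c}}\to A'_{\bm{a}-\bm{b}}$ is surjective, notes $\Soc(A')_{\bm{c}}\subseteq\ker$ because $\bm{c}\lneq\bm{a}-\bm{b}$, concludes $h''_{\bm{c}}\geq h'_{\bm{a}-\bm{b}}$, and converts $h'_{\bm{a}-\bm{b}}$ to $h''_{\bm{b}}+\binom{\bm{a}}{\bm{b}}\widetilde{\beta}_{|\bm{b}|}$ via Dehn--Sommerville and Poincar\'e duality for Betti numbers. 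You instead pass to the Gorenstein quotient $A=A'/\Soc^{\circ}$, work with the ``reflected'' map $\psi=\bm{\ell}^{\bm{c}-\bm{b}}\colon A_{\bm{a}-\bm{c}}\to A_{\bm{a}-\bm{b}}$, identify $\dim\ker\psi$ via palindromy, and lower-bound it by lifting a basis of $\Soc^{\circ}_{\bm{a}-\bm{b}}$ along your ``key inclusion''; the independence argument via annihilation of the socle is clean and correct. The two proofs use the same three ingredients (Theorem~\ref{thm:manifold almost HL}, Lemma~\ref{lem:flag h', h''}, Poincar\'e duality) but on opposite sides: the paper counts socle at degree $\bm{c}$ while you transplant socle from degree $\bm{a}-\bm{b}$, so yours is a dual reading of the same kernel count.

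One over-claim should be flagged. You assert that the quotient $A=A'/\Soc^{\circ}$ itself satisfies the \emph{multigraded strong Lefschetz property}; the paper does not establish this. Theorem~\ref{thm:manifold almost HL} only gives surjectivity of $\times\bm{\ell}^{\bm{a}-2\bm{b}'-\bm{e}_j}\colon A'_{\bm{b}'+\bm{e}_j}\to A'_{\bm{a}-\bm{b}'}$, which is a genuine weakening (the ``almost'' SLP). Fortunately your argument only ever uses surjectivity of $\psi$, which does not require the full SLP: it descends from surjectivity of $\psi'=\bm{\ell}^{\bm{c}-\bm{b}}\colon A'_{\bm{a}-\bm{c}}\to A'_{\bm{a}-\bm{b}}$, which in turn is your ``key inclusion''. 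To close that step, choose $j$ with $\bm{b}+\bm{e}_j\leq\bm{a}-\bm{c}$ (possible since $\bm{b}\lneq\bm{a}-\bm{c}$); then $\bm{b}\leq\bm{c}$ and $\bm{b}+\bm{e}_j\leq\bm{a}-\bm{c}$ together give $\bm{b}\leq\frac{\bm{a}-\bm{e}_j}{2}$, so Theorem~\ref{thm:manifold almost HL} makes $A'_{\bm{b}+\bm{e}_j}\to A'_{\bm{a}-\bm{b}}$ surjective, and factoring this through $A'_{\bm{a}-\bm{c}}$ yields surjectivity of $\psi'$. This is exactly how the paper deduces surjectivity of $A'_{\bm{c}}\to A'_{\bm{a}-\bm{b}}$, only on the other side of the palindrome. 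With that replacement your proof is complete, but as written the appeal to a full multigraded SLP for the manifold quotient is unsupported by anything earlier in the paper (the SLP for Gorensteinifications $B(\mu)$ appears only later, in Theorem~\ref{thm:simplicial cycle}, and only for characteristic $2$).
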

\noindent Theorem~\ref{thm:manifold flag_h} can be seen as a common generalization of manifold GLBI~\cite{NS} and balanced manifold GLBI~\cite{JMNS}.
A combinatorial corollary for doubly Cohen-Macaulay complexes is the following generalization of Theorem~\ref{thm:flag-h}.
\begin{theorem} \label{thm:2CM_flag h}
An $\bm{a}$-balanced doubly Cohen-Macaulay complexes $(\Delta,\kappa)$ over $\mathbb{F}_2$ satisfies the same inequality as in Theorem~\ref{thm:flag-h}.
\end{theorem}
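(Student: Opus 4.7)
The plan is to first establish a \emph{multigraded top-heavy strong Lefschetz property} for $\bm{a}$-balanced doubly Cohen-Macaulay complexes, and then deduce the inequality $h_{\bm{b}} \leq h_{\bm{c}}$ by a factorization of multiplication maps, mirroring the path from Theorem~\ref{thm:main multiHL} to Theorem~\ref{thm:flag-h} in the sphere case.

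First I would prove the following Lefschetz-type statement: for an $\bm{a}$-balanced doubly Cohen-Macaulay complex $(\Delta,\kappa)$ over $\mathbb{F}_2$, the generic $\mathbb{N}^m$-graded Artinian reduction $A=\widetilde{\mathbbm{k}}[\Delta]/(\Theta)$ admits elements $\ell_j \in A_{\bm{e}_j}$ (for $j=1,\ldots,m$) such that
\[
\times \bm{\ell}^{\bm{a}-2\bm{b}} : A_{\bm{b}} \to A_{\bm{a}-\bm{b}}
\]
is \emph{injective} for every $\bm{b} \in \mathbb{N}^m$ with $\bm{b} \leq \bm{a}/2$. My approach would be to embed $\Delta$ into an $\bm{a}'$-balanced homology sphere $\Sigma$ via a multigraded version of the Gorensteinification used in \cite[Corollary 3.2]{APP}: add one colored apex vertex per facet (choosing colors compatibly so the resulting complex remains $\bm{a}'$-balanced for some $\bm{a}' \geq \bm{a}$), verify that the inclusion $\Delta \subset \Sigma$ induces a surjection of the corresponding multigraded Artinian reductions, and apply Theorem~\ref{thm:main multiHL} to $\Sigma$. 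The injectivity on the $\Delta$-side then descends from the isomorphism on the $\Sigma$-side via the top-heavy pairing. As an alternative, one could replace this embedding argument with a direct anisotropy argument for the quadratic form $x \mapsto \bm{\ell}^{\bm{a}-2\bm{b}} \cdot x^2$ on $A_{\bm{b}}$ in characteristic $2$, leveraging that the socle of $A$ is concentrated in degree $\bm{a}$ under the $2$-Cohen-Macaulay assumption.

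Given the top-heavy multigraded SLP, deducing Theorem~\ref{thm:2CM_flag h} is straightforward. For any $\bm{b} \leq \bm{c} \leq \bm{a}-\bm{b}$, factor the injective map $\times \bm{\ell}^{\bm{a}-2\bm{b}}$ as
\[
A_{\bm{b}} \xrightarrow{\;\times \bm{\ell}^{\bm{c}-\bm{b}}\;} A_{\bm{c}} \xrightarrow{\;\times \bm{\ell}^{\bm{a}-\bm{b}-\bm{c}}\;} A_{\bm{a}-\bm{b}}.
\]
Injectivity of the composition forces injectivity of the first factor, whence $\dim_{\widetilde{\mathbbm{k}}} A_{\bm{b}} \leq \dim_{\widetilde{\mathbbm{k}}} A_{\bm{c}}$. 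Since flag $h$-numbers are precisely the $\mathbb{N}^m$-graded dimensions of $A$, this gives $h_{\bm{b}} \leq h_{\bm{c}}$.

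The main obstacle is the Gorensteinification step in Step~1: unlike the ungraded setting of \cite{APP}, the apex vertices introduced to close $\Delta$ into a homology sphere must be assigned colors that are compatible with the $\bm{a}$-balanced structure while not inflating the combinatorial type in ways that destroy the multigraded control. Handling this likely requires enlarging the color vector to some $\bm{a}'$ (with extra singleton color classes) and carefully checking that the Lefschetz elements of the enlarged ring can be restricted to produce Lefschetz elements of $A$ in the prescribed multidegrees $\bm{e}_j$. If a clean Gorensteinification is unavailable in the multigraded setting, the fallback is to run the anisotropy machinery of \cite{PP,KX} directly on the $2$-Cohen-Macaulay complex, adapting the characteristic-$2$ Frobenius-squaring argument to the $\mathbb{N}^m$-graded generic parameters introduced in Theorem~\ref{thm:main multiHL}.
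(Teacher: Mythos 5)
Your overall plan---prove a multigraded top-heavy strong Lefschetz property for $\bm{a}$-balanced $2$-CM complexes and then factor the injective map $\times\bm{\ell}^{\bm{a}-2\bm{b}}$ through $A_{\bm{c}}$---is precisely the paper's structure, and your Step~2 deduction matches the paper's proof of Theorem~\ref{thm:2CM_flag h} verbatim.

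However, there is a genuine gap in your Step~1. Your primary route proposes to embed $\Delta$ into an $\bm{a}'$-balanced homology sphere $\Sigma$ by adding colored apex vertices per facet, but this misreads what Gorensteinification is, and the approach cannot work in general: a $2$-CM complex need not be a pseudomanifold (a $(d-2)$-face can lie in more than two facets), and it typically has several independent top homology classes, so no such simplicial embedding exists. For instance $K_4$, viewed as a $1$-dimensional complex, is $2$-CM with $\dim\widetilde{H}_1=3$ and clearly does not embed into a circle. The Gorensteinification in \cite[Corollary 3.2]{APP} is an algebraic construction: for each simplicial cycle $\mu\in\widetilde{H}_{d-1}(\Delta;\widetilde{\mathbbm{k}})$ one forms an Artinian Gorenstein quotient $B(\mu)$ of the polynomial ring via the annihilator of the degree map $\Psi_\mu$; no new simplicial complex is produced. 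Your fallback---anisotropy for $g\mapsto\Psi(g^2\bm{\ell}^{\bm{a}-2\bm{b}})$ directly on $A$---is also incomplete, because $A$ itself is not Gorenstein: $A_{\bm{a}}$ is generally more than one-dimensional and there is no single evaluation map $\Psi$ on $A$.

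What you are missing is the intermediate simplicial-cycle result (the paper's Theorem~\ref{thm:simplicial cycle}): for \emph{each} $\mu$ in a basis $\mu_1,\ldots,\mu_k$ of $\widetilde{H}_{d-1}(\Delta;\widetilde{\mathbbm{k}})$, the Gorensteinification $B(\mu)$ has the multigraded SLP, proved by noting that $\Psi_\mu$ is a weighted sum of evaluation maps of simplex boundaries and hence still obeys the characteristic-$2$ differential formula. Then the level-ring property $\Soc(A)=A_{\bm{a}}$ (which you did correctly identify as the role of $2$-CM-ness) gives an $\mathbb{N}^m$-graded \emph{injection} $A\hookrightarrow\bigoplus_{i=1}^k B(\mu_i)$, and injectivity of $\times\bm{\ell}^{\bm{a}-2\bm{b}}$ on $A_{\bm{b}}$ descends from the isomorphism on $\bigoplus_i B(\mu_i)_{\bm{b}}$. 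The passage to the direct sum over independent top cycles is the piece that replaces your "embed into a sphere" step and makes the argument go through.
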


Cook, Juhnke-Kubitzke, Murai and Nevo~\cite{CJMN} investigated whether an $\mathbb{N}^m$-graded Artinian reduction of the Stanley-Reisner ring of an $\bm{a}$-balanced simplicial sphere with $\bm{a} \in \mathbb{N}^m_+$ satisfies weak/strong Lefschetz property as an $\mathbb{N}$-graded algebra.
We show in Theorem~\ref{thm:fullrank at ends} that, in the generic $\mathbb{N}^m$-graded Artinian reduction of an $\bm{a}$-balanced simplicial sphere, the multiplication of a generic linear element is full-rank ``at ends''.
On the other hand, generalizing counterexamples given in~\cite{CJMN,Oba}, for any positive integer $i,d$ with $i<\frac{d}{2}$, we construct $(d-i,i)$-balanced simplicial spheres whose $\mathbb{N}^2$-graded Artinian reduction fails to have the weak Lefschetz property as an $\mathbb{N}$-graded algebra (Theorem~\ref{thm:counterexample}).

The paper is organized as follows.
After preliminaries are given in Section 2, Lee's formula for the evaluation map is recalled in Section 3.
In Section 4, generic $\mathbb{N}^m$-graded Artinian reduction is defined, and differential formula for the evaluation map in multigraded setting is derived.
Theorem~\ref{thm:main multiHL} for characteristic $2$ field is proved via anisotropy in Section 5, and the proof of Theorem~\ref{thm:main multiHL} for characteristic $0$ field is given in Section 6.
In Section 7, generalizations to manifold, simplicial cycles and doubly Cohen-Macaulay complexes are discussed.
In Section 8, Lefschetz property as an $\mathbb{N}$-graded algebra is discussed.

\section{Preliminaries}
We highlight some definitions and notations we use (see \cite{Sta-book} for general reference).
\subsection{Simplicial Complex}
Throughout, by a simplicial complex, we always mean an abstract simplicial complex, i.e., a downward closed collection of subsets of a finite set.
The vertex set of a simplicial complex $\Delta$ is denoted by $V(\Delta)$.
For a $(d-1)$-dimensional simplicial complex $\Delta$, the \emph{$f$-vector} of $\Delta$ is an integer vector $f(\Delta)=(f_{-1},\ldots,f_{d-1})$, where $f_i$ is the number of $i$-dimensional faces of $\Delta$, and the \emph{$h$-vector} of $\Delta$ is an integer vector $h(\Delta)=(h_0,\ldots,h_d)$ defined by 
\[
h_i = \sum_{j=0}^i (-1)^{i-j} \binom{d-j}{i-j} f_{j-1} \qquad \text{ for $i=0,\ldots,d$}.
\]

We denote the set of nonnegative (resp.~positive) integers by $\mathbb{N}$ (resp.~$\mathbb{N}_+$).
For $\bm{a}=(a_1,\ldots,a_m) \in \mathbb{N}^m$, we denote $|\bm{a}|=a_1+\cdots+a_m$.
Recall that for $\bm{a}=(a_1,\ldots,a_m) \in \mathbb{N}^m_+$ with $|\bm{a}|=d$, a pair $(\Delta,\kappa)$ of a $(d-1)$-dimensional simplicial complex $\Delta$ and a map $\kappa:V(\Delta)\rightarrow [m]=\{1,\ldots,m\}$ is $\bm{a}$-balanced if $|\tau \cap \kappa^{-1}(j)| \leq a_j$ holds for any $\tau \in \Delta$ and $j \in [m]$. Such a map $\kappa$ is called a \emph{coloring} of $\Delta$.
The pair $(\Delta,\kappa)$ is called an $\bm{a}$-balanced simplicial complex (though it is a pair), and we often say that $(\Delta,\kappa)$ satisfies a certain property if $\Delta$ satisfies it.
Note that a $(d-1)$-dimensional simplicial complex is $(1,\ldots,1)$-balanced if its $1$-skeleton is $d$-colorable (such a case is also called \emph{completely balanced} or just \emph{balanced} in the literature). Note also that any $(d-1)$-dimensional simplicial complex is $(d)$-balanced with the monochromatic coloring.
Let $(\Delta,\kappa)$ be an $\bm{a}$-balanced simplicial complex with $\bm{a} \in \mathbb{N}_+^m$.
The \emph{flag $f$-vector} of $(\Delta,\kappa)$ is an $m$-dimensional array $(f_{\bm{b}})_{\bm{0}\leq\bm{b}\leq\bm{a}}$ where $f_{\bm{b}}$ is the number of faces $\sigma \in \Delta$ with $|\sigma \cap \kappa^{-1}(j)|=b_j$ for $j=1,\ldots,m$. 
The \emph{flag $h$-vector} of $(\Delta,\kappa)$ is an $m$-dimensional array $(h_{\bm{b}})_{\bm{0}\leq\bm{b}\leq\bm{a}}$ defined by
\[
h_{\bm{b}}=\sum_{\bm{0} \leq \bm{c}\leq\bm{b}}(-1)^{|\bm{b}|-|\bm{c}|}\binom{\bm{a}-\bm{c}}{\bm{b}-\bm{c}}f_{\bm{c}} \qquad \text{ for all $\bm{b} \in \mathbb{N}^m$ with $\bm{b} \leq \bm{a}$},
\]
where $\binom{\bm{c}}{\bm{b}}=\prod_{j=1}^m \binom{c_j}{b_j}$.
These vectors refine the usual $f$- and $h$-vector in the sense that $f_{i-1}=\sum_{\bm{b} \leq \bm{a}, |\bm{b}|=i} f_{\bm{b}}$ and $h_{i}=\sum_{\bm{b} \leq \bm{a}, |\bm{b}|=i} h_{\bm{b}}$ hold for $i=0,\ldots,d$.

\subsection{Stanley-Reisner ring}
For an $\mathbb{N}^m$-graded module $M$ and $\bm{b} \in \mathbb{N}^m$, we denote by $M_{\bm{b}}$ the submodule of all homogeneous elements of degree $\bm{b}$.

Let $\mathbbm{k}$ be a field and let $\Delta$ be a simplicial complex. Let us denote by $\mathbbm{k}[\bm{x}]$ the polynomial ring $\mathbbm{k}[x_v:v \in V(\Delta)]$.
The Stanley-Reisner ring of $\Delta$ over $\mathbbm{k}$ is  $\mathbbm{k}[\Delta]=\mathbbm{k}[\bm{x}]/I_\Delta$, where $I_\Delta$ is the ideal generated by $x_\tau=\prod_{v \in \tau} x_v$ over all $\tau \not\in \Delta$.
It is known that the Stanley-Reisner ring of $\Delta$ has Krull dimension $\dim \Delta+1$.
For a $(d-1)$-dimensional simplicial complex $\Delta$, a length $d$ sequence of linear forms $\Theta=(\theta_1,\ldots,\theta_d)$ of $\mathbbm{k}[\Delta]$ is called a \emph{linear system of parameters} (\emph{l.s.o.p.}~for short) for $\mathbbm{k}[\Delta]$ if $\mathbbm{k}[\Delta]/(\Theta)=\mathbbm{k}[\Delta]/(\theta_1,\ldots,\theta_d)$ is a finite dimensional $\mathbbm{k}$-vector space.
The resulting quotient algebra $\mathbbm{k}[\Delta]/(\Theta)$ is called an \emph{Artinian reduction} of $\mathbbm{k}[\Delta]$ with respect to $\Theta$, and we usually denote it as $A(\Delta)$ or simply as $A$.
It is known that if $\mathbbm{k}$ is an infinite field, $\mathbbm{k}[\Delta]$ always has an l.s.o.p.

For an $\bm{a}$-balanced simplicial complex $(\Delta,\kappa)$ with $\bm{a} \in \mathbb{N}^m$, the polynomial ring $\mathbbm{k}[\bm{x}]$ naturally has the $\mathbb{N}^m$-grading, sometimes called the \emph{fine grading}, defined by $\deg x_v = \bm{e}_{\kappa(v)}$, where $\bm{e}_j \in \mathbb{N}^m$ denotes the $j$-th unit coordinate vector.
For an $\bm{a}$-balanced simplicial complex $(\Delta,\kappa)$, we say that a system of parameters $\Theta$ for $\mathbbm{k}[\Delta]$ is \emph{$\mathbb{N}^m$-graded} (or \emph{$\mathbb{N}^m$-homogeneous} or \emph{$\bm{a}$-colored}) if each $\theta_i$ is homogeneous in the fine $\mathbb{N}^m$-grading of $\mathbbm{k}[\Delta]$.
Stanley~\cite[Theorem 4.1]{Sta79} showed that if $\mathbbm{k}$ is an infinite field, every $\bm{a}$-balanced simplicial complex $(\Delta,\kappa)$ has an $\mathbb{N}^m$-graded l.s.o.p.~$\Theta$ for $\mathbbm{k}[\Delta]$, and $(\mathbbm{k}[\Delta]/(\Theta))_{\bm{b}}=0$ unless $\bm{0} \leq \bm{b} \leq \bm{a}$.
One can easily check that for an $\mathbb{N}^m$-graded l.s.o.p.~$\Theta$ for the Stanley-Reisner ring of an $\bm{a}$-balanced simplicial complex, $\Theta$ contains exactly $a_j$ elements of degree $\bm{e}_j$ for each $j$.

\subsection{Homological properties}
The \emph{link} of a face $\tau \in \Delta$ is $\lk_\tau(\Delta)=\{\sigma \in \Delta:\sigma \cap \tau = \emptyset, \sigma \cup \tau \in \Delta\}$. The (closed) \emph{star} of a face $\tau \in \Delta$ is $\st_\tau(\Delta)=\{\sigma \in \Delta: \sigma \cup \tau \in \Delta\}$.
For $W \subseteq V(\Delta)$, let $\Delta -W=\{\tau \in \Delta:\tau \not\supseteq W\}$.

A simplicial complex $\Delta$ is called \emph{Cohen-Mcaulay} over $\mathbbm{k}$ if there is an l.s.o.p.~$(\theta_1,\ldots,\theta_d)$ for $\mathbbm{k}[\Delta]$ such that $\mathbbm{k}[\Delta]$ is a free $\mathbbm{k}[\theta_1,\ldots,\theta_d]$-module.
By Reisner's theorem, a simplicial complex $\Delta$ is Cohen-Macaulay over $\mathbbm{k}$ if and only if it is pure and, for every face $\sigma \in \Delta$, $\widetilde{H}_i(\lk_\sigma(\Delta);\mathbbm{k})=0$ for all $i \neq \dim\Delta - |\sigma|$ (see~\cite[Corollary II.4.2]{Sta-book}). 
Here $\widetilde{H}_*(\Delta;\mathbbm{k})$ denotes the reduced simplicial homology group of $\Delta$ with coefficients in $\mathbbm{k}$.
Note that for an $\bm{a}$-balanced Cohen-Macaulay complex $(\Delta,\kappa)$, the equality $\dim (\mathbbm{k}[\Delta]/(\Theta))_{\bm{b}} = h_{\bm{b}}$ holds for $\bm{0} \leq \bm{b} \leq \bm{a}$.

For an $\mathbb{N}$-graded $\mathbbm{k}[\bm{x}]$-module $M$, its \emph{socle} is the submodule $\Soc(M)=\{a \in M: \mathfrak{m}a=0\}$, where $\mathfrak{m}=(x_1,\ldots,x_n)$ is the maximal graded ideal of $\mathbbm{k}[\bm{x}]$.
An $\mathbb{N}$-graded $\mathbbm{k}$-algebra of Krull dimension zero is said to be \emph{Gorenstein} if its socle is a one-dimensional $\mathbbm{k}$-vector space.
Note that an $\mathbb{N}$-graded finitely generated standard $\mathbbm{k}$-algebra $A=A_0\oplus \cdots \oplus A_d$ with $A_d \neq 0$ is Gorenstein if and only if $\dim A_d =1$ and the multiplication map $A_i \times A_{d-i} \rightarrow A_d \overset{\cong}{\rightarrow} \mathbbm{k}$ is a nondegenerate bilinear pairing for $i=0,\ldots,d$~\cite[Lemma 36]{AY}.

We say that a $(d-1)$-dimensional simplicial complex $\Delta$ is a \emph{simplicial $(d-1)$-sphere} if its geometric realization is homeomorphic to $\mathbb{S}^{d-1}$. A $(d-1)$-dimensional simplicial complex $\Delta$ is a \emph{homology $(d-1)$-sphere} over $\mathbbm{k}$ if $\widetilde{H}_*(\lk_{\tau}\Delta;\mathbbm{k}) \cong \widetilde{H}_*(\mathbb{S}^{d-|\tau|-1};\mathbbm{k})$ for every face $\tau \in \Delta$.
If $\Delta$ is a homology sphere over $\mathbbm{k}$, an Artinian reduction $A=\mathbbm{k}[\Delta]/(\Theta)$ is Gorenstein~\cite[Theorem II.5.1]{Sta-book}.
A simplicial complex $\Delta$ is a \emph{homology $(d-1)$-manifold} over $\mathbbm{k}$ if $\widetilde{H}_*(\lk_{\tau}\Delta;\mathbbm{k}) \cong \widetilde{H}_*(\mathbb{S}^{d-|\tau|-1};\mathbbm{k})$ for every \emph{nonempty} face $\tau \in \Delta$.

A pure $(d-1)$-dimensional simplicial complex is \emph{strongly connected} if for every pair of facets $\sigma$ and $\tau$ of $\Delta$, there is a sequence of facets $\sigma=\sigma_0,\sigma_1,\ldots, \sigma_m=\tau$ such that $|\sigma_{i-1} \cap \sigma_i|=d-1$ for $i=1,\ldots,m$.
A \emph{$(d-1)$-pseudomanifold} (without boundary) is a strongly connected pure $(d-1)$-dimensional simplicial complex such that every $(d-2)$-face is contained in exactly two facets.
A $(d-1)$-pseudomanifold is \emph{orientable} over $\mathbbm{k}$ if $\widetilde{H}_{d-1}(\Delta;\mathbbm{k})\cong \mathbbm{k}$.

\subsection{Lefschetz properties}
An Artinian Gorenstein standard $\mathbb{N}$-graded algebra $A=A_0\oplus\cdots\oplus A_d$ with $A_0 \simeq A_d \simeq \mathbbm{k}$ is said to have the \emph{weak Lefschetz property} if there is a linear element $\ell \in A_1$ such that the multiplication map $\times\ell:A_i \rightarrow A_{i+1}$ is either injective or surjective (or both) for $i=0,\ldots,d-1$.
It is said to have the \emph{strong Lefschetz property} if there is a linear element $\ell \in A_1$ such that the multiplication map $\times\ell^{d-2i}:A_i \rightarrow A_{d-i}$ is an isomorphism for all $i \leq \frac{d}{2}$.
For a general reference, see~\cite{HMMNWW}.

\section{Lee's formula for the evaluation map}
Let $\mathbbm{k}$ be a field of arbitrary characteristic, and let $\Delta$ be a $(d-1)$-dimensional simplicial complex.
Let $A=\mathbbm{k}[\Delta]/(\Theta)=A_0\oplus\cdots\oplus A_d$ be an Artinian reduction of $\mathbbm{k}[\Delta]$ with respect to an l.s.o.p.~$\Theta$.
Then, by \cite[Corollary 3.2]{TW}, $A_d$ is linearly isomorphic to $\widetilde{H}_{d-1}(\Delta;\mathbbm{k})$.
Thus, for a $(d-1)$-pseudomanifold $\Delta$ (without boundary) orientable over $\mathbbm{k}$, $A_d$ is a one-dimensional linear space. 
The linear isomorphism $\Psi:A_d \overset{\cong}{\rightarrow} \mathbbm{k}$ which is uniquely determined up to scaling is called the \emph{evaluation map} (or \emph{degree map, volume map, Brion's isomorphism}).
Lee~\cite{Lee} gave an explicit description of the evaluation map $\Psi$ with the appropriate scaling. (Although Lee's description is originally over $\mathbbm{k}=\mathbb{R}$, it readily extends to an arbitrary field. See also an equivalent description by Karu and Xiao~\cite{KX}.)
As the formula with the appropriate normalization plays an important role, we recall the formula below.

Let us prepare some conventions and notation used throughout the paper.
We always assume that $V(\Delta)=[n]:=\{1,\ldots,n\}$ and let $\mathbbm{k}[\bm{x}]=\mathbbm{k}[x_1,\ldots,x_n]$. For a sequence $J=(v_1,\ldots,v_k)$ of vertices (possibly with repetitions), we denote $x_J=x_{v_1}\cdots x_{v_k}$.
We abbreviate the projection from $\mathbbm{k}[\bm{x}]$ to an Artinian reduction $A$ of $\mathbbm{k}[\Delta]$ as long as it is not confusing.
So, for example, the composite $\mathbbm{k}[\bm{x}]_d \twoheadrightarrow A_d \overset{\cong}{\rightarrow} \mathbbm{k}$ is also denoted as $\Psi$.
An l.s.o.p.~$\Theta=(\theta_1,\ldots,\theta_d)$ for $\mathbbm{k}[\Delta]$ is identified with a map $p:V(\Delta) \rightarrow \mathbbm{k}^d$ through the relation $\theta_k=\sum_{v\in V(\Delta)} p(v)_k x_v$ for $k=1,\ldots,d$. The map $p$ is called a \emph{point configuration}.
For a positively oriented facet $\sigma=[v_1,\ldots,v_d]$ of $\Delta$, let $[\sigma]=\det \begin{pmatrix} p(v_1) & \cdots & p(v_d) \end{pmatrix}$.

We also need the following notation to state Lee's formula.
Let $v^*$ be a new vertex not in $V(\Delta)$ with an associated position $p'(v^*) \in \mathbbm{k}^d$, and for a positively oriented facet $\sigma=[v_1,\ldots,v_d]$, let $[\sigma-v_i+v^*]$ be the determinant of the matrix obtained by replacing the $i$-th column of the matrix $\begin{pmatrix} p(v_1) & \cdots & p(v_d) \end{pmatrix}$ with $p'(v^*)$.
Here, $p'(v^*)$ has to be in sufficiently general position so that none of $[\sigma-v_i+v^*]$ vanishes. (One may need to extend the field to choose such a vector when $\mathbbm{k}$ is a finite field.)

Now we are ready to state Lee's formula.
\begin{lemma}{\cite[Lemma 3.1, Theorem 3.2]{KX}} \label{lem:Lee formula}
Let $\Delta$ be an orientable $(d-1)$-pseudomanifold over a field $\mathbbm{k}$.
Fix an orientation of the facets of $\Delta$.
Let $A$ be an Artinian reduction of $\mathbbm{k}[\Delta]$ with respect to an l.s.o.p.~$\Theta$, and let $\Psi:A_d \rightarrow \mathbbm{k}$ be the evaluation map.
Then, under a suitable normalization, the following hold:
\begin{itemize}
    \item[(i)] For any positively oriented facet $\sigma \in \Delta$, we have $\Psi(x_\sigma)=\frac{1}{[\sigma]}$.
    \item[(ii)] More generally, for any length $d$ sequence of vertices $J=(v_1,\ldots,v_d)$, we have
\begin{align} 
\Psi(x_J)&=\sum_{\sigma \in \Delta:\text{ facet containing }\supp(x_J)} \frac{1}{[\sigma]} \frac{\prod_{k=1}^{d}[\sigma + v^* - v_k]}{\prod_{v \in \sigma} [\sigma+v^*-v]} .\label{eq:eval}
\end{align}
\end{itemize}
Here, the sum is taken over all positively oriented facets of $\Delta$ containing $\supp(x_J):=\{v_1,\ldots,v_n\}$.
\end{lemma}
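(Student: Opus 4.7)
The plan is to define a candidate linear functional $\widetilde\Psi : \mathbbm{k}[\bm{x}]_d \to \mathbbm{k}$ by declaring $\widetilde\Psi(x_J)$ to equal the right-hand side of (\ref{eq:eval}) for every length-$d$ vertex sequence $J$ and extending linearly. Once one shows $\widetilde\Psi$ vanishes on $(I_\Delta + (\Theta))\cap\mathbbm{k}[\bm{x}]_d$ and is not identically zero, it descends to a nonzero linear map $A_d \to \mathbbm{k}$. Since $A_d \cong \mathbbm{k}$ is one-dimensional, this descended map is automatically a nonzero scalar multiple of the evaluation map $\Psi$, and choosing the normalization of $\Psi$ to absorb this scalar proves (i) and (ii) simultaneously.

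Two items are nearly free from the formula. When $J$ lists the vertices of a positively oriented facet $\sigma$, only the summand for $\sigma$ survives in (\ref{eq:eval}) and every numerator factor $[\sigma+v^*-v_k]$ cancels the corresponding denominator factor $[\sigma+v^*-v]$, leaving $\widetilde\Psi(x_\sigma)=1/[\sigma]$; this already yields (i) and shows $\widetilde\Psi\not\equiv 0$. If instead $\supp(x_J)\notin\Delta$, the sum in (\ref{eq:eval}) is empty, so $\widetilde\Psi$ annihilates $I_\Delta$ in degree $d$.

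The core task is to show $\widetilde\Psi(\theta_k\cdot x_J)=0$ for each l.s.o.p.~generator $\theta_k=\sum_v p(v)_k x_v$ and each length-$(d-1)$ sequence $J$ with $\tau:=\supp(x_J)\in\Delta$. Expanding $\theta_k x_J=\sum_v p(v)_k x_{J,v}$, swapping the sum over $v$ with the facet sum in (\ref{eq:eval}), and applying the Cramer identity $\sum_{v\in\sigma}[\sigma+v^*-v]\,p(v)=[\sigma]\,p'(v^*)$ inside each facet $\sigma\supseteq\tau$ collapses the inner sum to $[\sigma]\,p'(v^*)_k$, cancels the $[\sigma]$ factors, and reduces the claim to the combinatorial identity
\[
S_\tau \;:=\; \sum_{\substack{\sigma\ \text{facet}\\ \sigma\supseteq\tau}} \frac{\prod_{w\in\tau}[\sigma+v^*-w]^{a_w-1}}{\prod_{v'\in\sigma\setminus\tau}[\sigma+v^*-v']} \;=\; 0,
\]
where $a_w\ge 1$ is the multiplicity of $w\in\tau$ in $J$. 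Each summand is a homogeneous rational function of $p'(v^*)$ of degree $-1$, so it suffices to show that $S_\tau$ is pole-free in $p'(v^*)$: a homogeneous polynomial of negative degree must vanish.

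The potential poles lie on the hyperplanes $H_{\sigma,v'}=\{[\sigma+v^*-v']=0\}$ for $v'\in\sigma\setminus\tau$, and $H_{\sigma,v'}$ coincides with the affine span of the codim-$1$ face $\tau':=\sigma\setminus\{v'\}\supseteq\tau$. By the pseudomanifold axiom exactly one other facet $\sigma''=\tau'\cup\{u\}$ contains $\tau'$; since $\tau\subseteq\tau'$, it also contains $\tau$ and contributes to $S_\tau$, and its factor $[\sigma''+v^*-u]$ vanishes on the same hyperplane. Expressing $p'(v^*)$ in barycentric coordinates on $\aff\{p(w):w\in\tau'\}$, a direct determinant expansion matches the residues of the $\sigma$- and $\sigma''$-summands, while coherent orientation of the pseudomanifold forces the identity $[\sigma''+v^*-u]=-[\sigma+v^*-v']$; the resulting sign flip makes the two contributions to $\lim_{H_{\sigma,v'}}[\sigma+v^*-v']\cdot S_\tau$ cancel. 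Hence $S_\tau$ has no poles and vanishes identically. Combined with the earlier steps, $\widetilde\Psi$ factors through $A_d$ and agrees with a nonzero scalar multiple of $\Psi$; normalizing $\Psi$ by this scalar yields (i) and (ii). The main obstacle is the orientation bookkeeping in this residue cancellation, which is precisely where the pseudomanifold hypothesis enters essentially.
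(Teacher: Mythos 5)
Your proposal is correct, and it gives a self-contained direct verification where the paper itself gives none: the lemma is simply cited from Karu--Xiao, and the accompanying remark only sketches a derivation over $\mathbb{R}$ through Lee's linear $d$-stress space ($\Psi$ as pairing against the canonical stress $\gamma$). Your route is the ``kill the ideal'' route: define $\widetilde\Psi$ on $\mathbbm{k}[\bm{x}]_d$ by the right-hand side of~(\ref{eq:eval}), observe it annihilates $(I_\Delta)_d$ because the facet sum is empty for non-faces and (i) reads off for facets, and reduce the remaining task $\widetilde\Psi(\theta_k x_J)=0$ via Cramer to the identity $S_\tau=0$. Two small remarks on the reduction. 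First, after swapping sums and applying $\sum_{v\in\sigma}[\sigma+v^*-v]\,p(v)=[\sigma]\,p'(v^*)$, the $[\sigma]$ cancels and one gets $\widetilde\Psi(\theta_kx_J)=p'(v^*)_k\cdot S_\tau$, so indeed $S_\tau=0$ is exactly what is needed and the scalar $p'(v^*)_k$ is harmless. Second, the residue bookkeeping you flag as the ``main obstacle'' works out cleanly: if $q\in\operatorname{span}\{p(w):w\in\tau'\}$ is written as $q=\sum_{w\in\tau'}c_w\,p(w)$, multilinearity gives $[\sigma+v^*-w]|_{p'(v^*)=q}=c_w[\sigma]$ for every $w\in\tau'$ (and similarly for $\sigma''$), so the residue numerators $A_\sigma$ and $A_{\sigma''}$ both equal $\bigl(\prod_{w\in\tau}c_w^{a_w-1}\bigr)\big/\bigl(\prod_{v''\in\tau'\setminus\tau}c_{v''}\bigr)$ with all $[\sigma]$'s and $[\sigma'']$'s cancelling; coherent orientation does force $[\sigma''+v^*-u]=-[\sigma+v^*-v']$, hence the contributions sum to $A_\sigma - A_{\sigma''}=0$. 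One should also say explicitly that the l.s.o.p.\ hypothesis (linear independence of $\{p(v):v\in\sigma\}$) guarantees the hyperplanes $H_{\sigma,v'}$, $v'\in\sigma\setminus\tau$, are pairwise distinct so each facet's term has only simple poles, and that the pseudomanifold axiom is what ensures exactly one other summand shares a given pole hyperplane. With these points spelled out, your argument is a complete proof; compared to the paper's cited route, it buys independence from Lee's stress machinery and works directly over an arbitrary field extension of $\mathbbm{k}$ containing the generic $p'(v^*)$, at the cost of the orientation and genericity bookkeeping you correctly identified as the crux.
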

Throughout the paper, we assume that the evaluation map $\Psi$ is normalized so that Lemma~\ref{lem:Lee formula}(i) holds.
We note that although we need the position $p'(v^*)$ of a new vertex to explicitly write the formula~(\ref{eq:eval}), the value after the computation of the right hand side of (\ref{eq:eval}) is independent of the choice of $p'(v^*)$.

\begin{remark}
As a side note, we describe how the formula (\ref{eq:eval}) is derived from the work of Lee~\cite{Lee} for the case where $\mathbbm{k}=\mathbb{R}$.
Define an inner product between degree $d$ homogeneous polynomials $a(\bm{x})=\sum_{|\bm{r}|=d} a_{\bm{r}} \frac{\bm{x}^{\bm{r}}}{\bm{r}!}$ and $b(\bm{x})=\sum_{|\bm{r}|=d} b_{\bm{r}} \frac{\bm{x}^{\bm{r}}}{\bm{r}!}$ by
\begin{equation*}
\langle a(\bm{x}), b(\bm{x}) \rangle = \sum_{|\bm{r}|=d} a_{\bm{r}}b_{\bm{r}}.
\end{equation*}
The orthogonal complement of $(I_\Delta + (\Theta))_d :=(I_\Delta + (\Theta)) \cap \mathbbm{k}[\bm{x}]_d$ is called the linear $d$-stress space of $(\Delta,p)$, where $p$ is the point configuration associated to $\Theta$.
The space of linear $d$-stress is linearly isomorphic to $A_d$, and thus there is a base $\gamma$, unique up to scaling, of the linear $d$-stress.
The map $\mathbbm{k}[\bm{x}]_d \rightarrow \mathbbm{k}; a \mapsto \langle a,\gamma \rangle$ is a nonzero linear function that vanishes on $(I_\Delta + (\Theta))_d$. 
Thus, $\Psi$ (considered as a function over $\mathbbm{k}[\bm{x}]_d$) coincides with this map (up to scaling). 
It remains to describe the coefficient of the canonical linear $d$-stress $\gamma$.
The coefficients of the squarefree terms of $\gamma$ are given in \cite[Proof of Theorem 14]{Lee}, and the coefficients of the non-squarefree terms are then given by \cite[Theorem 11]{Lee}. These agree with the formula (\ref{eq:eval}).
\end{remark}

\section{Generic $\mathbb{N}^m$-graded Artinian reduction and differential formula}
\subsection{generic $\mathbb{N}^m$-graded Artinian reduction}
For an $\bm{a}$-balanced simplicial complex $(\Delta,\kappa)$ with $\bm{a} \in \mathbb{N}^m_+$ and $|\bm{a}|=d$, we define the generic $\mathbb{N}^m$-graded Artinian reduction of $\mathbbm{k}[\Delta]$ as follows.
Fix a partition $\mathcal{I}_1 \sqcup \cdots \sqcup \mathcal{I}_m$ of $[d]$ with $|\mathcal{I}_j|=a_j$ for $j=1,\ldots,m$.
Consider the set of new auxiliary indeterminates
\begin{equation*}
\{p_{k,v}: k \in [d], v \in V(\Delta), k \in \mathcal{I}_{\kappa(v)}\}
\end{equation*}
and let $\widetilde{\mathbbm{k}}=\mathbbm{k}(p_{k,v})$ be the rational function field of these indeterminates with coefficients in $\mathbbm{k}$. 
Define the $\mathbb{N}^m$-graded l.s.o.p.~$\Theta=(\theta_1,\ldots,\theta_d)$ by 
\[
\begin{pmatrix}
\theta_1 \\
\vdots \\
\theta_d
\end{pmatrix}
=
\bm{P}
\begin{pmatrix}
x_1 \\
\vdots \\
x_n
\end{pmatrix},
\]
where the $(k,v)$-th entry of the coefficient matrix $\bm{P}$ is $p_{k,v}$ if $k \in \mathcal{I}_{\kappa(v)}$ and $0$ otherwise.
The quotient $\mathbb{N}^m$-graded algebra $A=\widetilde{\mathbbm{k}}[\Delta]/(\Theta)$ is called the \emph{generic $\mathbb{N}^m$-graded Artinian reduction} of $\mathbbm{k}[\Delta]$ (with respect to a coloring $\kappa$ and $\bm{a}$).
Note that, when $m=1$, the generic $\mathbb{N}$-graded Artinian reduction coincides with the generic Artinian reduction in the sense of~\cite{PP}. We remark that to be consistent with the definition of~\cite{PP}, $A=\widetilde{\mathbbm{k}}[\Delta]/(\Theta)$ is called the generic $\mathbb{N}^m$-graded Artinian reduction of $\mathbbm{k}[\Delta]$, not of $\widetilde{\mathbbm{k}}[\Delta]$, though $A$ is the Artinian reduction of $\widetilde{\mathbbm{k}}[\Delta]$ in the usual sense.
By~\cite[Theorem 4.1]{Sta79}, as an $\mathbb{N}^m$-graded algebra, $A$ is decomposed into $\mathbb{N}^m$-homogeneous components as $A=\bigoplus_{\bm{0} \leq \bm{b} \leq \bm{a}} A_{\bm{b}}$.
The homogeneous decomposition as a coarse $\mathbb{N}$-graded algebra is denoted as $A=\bigoplus_{i=0}^d A_i$.

\subsection{Differential formula in characteristic $2$}
In the generic $\mathbb{N}$-graded Artinian reduction, the right-hand-side of (\ref{eq:eval}) in Lee's formula is a rational function of new indeterminates $p_{k,v}$. (As mentioned, it is independent of the position of the new vertex.)
Papadakis and Petrotou~\cite{PP} considered taking partial derivative of (\ref{eq:eval}) with respect to new indeterminates $p_{k,v}$, and they prove a remarkable formula in characteristic $2$.
This formula is later generalized by Karu and Xiao~\cite[Theorem 4.1]{KX}.
We recall this formula (see also~\cite{APP} for different formula that holds in arbitrary characteristic).

In this subsection, we assume that the field $\mathbbm{k}$ is of characteristic $2$. Then automatically every pseudomanifold is orientable.
For a $(d-1)$-pseudomanifold $\Delta$, let $A=\widetilde{\mathbbm{k}}[\Delta]/(\Theta)$ be the generic ($\mathbb{N}$-graded) Artinian reduction of $\mathbbm{k}[\Delta]$, where $\widetilde{\mathbbm{k}}=\mathbbm{k}(p_{kv}:k \in [d], v \in V(\Delta))$.
For a length $d$ sequence $I=(v_1,\ldots,v_d)$ of vertices, define the differential operator $\partial_I$ by $\partial_{p_{1,v_1}}\circ \cdots \circ \partial_{p_{d,v_d}}$, where $\partial_{p_{k,v}}$ denoted the (formal) partial derivative with respect to $p_{k,v}$. Under these notations, the following holds.
\begin{theorem}{\cite[Theorem 4.1]{KX}} \label{thm:KX}
Let $\Delta$ be a $(d-1)$-pseudomanifold, and let $\mathbbm{k}$ be a field of characteristic $2$.
Let $A=\widetilde{\mathbbm{k}}[\Delta]/(\Theta)$ be the generic $\mathbb{N}$-graded Artinian reduction of $\mathbbm{k}[\Delta]$, where $\widetilde{\mathbbm{k}}=\mathbbm{k}(p_{kv}:\, 1\leq k\leq d, \, v\in V(\Delta))$.
Let $\Psi:A_{d} \rightarrow \widetilde{\mathbbm{k}}$ be the evaluation map normalized as in~Lemma~\ref{lem:Lee formula}.
Then, for any length $d$ sequences $I$ and $J$ of vertices,
\[
\partial_I \Psi(x_J) =\Psi(\sqrt{x_Ix_J})^2
\]
holds.
Here, for a monomial $x_L$, define its square root $\sqrt{x_L}$ by $x_K$ if there is a monomial $x_K$ with $x_K^2=x_L$ and $0$ otherwise.
\end{theorem}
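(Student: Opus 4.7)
The plan is to verify the identity as a term-by-term equality in Lee's formula (\ref{eq:eval}), exploiting two characteristic-$2$ features: the Frobenius identity $(a+b)^2 = a^2+b^2$, and its infinitesimal counterpart $\partial_{p_{k,v}}(y^2) = 0$ for every $y \in \widetilde{\mathbbm{k}}$, which says that squares are constants for the formal derivations $\partial_{p_{k,v}}$.

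Write Lee's formula as $\Psi(x_J) = \sum_\sigma T_\sigma(J)$, where $T_\sigma(J)$ is the $\sigma$-summand of (\ref{eq:eval}) (interpreted as $0$ when $\sigma$ does not contain $\supp(x_J)$). I would aim for the stronger \emph{term-by-term} identity
\[
\partial_I T_\sigma(J) = T_\sigma(\sqrt{x_Ix_J})^2 \quad \text{for every facet } \sigma.
\]
Granting this, summing over $\sigma$ and applying Frobenius gives
\[
\partial_I \Psi(x_J) = \sum_\sigma T_\sigma(\sqrt{x_Ix_J})^2 = \Bigl(\sum_\sigma T_\sigma(\sqrt{x_Ix_J})\Bigr)^2 = \Psi(\sqrt{x_Ix_J})^2,
\]
as desired. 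Both sides of the term-by-term identity are rational functions of the $p_{k,v}$ and of the auxiliary position $p'(v^*)$ of the new vertex; the sum becomes independent of $p'(v^*)$ a posteriori, as already noted after Lemma \ref{lem:Lee formula}.

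To prove the term-by-term identity, I would clear denominators to write $T_\sigma(J) = N_\sigma(J)/D_\sigma^2$ with $D_\sigma = [\sigma]\prod_{v\in\sigma}[\sigma+v^*-v]$. Since $D_\sigma^2$ is a square, $\partial_I T_\sigma(J) = \partial_I(N_\sigma(J))/D_\sigma^2$, reducing the claim to a polynomial identity about iterated derivatives of the product-of-determinants $N_\sigma(J)$. Each factor of $N_\sigma(J)$ is multilinear in the entries $p_{k,v}$ with $v\in\sigma$. In the base case $J=\sigma$, Lemma \ref{lem:Lee formula}(i) gives $T_\sigma(\sigma) = 1/[\sigma]$, and the identity reduces to the multilinear-algebra fact that $\partial_I[\sigma]$ equals $1$ in characteristic $2$ precisely when $I$ is a permutation of $\sigma$, matching the fact that $\sqrt{x_Ix_\sigma}$ is nonzero exactly then. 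The general case follows by expanding each $[\sigma+v^*-v_k]$ along the column $p'(v^*)$ as a $\widetilde{\mathbbm{k}}$-linear combination of $(d-1)$-minors of the $\sigma$-matrix and matching the iterated derivatives.

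The main obstacle is the combinatorial bookkeeping in the general case: one must check that derivatives landing across the different determinantal factors of $N_\sigma(J)$ reorganize, in characteristic $2$, into the squared numerator of $T_\sigma(\sqrt{x_Ix_J})$. The parity constraint on $x_Ix_J$ required for $\sqrt{x_Ix_J}$ to be nonzero corresponds exactly to the ``no repeated derivative along a given row'' constraint for nonvanishing of iterated partials of a multilinear determinant, so the identity should be forced by multilinearity and dimension counts, though the bookkeeping requires care. The characteristic-$2$ hypothesis is essential throughout: it eliminates signs, annihilates squared denominators, and via Frobenius turns the final sum-of-squares into a square-of-sum.
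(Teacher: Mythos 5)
The proposal follows the same strategy as the Karu--Xiao proof: decompose $\Psi$ into the facet-indexed summands $T_\sigma$ of Lee's formula~(\ref{eq:eval}), prove the differential identity term-by-term (for each $\sigma$, the summand $T_\sigma$ is exactly the evaluation map of the simplex boundary $\partial(\sigma\cup\{v^*\})$), and then pass to the sum using Frobenius. Your reduction to a polynomial identity by clearing the squared denominator $D_\sigma^2$ and your treatment of the base case $J=\sigma$ are both correct, and I spot-checked the term-by-term identity in a few low-dimensional cases with repeated indices --- it does hold, $p'(v^*)$-dependence and all.

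However, there is a genuine gap: the assertion that the general case ``follows by expanding each $[\sigma+v^*-v_k]$ along the column $p'(v^*)$'' and that ``the identity should be forced by multilinearity and dimension counts'' is not a proof, and it understates the difficulty. The polynomial identity $\partial_I N_\sigma(J) = \bigl(\prod_{k}[\sigma+v^*-w_k]\bigr)^2$ (where $\sqrt{x_Ix_J}=x_{w_1}\cdots x_{w_d}$) is precisely the Karu--Xiao theorem for the boundary of a $d$-simplex, which is the computational heart of their argument and occupies an entire section of their paper; it is \emph{not} a routine consequence of multilinearity. The difficulty is that $N_\sigma(J)$ is a product of $2d+1$ determinantal factors, the Leibniz rule distributes each of the $d$ partials across all the odd-multiplicity factors, and one must show that the resulting sum of products of cofactors collapses, in characteristic $2$, to a single perfect square; when it does not collapse one must show it vanishes. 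Parity of exponents (your ``no repeated derivative along a given row'' heuristic) is necessary but far from sufficient to organize this, and there is no dimension count that forces the identity. A correct write-up would need to either carry out this cancellation explicitly (as Karu--Xiao do, via a generating-function/Cramer-type expansion for the simplex) or cite their Proposition for the simplex case directly. Until that step is filled in, the proposal is an outline of the right proof, not a proof.
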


We generalize the formula in Theorem~\ref{thm:KX} in the setting of generic $\mathbb{N}^m$-graded Artinian reduction by a simple trick of substitution.
Let $(\Delta,\kappa)$ be an $\bm{a}$-balanced pseudomanifold and let $A=\widetilde{\mathbbm{k}}[\Delta]/(\Theta)$ be the generic $\mathbb{N}^m$-graded Artinian reduction of $\mathbbm{k}[\Delta]$, where $\widetilde{\mathbbm{k}}=\mathbbm{k}(p_{kv})$.
We call a length $d$ sequence of vertices $I=(v_1,\ldots,v_d)$ (possibly with repetition) \emph{$\kappa$-transversal} if $k \in \mathcal{I}_{\kappa(v_k)}$ for $k=1,\ldots,d$.
Note that $I=(v_1,\ldots,v_d)$ is a $\kappa$-transversal sequence if and only if there exist corresponding variables $p_{1,v_1}, \ldots, p_{d,v_d}$.
Note also that for every degree $\bm{a}$ monomial $x_J$ in $\mathbbm{k}[\bm{x}]$, $J$ can be reordered into a $\kappa$-transversal sequence.
For a $\kappa$-transversal sequence $I=(v_1,\ldots,v_d)$, define the differential operator $\partial_I$ by $\partial_{p_{1,v_1}}\circ \cdots \circ \partial_{p_{d,v_d}}$.
The following differential formula for the map $\Psi$ holds.
\begin{lemma} \label{lem:diff}
Let $(\Delta,\kappa)$ be an $\bm{a}$-balanced $(d-1)$-pseudomanifold for $\bm{a}\in\mathbb{N}^m_+$ with $|\bm{a}|=d$ and let $\mathbbm{k}$ be a field of characteristic $2$.
Let $A=\widetilde{\mathbbm{k}}[\Delta]/(\Theta)$ be the generic $\mathbb{N}^m$-graded Artinian reduction of $\mathbbm{k}[\Delta]$ with respect to $\kappa$ and $\bm{a}$.
Let $\Psi:A_{\bm{a}} \rightarrow \widetilde{\mathbbm{k}}$ be the evaluation map normalized as in~Lemma~\ref{lem:Lee formula}.
Then, for any $\kappa$-transversal sequence $I$ and $J$,
\[
\partial_I \Psi(x_J) =\Psi(\sqrt{x_Ix_J})^2
\]
holds.
Here, for a monomial $x_L$, define its square root $\sqrt{x_L}$ by $x_K$ if there is a monomial $x_K$ with $x_K^2=x_L$ and $0$ otherwise.
\end{lemma}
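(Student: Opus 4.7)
The plan is to deduce the multigraded formula from Theorem~\ref{thm:KX} by a specialization argument. Consider the ``fully generic'' $\mathbb{N}$-graded Artinian reduction $\widetilde{A}=\widetilde{\mathbbm{k}}'[\Delta]/(\widetilde{\Theta})$, where $\widetilde{\mathbbm{k}}'=\mathbbm{k}(p_{k,v}:k\in[d],\ v\in V(\Delta))$ treats all $nd$ coefficient variables as algebraically independent and $\widetilde{\theta}_k=\sum_v p_{k,v}\,x_v$. Theorem~\ref{thm:KX} then gives
\[
\partial_I \widetilde{\Psi}(x_J)=\widetilde{\Psi}(\sqrt{x_Ix_J})^2
\]
for any length-$d$ sequences $I,J$ of vertices. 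The generic $\mathbb{N}^m$-graded reduction of the lemma is obtained from $\widetilde{A}$ by the ring homomorphism $\varphi:\widetilde{\mathbbm{k}}'\to\widetilde{\mathbbm{k}}$ which sends $p_{k,v}\mapsto 0$ whenever $k\notin\mathcal{I}_{\kappa(v)}$ and fixes the remaining $p_{k,v}$'s. By Stanley's theorem cited in Section~2.2, the specialized $\Theta$ is again an l.s.o.p., so $A=\widetilde{\mathbbm{k}}[\Delta]/(\Theta)$ is well-defined and $A_{\bm{a}}$ is one-dimensional (char-$2$ orientability).

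Next, Lee's formula (\ref{eq:eval}) expresses $\widetilde{\Psi}(x_J)$ as a rational function in the $p_{k,v}$'s whose denominators (products of the minors $[\sigma]$ and $[\sigma+v^*-v]$) remain nonzero after applying $\varphi$ by the genericity of the surviving variables. Since the normalization $\widetilde{\Psi}(x_\sigma)=1/[\sigma]$ of Lemma~\ref{lem:Lee formula}(i) manifestly specializes to $\Psi(x_\sigma)=1/[\sigma]$ for every facet $\sigma$, and both top graded pieces $\widetilde{A}_d$ and $A_{\bm{a}}$ are one-dimensional, we obtain $\varphi\bigl(\widetilde{\Psi}(x_J)\bigr)=\Psi(x_J)$ for every $\kappa$-transversal length-$d$ sequence $J$. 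Now for a $\kappa$-transversal $I=(v_1,\ldots,v_d)$, the variables $p_{k,v_k}$ (with $k\in\mathcal{I}_{\kappa(v_k)}$) all survive $\varphi$ by definition, so $\varphi$ commutes with $\partial_I$, giving
\[
\partial_I\Psi(x_J)=\partial_I\varphi\bigl(\widetilde{\Psi}(x_J)\bigr)=\varphi\bigl(\partial_I\widetilde{\Psi}(x_J)\bigr)=\varphi\bigl(\widetilde{\Psi}(\sqrt{x_Ix_J})^2\bigr)=\Psi(\sqrt{x_Ix_J})^2,
\]
which is the desired formula.

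The main (and essentially the only) obstacle is justifying the compatibility $\varphi\circ\widetilde{\Psi}=\Psi$ cleanly, namely that the specialization of the normalized evaluation map on the fully generic reduction coincides with the normalized evaluation map of the $\mathbb{N}^m$-graded reduction. This reduces to matching the normalizations on a single facet together with the one-dimensionality of the top graded piece, both of which are immediate from Lemma~\ref{lem:Lee formula}(i) and orientability in characteristic $2$. Once that is in place, the formula follows purely from the fact that the differential operator $\partial_I$ for $\kappa$-transversal $I$ involves only the ``surviving'' $p$-variables and hence commutes with $\varphi$.
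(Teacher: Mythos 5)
Your proof is essentially the paper's argument: specialize Theorem~\ref{thm:KX} for the fully generic $\mathbb{N}$-graded reduction via the substitution sending $p_{k,v}\mapsto 0$ for $k\notin\mathcal{I}_{\kappa(v)}$, and use the fact that $\partial_I$ commutes with this substitution when $I$ is $\kappa$-transversal. Two points where the paper is tighter and you should match it. First, there is no nontrivial ring homomorphism from the field $\widetilde{\mathbbm{k}}'$ to $\widetilde{\mathbbm{k}}$, so the substitution must be defined on a subring: the paper takes $R\subset\widetilde{\mathbbm{k}}'$ to be the localization of $\mathbbm{k}[p_{k,v}]$ at the prime generated by the discarded variables $\{p_{k,v}:k\notin\mathcal{I}_{\kappa(v)}\}$, and verifies $\widetilde\Psi(x_J),\widetilde\Psi(\sqrt{x_Ix_J})\in R$ using the Kind--Kleinschmidt criterion to control the denominators in Lee's formula; your appeal to ``genericity of the surviving variables'' is gesturing at the right thing but is not an argument. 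Second, your detour through one-dimensionality of $A_{\bm{a}}$ to conclude $\varphi\circ\widetilde\Psi=\Psi$ is an unnecessary workaround that itself needs justification (one must still check that $\varphi\circ\widetilde\Psi$ factors through $A_{\bm{a}}$, i.e.\ kills $(I_\Delta+(\Theta))_{\bm{a}}$). It is cleaner, as the paper does, to observe that Lemma~\ref{lem:Lee formula}(ii) already gives the identical explicit rational expression for $\widetilde\Psi(x_L)$ and $\Psi(x_L)$ for \emph{every} degree-$d$ monomial $x_L$, so the specialization of the former directly equals the latter.
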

\begin{proof}
Denote the generic $\mathbb{N}$-graded Artinian reduction of $\mathbbm{k}[\Delta]$ by $A'=K[\Delta]/(\Theta)$, where $K=\mathbbm{k}(p_{kv}:k \in [d], v\in V(\Delta))$, and the corresponding normalized evaluation map by $\Psi':A'_d \rightarrow K$.
Let $R$ be the localization of $\mathbbm{k}[p_{kv}:k \in [d], v\in V(\Delta)]$ at the irreducible polynomials $\{p_{kv}:k \not\in \mathcal{I}_{\kappa(v)}\}$.
Fix a $\kappa$-transversal sequence $I$ and a length $d$ sequence $J$ of vertices.
By Theorem~\ref{thm:KX}, we have the identity
\begin{equation}\label{eq:KX}
    \partial_I(\Psi'(x_J)) = \Psi'(\sqrt{x_Ix_J})^2.
\end{equation}
Here, we have $\Psi'(x_J), \Psi(\sqrt{x_Ix_J}) \in R$.
This is because, in the right hand side of Lee's formula (\ref{eq:eval}), the denominators do not vanish after the substitutions $p_{kv}=0$ for all pairs of $(k,v)$ with $k \not\in \mathcal{I}_{\kappa(v)}$ by the Kind-Kleinschmidt's criterion on l.s.o.p.~for Stanley-Reisner ring~\cite[Lemma III.2.4]{Sta-book}.

The map $\xi:R \rightarrow \widetilde{\mathbbm{k}}$ defined by the substitution $p_{kv}=0$ for all $(k,v)$ with $k \not\in \mathcal{I}_{\kappa(v)}$ is a ring homeomorphism, and $\xi$ commutes with the partial derivative $\partial_{p_{k,v}}$ for any $(k,v)$ with $k\in \mathcal{I}_{\kappa(v)}$.
As we have $\xi \circ \Psi'(x_L)=\Psi(x_L)$ for any total degree $d$ monomial $x_L$, we can deduce the desired identity from~(\ref{eq:KX}).
\end{proof}
Lemma~\ref{lem:diff} can be readily strengthened as below.
\begin{corollary} \label{cor:diff general}
Let $(\Delta,\kappa)$, $d$, $A$, $\Psi$ be as in Lemma~\ref{lem:diff}. 
For a $\kappa$-transversal sequence $I$, an element $g \in A_i$ with $i \leq \frac{d}{2}$, and a length $d-2i$ sequence $J$ of vertices,
\[
\partial_I \Psi(g^2x_J) =\Psi(g\sqrt{x_Ix_J})^2
\]
holds.
\end{corollary}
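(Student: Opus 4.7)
The plan is to reduce Corollary~\ref{cor:diff general} to Lemma~\ref{lem:diff} by expanding $g$ in a monomial basis and exploiting two consequences of working in characteristic $2$: the Frobenius-type identity $(\sum_k f_k)^2 = \sum_k f_k^2$, and the vanishing $\partial_p(c^2) = 2c\,\partial_p c = 0$ for every auxiliary variable $p$ and every $c \in \widetilde{\mathbbm{k}}$.

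First I would fix an expression $g = \sum_k c_k\,x_{M_k}$ with $c_k \in \widetilde{\mathbbm{k}}$ and each $M_k$ a length-$i$ vertex sequence. The Frobenius identity gives $g^2 x_J = \sum_k c_k^2\,x_{M_k}^2 x_J$ (all cross terms die), and $\widetilde{\mathbbm{k}}$-linearity of $\Psi$ combined with $\partial_p(c_k^2)=0$ lets the coefficients $c_k^2$ slide past $\partial_I$, so that
\[
\partial_I\Psi(g^2 x_J) \;=\; \sum_k c_k^2\,\partial_I\Psi(x_{M_k}^2 x_J).
\]
Next I would apply Lemma~\ref{lem:diff} to each summand, taking the `$J$' of the lemma to be the length-$d$ concatenation $M_k\cdot M_k\cdot J$, which yields
\[
\partial_I\Psi(x_{M_k}^2 x_J) \;=\; \Psi\bigl(\sqrt{x_I\cdot x_{M_k}^2\cdot x_J}\bigr)^2 \;=\; \Psi\bigl(x_{M_k}\sqrt{x_I x_J}\bigr)^2,
\]
using the factorization $\sqrt{x_L\cdot x_M^2}=x_M\sqrt{x_L}$ valid whenever $\sqrt{x_L}$ exists. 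A final reverse application of the Frobenius identity then rewrites the total as
\[
\sum_k c_k^2\,\Psi(x_{M_k}\sqrt{x_I x_J})^2 \;=\; \Bigl(\sum_k c_k\,\Psi(x_{M_k}\sqrt{x_I x_J})\Bigr)^2 \;=\; \Psi(g\sqrt{x_I x_J})^2,
\]
which is the desired identity.

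The main subtlety to handle is the $\kappa$-transversality hypothesis of Lemma~\ref{lem:diff}: the concatenation $M_k\cdot M_k\cdot J$ need not be $\kappa$-transversal for the arbitrary monomials $x_{M_k}$ appearing in the expansion of $g$. When it fails to be transversal, the monomial $x_{M_k}^2 x_J$ already lies in a graded component $A_{\bm{b}}$ with $\bm{b}\neq\bm{a}$, hence is zero in $A$, so $\partial_I\Psi(x_{M_k}^2 x_J)=0$; an identical $\mathbb{N}^m$-degree check shows that $x_{M_k}\sqrt{x_I x_J}$ likewise vanishes in $A_{\bm{a}}$. Thus both sides of the Lemma~\ref{lem:diff} identity are zero on such summands and the term-by-term argument above goes through without any extra hypothesis on $J$.
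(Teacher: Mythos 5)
Your proof is correct and is essentially the paper's proof: expand $g$ in monomials, use the Frobenius identity and the characteristic-$2$ rule $\partial_{p}(c^2 f)=c^2\partial_{p} f$ to pull the (squared) coefficients through $\partial_I$, apply Lemma~\ref{lem:diff} to each summand via $\sqrt{x_I x_{M_k}^2 x_J}=x_{M_k}\sqrt{x_I x_J}$, and close with a reverse Frobenius. The extra paragraph on $\kappa$-transversality is unnecessary: the proof of Lemma~\ref{lem:diff} explicitly fixes only a $\kappa$-transversal $I$ together with an \emph{arbitrary} length-$d$ sequence $J$, so the lemma already applies with the role of ``$J$'' played by $M_k\cdot M_k\cdot J$, transversal or not. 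Your fallback degree argument is sound in spirit, but be aware that what controls vanishing in $A$ is the fine $\mathbb{N}^m$-degree of the monomial $x_{M_k}^2 x_J$, not $\kappa$-transversality of a particular ordering of the sequence; those agree only up to permutation.
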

\begin{proof}
Writing $g=\sum_{K} \lambda_K x_K$ ($\lambda_K \in \widetilde{\mathbbm{k}}$)\footnote{Recall that we are abbreviating the projection from the polynomial ring to $A$.}, we have
\begin{align*}
\partial_I \Psi(g^2x_J) &= \partial_I \Psi\left(\sum_{K} \lambda_K^2 x_K^2 x_J\right) & (\text{by }\left(\sum_{K} \lambda_K x_K\right)^2 = \sum_{K} \lambda_K^2 x_K^2 \text{ in characteristic $2$}) \\
&=\sum_{K} \partial_I (\lambda_K^2 \Psi(x_K^2x_J)) & (\text{by linearity of $\Psi, \partial_I$}) \\
&=\sum_{K} \lambda_K^2 \partial_I  \Psi(x_K^2x_J) & (\text{by } \partial_{p_{k,v}}(f^2g)=f^2\partial_{p_{k,v}}(g) \text{ for $f,g \in \widetilde{\mathbbm{k}}$ in characteristic $2$}) \\
&=\sum_{K} \lambda_K^2 \Psi(x_K \sqrt{x_Ix_J})^2 & (\text{by Lemma~\ref{lem:diff}}) \\
&=\Psi(g \sqrt{x_Ix_J})^2.
\end{align*}
\end{proof}

\section{Proof of of Theorem~\ref{thm:main multiHL} via anisotropy}
Throughout this section, we assume that $\mathbbm{k}$ is a field of characteristic $2$ and $(\Delta,\kappa)$ is an $\bm{a}$-balanced homology sphere over $\mathbb{F}_2$ for $\bm{a} \in \mathbb{N}^m_+$ with $|\bm{a}|=d$. 
Let $A=\widetilde{\mathbbm{k}}[\Delta]/(\Theta)$ be the generic $\mathbb{N}^m$-graded Artinian reduction of $\mathbbm{k}[\Delta]$, where $\widetilde{\mathbbm{k}}=\mathbbm{k}(p_{kv})$.
By Gorensteiness, the multiplication map $A_i \times A_{d-i} \rightarrow A_d \overset{\Psi}{\rightarrow} \widetilde{\mathbbm{k}}$ is a nondegenerate for each $0 \leq i\leq d$.
Hence, the multiplication map $A_{\bm{b}} \times A_{\bm{a}-\bm{b}} \rightarrow A_{\bm{a}} \overset{\Psi}{\rightarrow} \widetilde{\mathbbm{k}}$ is nondegenerate for each $\bm{b}\in\mathbb{N}^m$ with $\bm{b} \leq \bm{a}$.
We call this property as \emph{multigraded Poincar\'{e} duality}.

Our proof of Theorem~\ref{thm:main multiHL} relies on anisotropy technique used in~\cite{APP,APPS,KX,PP}.
For a vector space $W$ over a field $\mathbbm{k}$, a bilinear form $\varphi:W \times W \rightarrow \mathbbm{k}$ is \emph{anisotropic} if $\varphi(u,u) \neq 0$ holds for any nonzero $u \in W$.
Note that a bilinear form is anisotropic if and only if the restriction $\varphi|_{W' \times W'}$ is nondegenerate for any nonzero subspace $W'$ of $W$.
We prove the following combination of anisotropy and multigraded strong Lefschetz property in a field of characteristic $2$ with the explicit Lefschetz elements.
\begin{theorem} \label{thm:main anis}
Let $(\Delta,\kappa)$ be an $\bm{a}$-balanced homology sphere over $\mathbb{F}_2$, and let $\mathbbm{k}$ be a field of characteristic $2$. 
Let $A=\widetilde{\mathbbm{k}}[\Delta]/(\Theta)$ be the generic $\mathbb{N}^m$-graded Artinian reduction of $\mathbbm{k}[\Delta]$.
Define $\ell_j=\sum_{v \in \kappa^{-1}(j)} x_v \in A_{\bm{e}_j}$ for $j=1,\ldots,m$.
Then, for any $\bm{b} \in \mathbb{N}^m$ with $\bm{b} \leq \frac{\bm{a}}{2}$, the bilinear form $\mathcal{Q}:A_{\bm{b}}\times A_{\bm{b}} \rightarrow \widetilde{\mathbbm{k}}$ defined by
\[
\mathcal{Q}(g,h)=\Psi(gh \bm{\ell}^{\bm{a}-2\bm{b}})
\]
is anisotropic, where $\Psi:A_{\bm{a}} \rightarrow \widetilde{\mathbbm{k}}$ is the evaluation map.
\end{theorem}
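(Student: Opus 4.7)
The plan is to argue by induction on $d=|\bm{a}|$, with the differential formula of Corollary~\ref{cor:diff general} as the main technical device, following the anisotropy strategies of Papadakis--Petrotou~\cite{PP} and Karu--Xiao~\cite{KX}. The base $d=1$ is immediate from Lee's formula. For the inductive step, assume the conclusion for all $\bm{a}'$-balanced homology spheres over $\mathbb{F}_2$ with $|\bm{a}'|<d$. Take nonzero $g\in A_{\bm{b}}$ with $\bm{b}\leq \bm{a}/2$, and suppose for contradiction that $\Psi(g^2\bm{\ell}^{\bm{a}-2\bm{b}})=0$ as a rational function in the parameters $\{p_{k,v}\}$. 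Expanding $\bm{\ell}^{\bm{a}-2\bm{b}}=\sum_J c_J\,x_J$ with $c_J\in\mathbb{F}_2$ (by Lucas' theorem on multinomials) and applying $\partial_I$ for an arbitrary $\kappa$-transversal sequence $I$ of length $d$, Corollary~\ref{cor:diff general} together with $c_J^2=c_J$ and the Frobenius identity $(a+b)^2=a^2+b^2$ give
\[
0\;=\;\partial_I\Psi\bigl(g^2\bm{\ell}^{\bm{a}-2\bm{b}}\bigr)\;=\;\Psi(g\,\Phi_I)^2,
\qquad
\Phi_I\;:=\;\sum_J c_J\sqrt{x_Ix_J}\;\in\;\widetilde{\mathbbm{k}}[\bm{x}]_{d-|\bm{b}|},
\]
where $\sqrt{x_Ix_J}$ is understood to vanish unless $x_Ix_J$ is a perfect square. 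Hence $\Psi(g\,\Phi_I)=0$ for \emph{every} $\kappa$-transversal $I$.

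The contradiction is produced through the link. Write $g=\sum_K\lambda_K x_K$ over faces $K\in\Delta$ of colour vector $\bm{b}$ and fix $K_0$ with $\lambda_{K_0}\neq 0$. When $\bm{b}\neq\bm{0}$, the link $\Delta':=\lk_{K_0}(\Delta)$ is an $(\bm{a}-\bm{b})$-balanced $(d-|\bm{b}|-1)$-dimensional homology sphere over $\mathbb{F}_2$, and since $|\bm{a}-\bm{b}|<d$ the inductive hypothesis applies; at the degree $\bm{0}$ of the link it yields $\Psi_{\Delta'}(\bm{\ell}_{\Delta'}^{\bm{a}-\bm{b}})\neq 0$, where $\Psi_{\Delta'}$ and $\bm{\ell}_{\Delta'}$ are the analogues built on $\Delta'$. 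The plan is to choose $I$ so that (i) every monomial of $\Phi_I$ contributing nontrivially to $\Psi$ is divisible by $x_{K_0}$---which kills the contributions of $K\neq K_0$ by the facet-support criterion in Lee's formula---and (ii) the quotient $\Phi_I/x_{K_0}$ corresponds, under the canonical identification of $A/\mathrm{ann}_A(x_{K_0})$ with a shift of the Artinian reduction of $\widetilde{\mathbbm{k}}[\Delta']$, to $\bm{\ell}_{\Delta'}^{\bm{a}-\bm{b}}$. Given such $I$, the inductive anisotropy forces $\Psi(x_{K_0}\Phi_I)\neq 0$, whence $\Psi(g\,\Phi_I)=\lambda_{K_0}\,\Psi(x_{K_0}\Phi_I)\neq 0$, contradicting the display above. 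The edge case $\bm{b}=\bm{0}$, where $g$ is a scalar and the statement reduces to $\Psi(\bm{\ell}^{\bm{a}})\neq 0$, is handled analogously by applying the $\partial_I$ reduction and then invoking the inductive hypothesis on $\lk_v(\Delta)$ for a suitably chosen vertex $v$.

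The hard part is engineering the $\kappa$-transversal sequence $I$ so that (i) and (ii) hold together. Three interacting constraints must be managed: $\kappa$-transversality pins $|I\cap\kappa^{-1}(j)|=a_j$ with a prescribed position structure; the parity condition demands that only $J$ with $x_Ix_J$ a perfect square survive in $\Phi_I$; and the characteristic-$2$ multinomial coefficients $c_J$ (controlled by Lucas's theorem) must align with the desired link shape. A natural candidate is to load the multiplicities of $I$ onto the vertices of $K_0$ so as to absorb the odd parts of $\bm{a}-2\bm{b}$, and to fill the remaining positions with a generic rainbow completion in $\Delta'$; verifying that the resulting $\Phi_I$ takes the claimed link form modulo terms annihilated by $\Psi$ is the crux, and will require careful bookkeeping of binary carries alongside the standard comparison between $A/\mathrm{ann}_A(x_{K_0})$ and the Artinian reduction of $\widetilde{\mathbbm{k}}[\Delta']$.
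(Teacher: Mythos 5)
Your approach is genuinely different from the paper's, and it contains a real gap. The paper does not induct on $d$ or pass to links at all. Instead it first proves an auxiliary ``weak Lefschetz plus anisotropy'' statement (Lemma~\ref{lemma:WLP anis}): for any $S\subseteq[m]$ and $\bm{b}$ with $2\bm{b}+\bm{e}_S\leq\bm{a}$, the form $g\mapsto g^2\bm{\ell}^{\bm{e}_S}$ does not vanish on nonzero $g$. The point of restricting to the squarefree Lefschetz monomial $\bm{\ell}^{\bm{e}_S}=\sum_{U\in V_S}x_U$ is that, once one fixes a $\kappa$-transversal $I$ and a sequence $J$ (extracted from a perfect square $x_K^2$ produced via multigraded Poincar\'e duality), there is a \emph{unique} squarefree $x_{U'}$ with $\sqrt{x_Ix_{U'}x_J}\neq 0$, so the sum $\sum_{U\in V_S}\Psi(g\sqrt{x_Ix_Ux_J})^2$ collapses to a single nonzero term $\Psi(gx_K)^2$. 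Theorem~\ref{thm:main anis} then follows by applying this repeatedly to $g\prod_j\ell_j^{\lfloor(a_j-2b_j)/2\rfloor}$ with $S$ the set of odd coordinates of $\bm{a}-2\bm{b}$, each round halving the Lefschetz power, until one reaches $g=0$.

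The gap in your argument is precisely where this cleverness is needed. When you expand $\bm{\ell}^{\bm{a}-2\bm{b}}=\sum_J c_Jx_J$ directly and differentiate, the monomials $x_J$ are no longer squarefree (they are of multidegree $\bm{a}-2\bm{b}$, which can be large), so the uniqueness that makes $\Phi_I$ collapse fails: for a fixed $I$, many $J$ can make $x_Ix_J$ a perfect square, and there is no reason for the resulting sum to reduce to a single term or to localize at $x_{K_0}$. Your proposed fix --- engineering $I$ so that all surviving monomials are divisible by $x_{K_0}$ and the quotient becomes $\bm{\ell}_{\Delta'}^{\bm{a}-\bm{b}}$ on the link --- is the step you explicitly flag as unverified, and it is exactly where the difficulty sits. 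In particular, divisibility of $\sqrt{x_Ix_J}$ by $x_{K_0}$ does not kill cross-terms $x_K\sqrt{x_Ix_J}$ for $K\neq K_0$: those monomials may still have support inside a facet of $\Delta$, so $\Psi$ need not annihilate them. Moreover, identifying $\Phi_I/x_{K_0}$ with $\bm{\ell}_{\Delta'}^{\bm{a}-\bm{b}}$ modulo $\ker\Psi$ requires matching the Lucas-theorem coefficients $c_J$ against the link's own Lefschetz power, plus a compatible choice of the link's point configuration; none of this is carried out. Compare this with the paper's route, which avoids links entirely: the Poincar\'e duality pairing in $A$ (not in a link) hands you a monomial $x_K$ that is already ``correct'' for the differential operator, and the reduction to the squarefree case $\bm{e}_S$ removes all the ambiguity in the square-root computation. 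If you want a working proof along your lines, you would at minimum need to reduce to the $\bm{e}_S$ case first (where the collapse is automatic) or supply the missing combinatorial lemma about choosing $I$.
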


Toward the proof of Theorem~\ref{thm:main anis}, we first prove an auxiliary lemma, which can be seen as the combination of a multigraded version of weak Lefschetz property and anisotropy.
\begin{lemma} \label{lemma:WLP anis}
Let $(\Delta,\kappa)$, $\bm{a}$, $\mathbbm{k}$, $A$, $\ell_j$ be as in Theorem~\ref{thm:main anis}.
Let $S$ be a (possibly empty) subset of $[m]$ and let $\bm{e}_S=\sum_{j \in S} \bm{e}_j \in \mathbb{N}^m$ be the characteristic vector of $S$.
For $\bm{b} \in \mathbb{N}^m$ with $2\bm{b}+\bm{e}_S \leq \bm{a}$, define the bilinear form $\mathcal{Q}' : A_{\bm{b}}\times A_{\bm{b}} \rightarrow A_{2\bm{b}+\bm{e}_S}$ by
\[
\mathcal{Q}'(g,h)=gh \bm{\ell}^{\bm{e}_S}.
\]
Then $\mathcal{Q}'(g,g)\neq 0$ for any nonzero $g \in A_{\bm{b}}$,.
\end{lemma}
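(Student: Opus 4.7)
The strategy is to exhibit a monomial $x_H \in A_{\bm{a}-2\bm{b}-\bm{e}_S}$ together with a $\kappa$-transversal sequence $I$ of length $d$ for which the partial derivative $\partial_I \Psi(g^2 \bm{\ell}^{\bm{e}_S} x_H)$ evaluates to an explicit nonzero square; multigraded Poincar\'e duality then yields $g^2 \bm{\ell}^{\bm{e}_S} \neq 0$.

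First, I use multigraded Poincar\'e duality on $A_{\bm{b}} \times A_{\bm{a}-\bm{b}}$: since $g \neq 0$, there is a monomial $x_K$ of degree $\bm{a}-\bm{b}$ with $\Psi(g x_K) \neq 0$. The hypothesis $2b_j + [j \in S] \leq a_j$ forces $a_j - b_j \geq 1$ for every $j \in S$, so $K$ has at least one vertex of each color $j \in S$; fix a choice $v_j \in K \cap \kappa^{-1}(j)$ for each such $j$.

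Next, writing $k_v$ for the multiplicity of $v$ in $K$, I would define the multiset $M$ by assigning multiplicity $2k_v$ at every $v \notin \{v_j : j \in S\}$ and multiplicity $2k_{v_j}-1$ at each $v_j$. A color-by-color count shows $|M \cap \kappa^{-1}(j)| = 2(a_j - b_j) - [j \in S]$, which (using $2b_j + [j \in S] \leq a_j$) permits a disjoint decomposition $M = I \sqcup H$ in which $I$ carries $a_j$ vertices of color $j$ (and can thus be arranged into a $\kappa$-transversal sequence of length $d$) while $H$ carries $a_j - 2b_j - [j \in S]$ vertices of color $j$, so that $x_H$ lies in degree $\bm{a}-2\bm{b}-\bm{e}_S$ as required.

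Finally, expanding $\bm{\ell}^{\bm{e}_S} = \sum_{(w'_j)} \prod_{j \in S} x_{w'_j}$ (where the sum ranges over tuples $w'_j \in \kappa^{-1}(j)$) and applying Corollary~\ref{cor:diff general} term by term gives
\[
\partial_I \Psi\!\left(g^2 \bm{\ell}^{\bm{e}_S} x_H\right) = \sum_{(w'_j)} \Psi\!\left(g\,\sqrt{x_I\, x_H \prod_{j \in S} x_{w'_j}}\right)^{2},
\]
with the convention that the square root vanishes on non-square monomials. A parity analysis of the multiplicities in $M \cup \{w'_j\}_{j \in S}$ shows that the argument under the square root is a perfect square exactly when $w'_j = v_j$ for every $j \in S$, in which case that argument equals $x_{K \cup K}$ and its square root equals $x_K$. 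Hence the right-hand side collapses to $\Psi(g x_K)^{2} \neq 0$, forcing $\Psi(g^2 \bm{\ell}^{\bm{e}_S} x_H) \neq 0$ and completing the proof.

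\textbf{Main obstacle.}
The delicate point is the construction of $M$ so that, simultaneously, its multiplicities respect $m_v \leq 2k_v$, its color-degree vector $(2(a_j-b_j) - [j \in S])_j$ admits a $\kappa$-transversal split $I \sqcup H$, and its parities force exactly one tuple $(w'_j)$ in the expansion of $\bm{\ell}^{\bm{e}_S}$ to contribute a perfect square. The prescription "reduce multiplicity by one at each $v_j$" is exactly what eliminates competing tuples via parity; verifying that the hypothesis $2b_j + [j \in S] \leq a_j$ is precisely the condition making all these constraints compatible is the heart of the argument.
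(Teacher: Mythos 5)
Your proposal is correct and follows essentially the same route as the paper's proof: use multigraded Poincar\'e duality to find a degree $\bm{a}-\bm{b}$ monomial $x_K$ pairing nontrivially with $g$, peel off one vertex of each color in $S$ from $x_K^2$ to form $U^*$ (your $\{v_j\}$), split the remaining multiset into a $\kappa$-transversal $I$ and a remainder $J$ (your $H$), and invoke Corollary~\ref{cor:diff general} together with a parity argument showing that exactly the tuple $U=U^*$ survives in the expansion of $\bm{\ell}^{\bm{e}_S}$. Your exposition just makes the decomposition (via the multiset $M$) and the parity argument more explicit than the paper's terser version, but the underlying steps and key lemma are identical.
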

\begin{proof}
Suppose that $g$ is a nonzero element of $A_{\bm{b}}$.
As $A_{\bm{a}-\bm{b}}$ is generated by monomials, by multigraded Poincar\'{e} duality of $A$, there is a monomial $x_K$ of degree $\bm{a}-\bm{b}$ such that $gx_K \neq 0$ in $A_{\bm{a}}$. Its square $x_K^2$ is of degree $2\bm{a}-2\bm{b}$, where $2\bm{a}-2\bm{b} \geq \bm{a}+\bm{e}_S$ by assumption.
Hence there is a $\kappa$-transversal sequence $I$ and a set of vertices $U^* \in V_S:=\prod_{j \in S} \kappa^{-1}(j)$ and a length $d-2|\bm{b}|-|S|$ sequence of vertices $J$ satisfying $x_K^2=x_Ix_{U^*}x_J$\footnote{Since, for any degree $\bm{a}$ monomial $x_L$, $L$ can be reordered into $\kappa$-transversal sequence, the desired decomposition $x_K^2=x_Ix_Ux_J$ is obtained by assigning variables in greedy way.}.

Now we have the following identity:
\begin{align}
\partial_I\Psi(\mathcal{Q}'(g,g)x_J) &= \sum_{U \in V_S} \partial_I \Psi(g^2x_Ux_J) & \text{(by linearity of $\Psi$, $\partial_I$ and $\bm{\ell}^{\bm{e}_S}=\sum_{U \in V_S} x_U$)} \notag \\
&=\sum_{U \in V_S} \Psi(g\sqrt{x_Ix_Ux_J})^2 &\text{(by Corollary~\ref{cor:diff general})} \notag \\
&\overset{(*)}{=}\Psi(g\sqrt{x_Ix_{U^*}x_J})^2 = \Psi(g x_K)^2 \label{eq:WLP}
\end{align}
Here, in $(*)$, we are using the fact that, by the definition of square root, for a fixed monomial $x_Ix_J$, there is a unique squarefree monomial $x_{U'}$ with $\sqrt{x_Ix_{U'}x_J} \neq 0$.
By our choice of $U^*$, this is achieved by $x_{U'}=x_{U^*}$. 
As monomials $x_U$ for $U \in V_S$ are all distinct and squarefree, the equality $(*)$ holds.
Now, $g x_K$ is a nonzero element in $A_{\bm{a}}$ and $\Psi$ is an isomorphism, so we have $\Psi(g x_K)^2 \neq 0$.
Hence, by the identity (\ref{eq:WLP}), $\partial_I\Psi(\mathcal{Q}'(g,g) x_J)$ must be nonzero. Therefore $\mathcal{Q}'(g,g)$ is nonzero.
\end{proof}

Now we are ready to prove Theorem~\ref{thm:main anis}.
\begin{proof}[Proof of Theorem~\ref{thm:main anis}]
Suppose that $\mathcal{Q}(g,g)=0$ holds for $g \in A_{\bm{b}}$. As $\Psi$ is an isomorphism, we have $g^2\bm{\ell}^{\bm{a}-2\bm{b}}=0$.
By applying Lemma~\ref{lemma:WLP anis} for 
\[
g\prod_{j \in [m]} \ell_j^{\left\lfloor \frac{a_j-2b_j}{2}\right\rfloor}
\]
and $S=\{j \in [m]:a_j-2b_j \text{ is odd}\}$, we have
\begin{equation}
g\prod_{j \in [m]} \ell_j^{\left\lfloor \frac{a_j-2b_j}{2}\right\rfloor}=0. \label{eq:main proof}
\end{equation}
By multiplying $g$ to both sides of (\ref{eq:main proof}), we obtain  
\[
g^2\prod_{j \in [m]} \ell_j^{\left\lfloor \frac{a_j-2b_j}{2}\right\rfloor}=0.
\]
By repeating in this way, we can reduce the power of $\ell_j$s' and we eventually obtain $g=0$.
\end{proof}
Now Theorem~\ref{thm:main multiHL} for characteristic $2$ is immediate.
\begin{proof}[Proof of Theorem~\ref{thm:main multiHL} for characteristic $2$]
Suppose that the field $\mathbbm{k}$ is of characteristic $2$.
Define the Lefschetz elements $\ell_j$ for $j=1,\ldots,m$ as in Theorem~\ref{thm:main anis}.
Then, Theorem~\ref{thm:main anis} implies that the linear map $\times \bm{\ell}^{\bm{a}-2\bm{b}}:A_{\bm{b}} \rightarrow A_{\bm{a}-\bm{b}}$ is injective for every $\bm{b} \leq \frac{\bm{a}}{2}$.
By multigraded Poincar\'{e} duality of $A$, we have $\dim A_{\bm{b}}=\dim A_{\bm{a}-\bm{b}}$, and thus the map is an isomorphism.
\end{proof}
Theorem~\ref{thm:flag-h} is readily obtained as a corollary of Theorem~\ref{thm:main multiHL}.
\begin{proof}[Proof of Theorem~\ref{thm:flag-h}]
By Theorem~\ref{thm:main multiHL} over a field $\mathbbm{k}$ of characteristic $2$, the composite
\[
A_{\bm{b}} \overset{\times \bm{\ell}^{\bm{c}-\bm{b}}}{\longrightarrow} A_{\bm{c}} \overset{\times \bm{\ell}^{\bm{a}-\bm{b}-\bm{c}}}{\longrightarrow} A_{\bm{a}-\bm{b}}
\]
is an isomorphism. So, the linear map $\times \bm{\ell}^{\bm{c}-\bm{b}}:A_{\bm{b}}\rightarrow A_{\bm{c}}$ is injective.
Thus, $h_{\bm{b}} = \dim A_{\bm{b}} \leq \dim A_{\bm{c}} = h_{\bm{c}}$ holds.
\end{proof}
Taking the weighted sum of Theorem~\ref{thm:flag-h}, we can prove that the $h$-vector of an $\bm{a}$-balanced homology sphere is multiplicatively increasing ``at appropriate ends''. 
More precisely, we have the following.
\begin{corollary} \label{cor:h-vec}
For an $\bm{a}$-balanced homology sphere $(\Delta,\kappa)$ over $\mathbb{F}_2$ with $\bm{a}=(a_1,\ldots,a_m) \in \mathbb{N}^m_+$, 
\[
\frac{h_i}{\binom{m+i-1}{i}} \leq \frac{h_{i+1}}{\binom{m+i}{i+1}}
\]
holds for every nonnegative integer $i \in \mathbb{N}$ with $i \leq \min_{j=1}^m \frac{a_j-1}{2}$.
\end{corollary}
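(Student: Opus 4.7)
The plan is to deduce this inequality from Theorem~\ref{thm:flag-h} by summing flag-$h$ inequalities with a carefully chosen weight. Using the identity $\binom{m+i}{i+1} = \tfrac{m+i}{i+1}\binom{m+i-1}{i}$, the target inequality is equivalent to
\[
(i+m)\,h_i \;\leq\; (i+1)\,h_{i+1},
\]
and I will work with the refinement $h_j = \sum_{\bm{b}\in \mathbb{N}^m,\,|\bm{b}|=j,\,\bm{b}\leq \bm{a}} h_{\bm{b}}$.

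First, I would observe that under the hypothesis $i \leq \min_j \tfrac{a_j-1}{2}$, for every $\bm{b}\in\mathbb{N}^m$ with $|\bm{b}|=i$ and every $j\in[m]$, setting $\bm{c}=\bm{b}+\bm{e}_j$ one has $\bm{b}\leq\bm{c}\leq\bm{a}-\bm{b}$. Indeed, the second inequality amounts to $2b_k\leq a_k$ for all $k\neq j$ and $2b_j+1\leq a_j$, both of which follow at once from $b_k\leq i$ and $2i+1\leq a_k$. Thus Theorem~\ref{thm:flag-h} applies and yields $h_{\bm{b}}\leq h_{\bm{b}+\bm{e}_j}$ for all such pairs $(\bm{b},j)$.

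Next, I would multiply each such inequality by the weight $c_j = b_j+1$ and sum over all pairs $(\bm{b},j)$ with $|\bm{b}|=i$ and $j\in[m]$. On the lower-degree side, the coefficient of each $h_{\bm{b}}$ is $\sum_{j=1}^m (b_j+1) = i+m$, yielding $(i+m)\,h_i$. On the upper-degree side, reindexing by $\bm{c} = \bm{b}+\bm{e}_j$, each $\bm{c}$ with $|\bm{c}|=i+1$ appears with coefficient $\sum_{j\in \supp(\bm{c})} c_j = |\bm{c}| = i+1$, yielding $(i+1)\,h_{i+1}$. This produces exactly the reformulated inequality $(i+m)\,h_i \leq (i+1)\,h_{i+1}$.

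The main (minor) subtlety is the choice of weight. The naive uniform weighting $\alpha_{\bm{b},j}\equiv 1$ fails because, although each $\bm{b}$ with $|\bm{b}|=i$ has exactly $m$ successors, each $\bm{c}$ with $|\bm{c}|=i+1$ has only $|\supp(\bm{c})|$ predecessors, a quantity that varies with $\bm{c}$. The weight $c_j = b_j+1$ is precisely what neutralizes this asymmetry, producing the constants $i+m$ and $i+1$ on the two sides whose ratio matches $\binom{m+i}{i+1}/\binom{m+i-1}{i} = (m+i)/(i+1)$, as required.
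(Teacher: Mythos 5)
Your proposal is correct and follows essentially the same route as the paper: both deduce the reformulated inequality $(m+i)h_i \leq (i+1)h_{i+1}$ from Theorem~\ref{thm:flag-h} by weighting each inequality $h_{\bm{b}} \leq h_{\bm{b}+\bm{e}_j}$ by $b_j+1$ and summing, with the reindexing giving coefficient $i+m$ on the left and $i+1$ on the right. Your added remarks explaining why the uniform weight fails and how the chosen weight cancels the varying number of predecessors are a useful elaboration, but the mechanism is identical to the paper's.
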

\begin{proof}[Proof]
For any $\bm{b} \in \mathbb{N}^m$ with $|\bm{b}|=i$ and any $j \in[m]$, $2\bm{b}+\bm{e}_j \leq \bm{a}$ holds by the assumption $i \leq \min_{j=1}^m \frac{a_j-1}{2}$.
So, by Theorem~\ref{thm:flag-h}, we have $h_{\bm{b}} \leq h_{\bm{b}+\bm{e}_j}$.
Now the desired inequality follows from
\[
(m+i)\sum_{|\bm{b}|=i}h_{\bm{b}} = \sum_{|\bm{b}|=i}\sum_{j=1}^m (b_j+1) h_{\bm{b}} \leq \sum_{|\bm{b}|=i}\sum_{j=1}^m (b_j+1) h_{\bm{b}+\bm{e}_j} =(i+1)\sum_{|\bm{c}|=i+1}h_{\bm{c}}
\]
and the equality $h_{i'}=\sum_{|\bm{b}|=i'} h_{\bm{b}}$ for $i'=i,i+1$.
\end{proof}
We remark that, although balanced GLBI can be obtained from Theorem~\ref{thm:flag-h}, Corollary~\ref{cor:h-vec} is not a generalization of balanced GLBI (for when $\bm{a}=\bm{1}$ Corollary~\ref{cor:h-vec} only yields a trivial inequality $h_0 \leq h_1/m$).
There are other examples in which we can obtain the inequality about the $h$-vector from Theorem~\ref{thm:flag-h} by the same technique of grouping some colors and taking weighted sum as in~\cite{JM,KN}. 
For example, we can prove $\frac{k}{2} h_1 \leq h_2$ for a $2\bm{1}_k$-balanced simplicial sphere.
There still remain many open cases about the behavior of the $h$-vector of an $\bm{a}$-balanced simplicial sphere ``around the middle''.

\section{From characteristic $2$ to characteristic $0$}
In this section, we prove the multigraded strong Lefschetz property (Theorem~\ref{thm:main multiHL}) over a field of characteristic $0$ based on the result over a field of characteristic $2$.
Though the argument in this section may be well-known, we include it for completeness.

We start with a lemma about the basis (see also~\cite[Lemma 5.1]{KX}).
\begin{lemma} \label{lem:basis}
Let $(\Delta,\kappa)$ be an $\bm{a}$-balanced homology sphere over $\mathbb{F}_2$ and let $\mathbbm{k}$ be a field of characteristic $0$.
Let $\mathcal{B}$ be a set of monomials that forms a basis of the generic $\mathbb{N}^m$-graded Artinian reduction $\widetilde{\mathbb{F}_2}[\Delta]/(\Theta)$ of $\mathbb{F}_2[\Delta]$.
Then, $\mathcal{B}$ also forms a basis of the generic $\mathbb{N}^m$-graded Artinian reduction $A=\widetilde{\mathbbm{k}}[\Delta]/(\Theta)$ of $\mathbbm{k}[\Delta]$.
\end{lemma}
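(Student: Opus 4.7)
The plan is to separate the argument into a dimension count and a spanning statement, with the latter reduced to a rank comparison for an integer-coefficient matrix across characteristics.

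First I would establish $\dim_{\widetilde{\mathbbm{k}}} A_{\bm{b}} = h_{\bm{b}} = |\mathcal{B}_{\bm{b}}|$ for every $\bm{b} \leq \bm{a}$. The equality $|\mathcal{B}_{\bm{b}}| = h_{\bm{b}}$ holds because $\Delta$ is Cohen-Macaulay over $\mathbb{F}_2$ as a homology sphere. For the other equality, I would check that $\Delta$ is also Cohen-Macaulay over $\mathbbm{k}$: by the universal coefficient theorem, the vanishing $\widetilde{H}_i(\lk_\sigma \Delta;\mathbb{F}_2) = 0$ for $i < \dim \lk_\sigma \Delta$ forces $\widetilde{H}_i(\lk_\sigma \Delta;\mathbb{Z})$ to be a torsion group with no free part and no $2$-torsion, hence $\widetilde{H}_i(\lk_\sigma \Delta;\mathbbm{k}) = 0$ in characteristic $0$. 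Reisner's criterion then yields Cohen-Macaulayness of $\Delta$ over $\mathbbm{k}$, whence the claimed dimension equality.

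Given the dimension match, it suffices to show that $\mathcal{B}_{\bm{b}}$ spans $A_{\bm{b}}$. I would reformulate spanning as a rank condition on an integer-coefficient matrix. For a field $F$, let $\mathrm{Mon}_{\bm{b}}$ denote the fine-degree $\bm{b}$ monomials supported in $\Delta$, so that $F(p_{k,v})[\Delta]_{\bm{b}} \cong F(p_{k,v})^{\mathrm{Mon}_{\bm{b}}}$; then the relations subspace $R^F_{\bm{b}}$ coming from $(\Theta)$ is the column span of an explicit matrix $N$ whose entries lie in $\{0\} \cup \{p_{k,v}\} \subseteq \mathbb{Z}[p_{k,v}]$, so $N$ is literally the same integer matrix for both $F = \mathbb{F}_2$ and $F = \mathbbm{k}$. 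Letting $N'$ denote the submatrix of $N$ with rows restricted to $\mathrm{Mon}_{\bm{b}} \setminus \mathcal{B}_{\bm{b}}$, the direct-sum decomposition of the ambient space along $\mathcal{B}_{\bm{b}}$ versus its complement shows that $\mathcal{B}_{\bm{b}}$ spans $F(p_{k,v})^{\mathrm{Mon}_{\bm{b}}}/R^F_{\bm{b}}$ if and only if $N'$ has full row rank over $F(p_{k,v})$.

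The rank comparison across characteristics then finishes the argument: every minor of $N'$ is a polynomial in $\mathbb{Z}[p_{k,v}]$, and a minor that is nonzero modulo $2$ must have a coefficient not divisible by $2$, hence is nonzero as an integer polynomial and nonzero over $\mathbbm{k}[p_{k,v}]$. Since $\mathcal{B}$ is a basis over $\widetilde{\mathbb{F}_2}$, the matrix $N'$ has full row rank over $\widetilde{\mathbb{F}_2}$, so some top-size minor is nonzero modulo $2$; that same minor is then nonzero over $\widetilde{\mathbbm{k}}$, which gives full row rank for $N'$ over $\widetilde{\mathbbm{k}}$ and hence spanning of $\mathcal{B}_{\bm{b}}$. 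Combined with the dimension count this proves $\mathcal{B}$ is a basis of $A$. The main obstacle is the first step, the Cohen-Macaulayness transfer, which requires a careful application of the universal coefficient theorem to control both the free and torsion parts of the integer homology of links; the rank-of-integer-matrix argument itself is essentially routine once the setup is in place.
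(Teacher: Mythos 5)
Your proposal is correct and takes essentially the same route as the paper: a dimension count (via Reisner's criterion and characteristic-independence of the homology-sphere property, which you re-derive from universal coefficients while the paper cites \cite{KX}) reduces the task to a rank condition on a matrix with entries in $\mathbb{Z}[p_{k,v}]$, settled by observing that a minor nonzero mod $2$ is nonzero in characteristic $0$. The paper phrases the rank condition as linear independence of the lifted elements $b\bm{\theta}^{\bm{\alpha}}$ in $\widetilde{\mathbbm{k}}[\Delta]_D$, while you phrase it as spanning via full row rank of a relations-projection matrix $N'$, but these are equivalent formulations of the same argument.
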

\begin{proof}
Let $d=|\bm{a}|$ and let $\widetilde{\mathbbm{k}}=\mathbbm{k}(p_{k,v}:(k,v) \in \mathcal{I})$, where $\mathcal{I}$ denotes the set of all new indeterminates used in the generic $\mathbb{N}^m$-generic Artinian reduction.
By~\cite[Lemma 2.1 (2)]{KX} and the fact that whether $\Delta$ is a homology sphere over a given field only depends on its characteristic, $\Delta$ is also a homology sphere over $\widetilde{\mathbbm{k}}$.
Hence, by Reisner's theorem, $\widetilde{\mathbb{F}_2}[\Delta]/(\Theta)$ and $A$ has the same dimension $h_0+\cdots+h_d$ as a $\widetilde{\mathbb{F}_2}$-vector space and a $\widetilde{\mathbbm{k}}$-vector space, respectively.
Thus, it suffices to prove that $\mathcal{B}$ is linearly independent in $A$.

Suppose on the contrary, $\mathcal{B}$ is linearly dependent in $A$.
Then, there is some number $D \in \mathbb{N}$ such that the finite set of elements
\begin{equation*}
S=\left\{b \bm{\theta}^{\bm{\alpha}}:b \in \mathcal{B}, \bm{\alpha} \in \mathbb{N}^d, \, \deg b + |\bm{\alpha}|=D \right\}
\end{equation*}
is linear dependent in $\widetilde{\mathbbm{k}}[\Delta]_D$. Here $\deg b$ denotes the degree of $b$ under the natural $\mathbb{N}$-grading and $\bm{\theta}^{\bm{\alpha}} = \prod_{j=1}^d \theta_j^{\alpha_j}$.
Let $M$ be the standard basis of $\widetilde{\mathbbm{k}}[\Delta]_D$ consisting of all monomials of degree $D$ whose support is contained in $\Delta$. 
Then, for each $s \in S$, there is unique $(t_{sm})_{m \in M}$ with $t_{sm} \in \mathbb{Z}[p_{k,v}:(k,v) \in \mathcal{I}]$ such that $s - \sum_{m \in M} t_{sm} m \in I_{\Delta}$.
Consider the $|S| \times |M|$ matrix $T=(t_{sm})$.
The linear dependence of $S$ implies that $T$ is row dependent over a field $\widetilde{\mathbbm{k}}$ of characteristic $0$.

On the other hand, since $\widetilde{\mathbb{F}_2}[\Delta]$ is a free $\widetilde{\mathbb{F}_2}[\theta_1,\ldots,\theta_d]$-module, $S$ is linearly independent in $\widetilde{\mathbb{F}_2}[\Delta]_D$.
This implies that the matrix $T \bmod 2$ is row independent over $\widetilde{F_2}$.
Thus, there is a row-full square submatrix of $T$ whose determinant (as a polynomial in $\mathbb{Z}[p_{k,v}:(k,v) \in \mathcal{I}]$) is nonzero modulo $2$. 
This contradicts to the fact that $T$ is row dependent over a field $\widetilde{\mathbbm{k}}$ of characteristic $0$.
\end{proof}
We now prove Theorem~\ref{thm:main multiHL} for a field of characteristic $2$.
\begin{proof}[Proof of Theorem~\ref{thm:main multiHL} for a characteristic $0$ field]
Let $\mathbbm{k}$ be a field of characteristic $0$ and let $A=\widetilde{\mathbbm{k}}[\Delta]/(\Theta)$.
Define Lefschetz elements as in Theorem~\ref{thm:main anis}.
As discussed in the proof Lemma~\ref{lem:basis}, $\Delta$ is also a homology sphere over $\widetilde{\mathbbm{k}}$.
Hence $\mathbbm{k}[\Delta]$ is a free $\mathbbm{k}[\theta_1,\ldots,\theta_d]$-module and $\dim A_{\bm{b}}=\dim A_{\bm{a}-\bm{b}}$ holds.
So it suffices to prove the injectivity of the linear map $\times\bm{\ell}^{\bm{a}-2\bm{b}}:A_{\bm{b}} \rightarrow A_{\bm{a}-\bm{b}}$.

For this, pick a basis $\mathcal{B}$ of $\widetilde{\mathbb{F}_2}[\Delta]/(\Theta)$ consisting of monomials (such a basis can be obtained by ordering all the monomials of $\mathbb{F}_2[\bm{x}]$ of degree at most $d=|\bm{a}|$ and choosing linearly independent ones in a greedy manner).
By Lemma~\ref{lem:basis}, $\mathcal{B}$ is also a basis for $A$. 
Let $\{m_1,\ldots,m_k\}=\mathcal{B} \cap \widetilde{\mathbbm{k}}[\bm{x}]_{\bm{b}}$ be a basis of $A_{\bm{b}}$.
Then $\times\bm{\ell}^{\bm{a}-2\bm{b}}:A_{\bm{b}} \rightarrow A_{\bm{a}-\bm{b}}$ is injective if and only if the set of polynomials $\mathcal{S}=\{m_l\bm{\ell}^{\bm{a}-2\bm{b}} \bm{\theta}^{\bm{\alpha}}: l=1,\ldots,k, \bm{\alpha} \in \mathbb{N}^d\}$ is linearly independent in $\widetilde{\mathbbm{k}}[\Delta]$.
By Theorem~\ref{thm:main multiHL} for a field of characteristic $2$, we know that $\mathcal{S}$ is linearly independent in $\widetilde{\mathbb{F}_2}[\Delta]$. 
By the similar argument as in the proof of Lemma~\ref{lem:basis}, this implies the linear independence of $\mathcal{S}$ in $\widetilde{\mathbbm{k}}[\Delta]$.
Thus the map $\times\bm{\ell}^{\bm{a}-2\bm{b}}:A_{\bm{b}} \rightarrow A_{\bm{a}-\bm{b}}$ is injective.
\end{proof}

\section{Manifolds, simplicial cycles, 2-CM complexes}
\subsection{Manifolds}
For homology manifolds over $\mathbb{F}_2$, we have the following theorem, which can be seen as a multigraded version of almost strong Lefschetz property.
\begin{theorem} \label{thm:manifold almost HL}
For $\bm{a} \in \mathbb{N}^m_+$, let $(\Delta,\kappa)$ be an $\bm{a}$-balanced homology manifold over $\mathbb{F}_2$.
For a field $\mathbbm{k}$ of characteristic $0$ or $2$, let $A=\widetilde{\mathbbm{k}}[\Delta]/(\Theta)$ be the generic $\mathbb{N}^m$-graded Artinian reduction of $\mathbbm{k}[\Delta]$, and let $\ell_j$ be a generic element in $A_{\bm{e}_j}$ for $j=1,\ldots,m$.
Then, for any $\bm{b} \in \mathbb{N}^m$ and $j \in [m]$ with $\bm{b} \leq \frac{\bm{a}-\bm{e}_j}{2}$, the multiplication map
\[
\times \bm{\ell}^{\bm{a}-2\bm{b}-\bm{e}_j} : A_{\bm{b}+\bm{e}_j} \to A_{\bm{a}-\bm{b}}
\]
is surjective.
\end{theorem}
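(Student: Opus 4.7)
The plan is to adapt the anisotropy argument used in Section~5 to manifolds by exploiting the fact that, for an $\bm{a}$-balanced homology manifold $\Delta$ over $\mathbb{F}_2$, every vertex link is an $(\bm{a}-\bm{e}_{\kappa(v)})$-balanced homology sphere over $\mathbb{F}_2$ to which Theorem~\ref{thm:main multiHL} (more precisely Theorem~\ref{thm:main anis}) already applies. As a preliminary reduction, I would reduce the characteristic-$0$ case to the characteristic-$2$ case exactly as in Section~6: a homology manifold is Cohen-Macaulay over any field where all vertex links are CM, so a basis-lift argument in the spirit of Lemma~\ref{lem:basis} lets us transfer surjectivity of the multiplication maps from $\mathbb{F}_2$ to characteristic $0$.

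Henceforth working in characteristic $2$, the heart of the argument is a link-based ``anisotropy on the image.'' Fix $\bm{b} \leq \tfrac{\bm{a}-\bm{e}_j}{2}$ and set $W = \bm{\ell}^{\bm{a}-2\bm{b}-\bm{e}_j}\,A_{\bm{b}+\bm{e}_j} \subseteq A_{\bm{a}-\bm{b}}$. Surjectivity means $W = A_{\bm{a}-\bm{b}}$. Since $\Delta$ is a connected orientable (over $\mathbb{F}_2$, automatic) homology manifold, we still have $A_{\bm{a}} \cong \widetilde{\mathbbm{k}}$ and hence a well-defined evaluation $\Psi$ and a (possibly degenerate) multigraded pairing $A_{\bm{b}} \times A_{\bm{a}-\bm{b}} \to A_{\bm{a}}$. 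To prove $W = A_{\bm{a}-\bm{b}}$ it suffices to show that any $z \in A_{\bm{b}}$ with $\Psi(z \cdot W) = 0$ satisfies $\Psi(z \cdot A_{\bm{a}-\bm{b}}) = 0$. Expanding, the hypothesis reads $\Psi\!\left(z\, x_v\, \bm{\ell}^{\bm{a}-2\bm{b}-\bm{e}_j}\, h\right) = 0$ for every vertex $v$ with $\kappa(v)=j$ and every $h \in A_{\bm{b}}$. The key observation is that such an evaluation only depends on $z$ modulo the Stanley-Reisner relations in $\st_v(\Delta)$, and via Lee's formula it matches (up to a nonzero scalar, coming from ``residue at $v$'') the evaluation map $\Psi_v$ on the generic $\mathbb{N}^m$-graded Artinian reduction $A(L_v)$ of the link sphere $L_v = \lk_v(\Delta)$. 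Concretely, I expect an identity of the shape
\[
\Psi_\Delta\!\left(z\, x_v\, \bm{\ell}^{\bm{a}-2\bm{b}-\bm{e}_j}\, h\right) \;=\; c_v \cdot \Psi_v\!\left(\bar z\, \bar h\, \bm{\ell}^{(\bm{a}-\bm{e}_j)-2\bm{b}}\right),
\]
where $\bar z, \bar h \in A(L_v)_{\bm{b}}$ denote the images of $z,h$ under the canonical restriction $A(\Delta) \to A(L_v)$ and $c_v \in \widetilde{\mathbbm{k}}^\times$. Applied with $h = z$ and combined with the anisotropy of $\Psi_v$ on $A(L_v)_{\bm{b}}$ (Theorem~\ref{thm:main anis} for $L_v$), the hypothesis forces $\bar z = 0$ in $A(L_v)$ for every such $v$, which in turn gives $z\, x_v = 0$ in $A(\Delta)$ and hence $z\, \ell_j = 0$ (summing over $v$ of color $j$).

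Having established $z \ell_j = 0$, I would finish by an iteration on the exponent of $\ell_j$ modelled on the end of the proof of Theorem~\ref{thm:main anis}: multiplying through by $z$ and $\bm{\ell}$, applying Lemma~\ref{lemma:WLP anis} (or its link-twisted analogue) to progressively strip off powers of each $\ell_k$, one concludes that $z$ annihilates $A_{\bm{a}-\bm{b}}$ under the top pairing, completing the proof that $W = A_{\bm{a}-\bm{b}}$. The main obstacle I foresee is establishing the residue-compatibility identity relating $\Psi_\Delta$ and $\Psi_v$ in the multigraded setting: in the $\mathbb{N}$-graded situation this is essentially a direct consequence of Lee's formula (the sum over facets in (\ref{eq:eval}) containing a fixed vertex factors as a sum over facets of the link), but in the $\mathbb{N}^m$-grading one must verify that the specialization sending $p_{k,v'}=0$ whenever $k \notin \mathcal{I}_{\kappa(v')}$ is compatible with passing to $A(L_v)$, and track that the induced coloring on $L_v$ really yields the generic $\mathbb{N}^m$-graded Artinian reduction of the link over the same field extension $\widetilde{\mathbbm{k}}$.
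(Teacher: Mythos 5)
Your high-level idea -- reduce the manifold statement to the sphere statement by passing to vertex links -- is exactly what the paper does, and your observation that the vertex links are $(\bm{a}-\bm{e}_j)$-balanced homology spheres to which Theorem~\ref{thm:main multiHL} applies is correct. However, the way you try to extract surjectivity has a genuine gap.

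The problematic step is: ``To prove $W = A_{\bm{a}-\bm{b}}$ it suffices to show that any $z \in A_{\bm{b}}$ with $\Psi(z \cdot W) = 0$ satisfies $\Psi(z \cdot A_{\bm{a}-\bm{b}}) = 0$.'' This duality argument is only valid when the multigraded pairing $A_{\bm{b}} \times A_{\bm{a}-\bm{b}} \to A_{\bm{a}}$ is nondegenerate, i.e.\ when $A$ is Gorenstein. For a homology manifold that is not a sphere, that pairing is degenerate precisely because of the interior socle: by the manifold analogue of Schenzel's formula (Lemma~\ref{lem:flag h', h''}), $\dim \Soc(A)_{\bm{a}-\bm{b}} = \binom{\bm{a}}{\bm{b}}\widetilde{\beta}_{|\bm{a}-\bm{b}|-1}$, which is nonzero when the relevant Betti number is nonzero, and every element of $\Soc(A)_{\bm{a}-\bm{b}}$ is annihilated by $A_{\bm{b}}$ when $\bm{b} \neq \bm{0}$. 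Writing $V_0 \subseteq A_{\bm{a}-\bm{b}}$ for the right kernel of the pairing, what your argument shows (even granting the residue identity) is $W^\perp = (A_{\bm{a}-\bm{b}})^\perp$, which is equivalent to $W + V_0 = A_{\bm{a}-\bm{b}}$ and \emph{not} to $W = A_{\bm{a}-\bm{b}}$. Nothing in the argument forces $V_0 \subseteq W$, so surjectivity does not follow. (A secondary concern: your proposed characteristic-$0$ transfer via a Lemma~\ref{lem:basis}-style basis lift relies on free-module-over-$\mathbbm{k}[\Theta]$ structure, which a non-Cohen-Macaulay manifold does not have; the paper avoids this by deducing the theorem entirely from Theorem~\ref{thm:main multiHL} applied to the links, where Cohen-Macaulayness is available.)

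The paper's proof bypasses duality entirely. It establishes two elementary lemmas: a multigraded cone lemma giving $A(\lk_v\Delta)_* \cong A(\st_v\Delta)_*$, and a partition lemma giving a degree-shifting surjection $\bigoplus_{v \in \Delta^{(\bm{e}_j)}} A(\st_v\Delta)_* \twoheadrightarrow A(\Delta)_{*+\bm{e}_j}$ (every monomial of degree $\geq \bm{e}_j$ is divisible by some $x_v$ of color $j$). These assemble into a commuting square in which the bottom map $\bigoplus_v A(\lk_v\Delta)_{\bm{b}} \to \bigoplus_v A(\lk_v\Delta)_{\bm{a}-\bm{b}-\bm{e}_j}$ is an isomorphism by Theorem~\ref{thm:main multiHL} on the link spheres, and both vertical maps into $A(\Delta)_{\bm{b}+\bm{e}_j}$ and $A(\Delta)_{\bm{a}-\bm{b}}$ are surjective; commutativity then forces the top map to be surjective. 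This direct diagram chase is what makes the argument robust to the degeneracy of the pairing, and is the piece missing from your proposal.
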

To prove Theorem~\ref{thm:manifold almost HL}, we highlight useful conventions for star and link.
Let $\Theta$ be an l.s.o.p.~for $\mathbbm{k}[\Delta]$.
Recall that $\Theta$ is identified with a point configuration $p:V(\Delta) \rightarrow \mathbbm{k}^{d}$, where $\dim \Delta +1$.
Let $\tau \in \Delta$ be a face.
The l.s.o.p.~for the star is obtained by the restriction of $p$ to $V(\st_\tau \Delta)$, and an l.s.o.p.~for the link is obtained by the projection of $p|_{V(\st_\tau \Delta)}$ to $\mathbbm{k}^{d}/\spa p(\tau)$\footnote{To obtain an l.s.o.p.~for the link explicitly, one needs to identify $\mathbbm{k}^{d}/\spa p(\tau)$ with $\mathbbm{k}^{d-|\tau|}$. Here, up to isomorphism, the resulting Artinian reduction of $\mathbbm{k}[\lk_\tau \Delta]$ does not depend on the identification.}.
Throughout, we always assume this convention for star and link, and when $A=\mathbbm{k}[\Delta]/(\Theta)$ is the Artinian reduction of $\mathbbm{k}[\Delta]$ with respect to $\Theta$, the corresponding Artinian reduction of $\mathbbm{k}[\st_\tau \Delta]$ and $\mathbbm{k}[\lk_\tau \Delta]$ are denoted by $A(\st_\tau \Delta)$ and $A(\lk_\tau \Delta)$, respectively.
Note also that if $(\Delta,\kappa)$ is $\bm{a}$-balanced and $\Theta$ is an $\mathbb{N}^m$-graded l.s.o.p., for a face $\tau \in \Delta^{(\bm{b})}$, $\lk_\tau \Delta$ is $(\bm{a}-\bm{b})$-balanced by the restriction of $\kappa$ and the corresponding l.s.o.p.~for $\mathbbm{k}[\lk_\tau \Delta]$ is also $\mathbb{N}^m$-graded.
Here, for an $\bm{a}$-balanced simplicial complex $(\Delta,\kappa)$, we denote $\Delta^{(\bm{b})}=\{\tau\in\Delta:|\tau \cap \kappa^{-1}(j)|=b_j \text{ for all }j\in [m]\}$.
We have the following lemmas.
\begin{lemma}[Cone lemma] \label{lem:cone}
Let $(\Delta,\kappa)$ be an $\bm{a}$-balanced simplicial complex with $\bm{a}\in\mathbb{N}^m$ and let $\Theta$ be an $\mathbb{N}^m$-graded l.s.o.p.~for $\mathbbm{k}[\Delta]$.
Let $A=\mathbbm{k}[\Delta]/(\Theta)$ be the Artinian reduction of $\mathbbm{k}[\Delta]$ with respect to $\Theta$.
Then for any vertex $v \in V(\Delta)$, there is a degree preserving isomorphism of $\mathbb{N}^m$-graded algebras
\[
A(\lk_v \Delta)_{*} \cong A(\st_v \Delta)_{*}.
\]
\end{lemma}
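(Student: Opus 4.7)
The plan is to realize $\st_v\Delta$ as a cone with apex $v$ over $\lk_v\Delta$ and then eliminate $x_v$ using one of the $\mathbb{N}^m$-homogeneous parameters in $\Theta$. First I would observe that the facets of $\st_v\Delta$ are precisely $\sigma\cup\{v\}$ for facets $\sigma\in\lk_v\Delta$, so the minimal non-faces of $\st_v\Delta$ coincide with those of $\lk_v\Delta$. This gives a degree-preserving isomorphism of $\mathbb{N}^m$-graded rings $\mathbbm{k}[\st_v\Delta]\cong\mathbbm{k}[\lk_v\Delta][x_v]$, where $x_v$ carries multidegree $\bm{e}_{\kappa(v)}$.

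Writing the restricted parameters as $\theta_k|_{\st_v\Delta}=p(v)_k\,x_v+\theta_k'$ with $\theta_k'=\sum_{u\in V(\lk_v\Delta)} p(u)_k\,x_u$, I would apply the Kind--Kleinschmidt criterion to the face $\{v\}\in\Delta$ to conclude $p(v)\neq 0$. Since $\Theta$ is $\mathbb{N}^m$-graded, $p(v)_k=0$ for $k\notin\mathcal{I}_{\kappa(v)}$, so there exists $k_0\in\mathcal{I}_{\kappa(v)}$ with $p(v)_{k_0}\neq 0$. The relation $\theta_{k_0}=0$ in $A(\st_v\Delta)$ then reads
\[
x_v=-\frac{1}{p(v)_{k_0}}\,\theta_{k_0}'\, ,
\]
an equation among variables of multidegree $\bm{e}_{\kappa(v)}$, so the elimination is manifestly $\mathbb{N}^m$-graded.

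Substituting this expression for $x_v$ into each remaining $\theta_k$ with $k\neq k_0$ produces $d-1$ linear forms $\tilde\theta_k\in\mathbbm{k}[\lk_v\Delta]$. The key identification is that these $\tilde\theta_k$ are exactly the $\mathbb{N}^m$-graded l.s.o.p.\ for $\mathbbm{k}[\lk_v\Delta]$ induced by the projection $p(u)\mapsto p(u)\bmod\spa\,p(v)$, after the identification $\mathbbm{k}^d/\spa\,p(v)\cong\mathbbm{k}^{d-1}$ via the coordinate $k_0$; by the convention fixed for $A(\lk_v\Delta)$ above, the resulting quotient is precisely $A(\lk_v\Delta)$ up to $\mathbb{N}^m$-graded isomorphism. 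Consequently, the composition $\mathbbm{k}[\lk_v\Delta]\hookrightarrow\mathbbm{k}[\st_v\Delta]\twoheadrightarrow A(\st_v\Delta)$ is a surjection of $\mathbb{N}^m$-graded algebras with kernel $(\tilde\theta_k:k\neq k_0)$, yielding the desired isomorphism $A(\lk_v\Delta)_*\cong A(\st_v\Delta)_*$.

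There is no substantial obstacle here; the only point to verify carefully is that the elimination respects the $\mathbb{N}^m$-grading, which follows from the fact that the parameter $\theta_{k_0}$ chosen for the elimination has multidegree $\bm{e}_{\kappa(v)}$, matching that of $x_v$. The independence of $A(\lk_v\Delta)$ from the choice of complement of $\spa\,p(v)$ (noted in the convention preceding the lemma) also ensures the construction does not depend on the choice of $k_0$.
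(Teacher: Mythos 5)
Your proof is correct and follows essentially the same route as the paper's: both identify $\st_v\Delta$ as the cone over $\lk_v\Delta$ (so $\mathbbm{k}[\st_v\Delta]\cong\mathbbm{k}[\lk_v\Delta][x_v]$) and then exploit the link/star l.s.o.p.\ convention, which amounts to the ideal identity $(x_v,\theta_1',\ldots,\theta_{d-1}')=(\theta_1,\ldots,\theta_d)$ in $\mathbbm{k}[x_u:u\in V(\st_v\Delta)]$. You have simply spelled out the elimination of $x_v$ explicitly (choosing $k_0\in\mathcal{I}_{\kappa(v)}$ with $p(v)_{k_0}\neq 0$ and substituting), which is a more computational rendering of the same idea; the observations that $p(v)\neq 0$ and that the degree of $\theta_{k_0}$ matches that of $x_v$ are valid and are implicit in the paper's appeal to its convention.
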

\begin{proof}
For completeness, we include the proof of this well-known lemma \footnote{See also~\cite[Theorem 7]{Lee} for a vector space isomorphism version of the statement (over $\mathbb{R}$).}.
Let $d=\dim \Delta+1$ and $R=\mathbbm{k}[x_v:v \in V(\st_v\Delta)]$.
To reflect our convention for the star, let us denote the l.s.o.p.~for $\mathbbm{k}[\st_v \Delta]$ also as $\theta_1,\ldots,\theta_d$.
By our convention for the link, for the l.s.o.p.~$\theta_1', \ldots, \theta_{d-1}'$ for $\mathbbm{k}[\lk_v \Delta]$, we have an identity of ideals $(x_v,\theta_1',\ldots,\theta_{d-1}')=(\theta_1,\ldots,\theta_d)$ in $R$.
Thus, we have a desired isomorphism since $I_{\lk_v \Delta} = (x_v) + I_{\st_v \Delta}$.
\end{proof}
\begin{lemma} \label{lem:partition}
Let $(\Delta,\kappa)$ be an $\bm{a}$-balanced simplicial complex with $\bm{a}\in\mathbb{N}^m$ and let $\Theta$ be an $\mathbb{N}^m$-graded l.s.o.p.~for $\mathbbm{k}[\Delta]$.
Let $A=\mathbbm{k}[\Delta]/(\Theta)$ be the Artinian reduction of $\mathbbm{k}[\Delta]$ with respect to $\Theta$.
Then, for each $j \in [m]$, there is a degree preserving surjection 
\[
\bigoplus_{v \in \Delta^{(\bm{e}_j)}} A(\st_v \Delta)_{*} \twoheadrightarrow A(\Delta)_{*+\bm{e}_j}.
\]
\end{lemma}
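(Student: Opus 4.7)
The plan is to define the component maps by multiplication by $x_v$ and check that they assemble into the claimed surjection. More precisely, for $v \in \Delta^{(\bm{e}_j)}$ (so $\kappa(v)=j$), I would define
\[
\phi_v : A(\st_v \Delta)_{*} \longrightarrow A(\Delta)_{*+\bm{e}_j}, \qquad [f] \longmapsto [x_v f],
\]
and take the sum $\bigoplus_v \phi_v$. Since $\deg x_v = \bm{e}_j$, each $\phi_v$ shifts the $\mathbb{N}^m$-grading by $\bm{e}_j$, so the total map is degree preserving in the sense claimed.

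The first step is to verify that $\phi_v$ is well defined. Writing $R = \mathbbm{k}[x_w : w\in V(\Delta)]$ and $R' = \mathbbm{k}[x_w : w \in V(\st_v \Delta)] \subseteq R$, one must check $x_v \cdot I_{\st_v \Delta} \subseteq I_\Delta$ and $x_v \cdot \Theta' \subseteq I_\Delta + (\Theta)$, where $\Theta'$ is the restriction of $\Theta$ to $V(\st_v\Delta)$ that provides the l.s.o.p.~for $\mathbbm{k}[\st_v \Delta]$ under our convention. For the first containment, take a generator $x_\tau$ of $I_{\st_v \Delta}$ with $\tau \subseteq V(\st_v \Delta)$ and $\tau \notin \st_v\Delta$: if $v\in\tau$ then $\tau\notin\Delta$, so $x_\tau \in I_\Delta$; if $v\notin\tau$ then $\tau\cup\{v\}\notin\Delta$, so $x_v x_\tau = x_{\tau\cup v}\in I_\Delta$. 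For the second, note that for every $w \in V(\Delta)\setminus V(\st_v\Delta)$ we have $\{v,w\}\notin\Delta$ and hence $x_v x_w \in I_\Delta$; therefore $x_v(\theta_k - \theta'_k) = x_v\sum_{w \notin V(\st_v\Delta)} p(w)_k x_w \in I_\Delta$, giving $x_v \theta'_k \equiv x_v \theta_k \pmod{I_\Delta}$, which lies in $(\Theta)+I_\Delta$. Thus $\phi_v$ descends to the Artinian reductions.

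The second step is surjectivity onto $A(\Delta)_{\bm{b}+\bm{e}_j}$ for each $\bm{b}$. The Stanley--Reisner ring is spanned by squarefree monomials supported on faces, so $A(\Delta)_{\bm{b}+\bm{e}_j}$ is spanned by the classes $[x_\sigma]$ with $\sigma \in \Delta^{(\bm{b}+\bm{e}_j)}$. Because $b_j+1 \geq 1$, each such $\sigma$ contains at least one vertex $v$ of color $j$; then $\sigma \setminus \{v\} \in \lk_v \Delta \subseteq \st_v \Delta$, so $x_{\sigma\setminus v}$ represents a class in $A(\st_v\Delta)_{\bm{b}}$, and $\phi_v([x_{\sigma\setminus v}]) = [x_v x_{\sigma\setminus v}] = [x_\sigma]$. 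This exhausts a generating set of $A(\Delta)_{\bm{b}+\bm{e}_j}$ and yields the surjection.

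I do not anticipate a substantive obstacle: the only slightly subtle point is the well-definedness on the l.s.o.p., which relies on the observation that non-neighbors of $v$ are annihilated by $x_v$ modulo $I_\Delta$. All remaining assertions are formal consequences of the Stanley--Reisner formalism together with the conventions for the star recorded just before the lemma.
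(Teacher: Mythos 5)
Your map is exactly the paper's: sum over all $v$ of color $j$ of multiplication by $x_v$, and your two well-definedness checks --- $x_v\cdot I_{\st_v\Delta}\subseteq I_\Delta$ and $x_v\theta'_k\equiv x_v\theta_k\pmod{I_\Delta}$ --- spell out precisely the step the paper leaves implicit when it says the surjection of Stanley--Reisner rings ``induces a surjection between the Artinian reductions.'' So the approach is the same.

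The one place that needs repair is the justification of surjectivity. The sentence ``the Stanley--Reisner ring is spanned by squarefree monomials supported on faces'' is false: $\mathbbm{k}[\Delta]$ is spanned by \emph{all} monomials whose support is a face, and it contains $x_v^k$ for every $k$. The assertion you actually want --- that the \emph{Artinian reduction} $A(\Delta)$ is spanned by classes of squarefree monomials --- happens to be true, but it is a nontrivial consequence of $\Theta$ being an l.s.o.p.~and certainly does not follow from the sentence as written; as it stands, that step is a false premise supporting a true conclusion. Fortunately you do not need it. As in the paper, prove surjectivity already at the Stanley--Reisner level: any monomial $x_L$ of $\mathbbm{k}[\Delta]$ (squarefree or not) whose $\mathbb{N}^m$-degree has $j$th coordinate at least $1$ is divisible by some $x_v$ with $\kappa(v)=j$; since $\supp(x_L)$ is a face containing $v$, the support of $x_L/x_v$ lies in $\st_v\Delta$, so $x_L=\phi_v(x_L/x_v)$. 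This gives $\bigoplus_v\mathbbm{k}[\st_v\Delta]_*\twoheadrightarrow\mathbbm{k}[\Delta]_{*+\bm{e}_j}$, which by your well-definedness checks descends to the claimed surjection of Artinian reductions. With that one substitution your proof is correct and coincides with the paper's.
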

\begin{proof}
For each $v \in \Delta^{(\bm{e}_j)}$, the multiplication by $x_v$ induces a map $\varphi_v: \mathbbm{k}[\st_v \Delta]_* \overset{\cdot x_v}{\rightarrow} \mathbbm{k}[\Delta]_{*+\bm{e}_j}$. Consider their sum
\[
\varphi:\bigoplus_{v \in \Delta^{(\bm{e}_j)}} \mathbbm{k}[\st_v \Delta]_* \rightarrow \mathbbm{k}[\Delta]_{*+\bm{e}_j}
\]
over all $v \in \Delta^{(\bm{e}_j)}$.
Then $\varphi$ is surjective since every monomial of $\mathbbm{k}[\bm{x}]$ with degree at least $\bm{e}_j$ is divisible by some $x_v$ with $v \in \Delta^{(\bm{e}_j)}$.
So, $\varphi$ induces a surjection between the Artinian reductions.
\end{proof}
\begin{proof}[Proof of Theorem~\ref{thm:manifold almost HL}]
We have the following commuting diagram:
\[
\begin{tikzpicture}[auto]
\node (a) at (0,1.2) {$A(\Delta)_{\bm{b}+\bm{e}_j}$}; 
\node (b) at (6,1.2) {$A(\Delta)_{\bm{a}-\bm{b}}$};
\node (c) at (0,0) {$\bigoplus_{v \in \Delta^{(\bm{e}_j)}} A(\lk_v \Delta)_{\bm{b}}$}; 
\node (d) at (6,0) {$\bigoplus_{v \in \Delta^{(\bm{e}_j)}} A(\lk_v \Delta)_{\bm{a}-\bm{b}-\bm{e}_j}$};
\draw[->] (a) to node {$\times \bm{\ell}^{\bm{a}-2\bm{b}-\bm{e}_j}$} (b);
\draw[->] (c) to node[above] {$\times \bm{\ell}^{\bm{a}-2\bm{b}-\bm{e}_j}$} node[below] {$\cong$} (d);
\draw[->>] (c) to (a);
\draw[->>] (d) to (b);
\end{tikzpicture}
\]
Here, the vertical maps are the composite of an isomorphism in Lemma~\ref{lem:cone} and a surjection in Lemma~\ref{lem:partition}.
For each $v \in \Delta^{(\bm{e}_j)}$, $(\lk_v \Delta,\kappa_{|V(\lk_\tau \Delta)})$ is an $(\bm{a}-\bm{e}_j)$-balanaced homology sphere over $\mathbb{F}_2$, and the l.s.o.p.~for the link and the restriction of $\ell_j$s to the link is generic enough as the given $\Theta$ and $\ell_j$s are generic. 
Thus by Theorem~\ref{thm:main multiHL}, the map
\[
\times \bm{\ell}^{\bm{a}-2\bm{b}-\bm{e}_j} :A(\lk_v \Delta)_{\bm{b}} \rightarrow A(\lk_v \Delta)_{\bm{a}-\bm{b}-\bm{e}_j}
\]
is an isomorphism.
The horizontal map below in the diagram is a composite of these isomorphisms, so it is an isomorphism. Thus the horizontal map above is surjective.
\end{proof}

Let us derive a numerical consequence Theorem~\ref{thm:manifold flag_h} of Theorem~\ref{thm:manifold almost HL} in terms of flag $h''$-vector, a suitable modification of flag $h$-vector.
Given a field $\mathbbm{k}$, for an $\bm{a}$-balanced simplicial complex $(\Delta,\kappa)$, the \emph{flag $h'$-vector} $(h'_{\bm{b}})_{\bm{0} \leq \bm{b} \leq \bm{a}}$ and the \emph{flag $h''$-vector} $(h''_{\bm{b}})_{\bm{0} \leq \bm{b} \leq \bm{a}}$ is defined by 
\begin{align*}
h'_{\bm{b}}&=h_{\bm{b}} -\binom{\bm{a}}{\bm{b}} \left( \sum_{j=1}^{|\bm{b}|-1}(-1)^{|\bm{b}|-j}\widetilde{\beta}_{j-1} \right), \\
h''_{\bm{b}} &= 
    \begin{cases}
    h'_{\bm{b}} - \binom{\bm{a}}{\bm{b}} \widetilde{\beta}_{|\bm{b}|-1} & (\text{if } \bm{b} \neq \bm{a})\\
    h'_{\bm{b}} &(\text{if } \bm{b} = \bm{a})
    \end{cases}.
\end{align*}
Note that for Cohen-Macaulay complex, both the flag $h'$-, $h''$-vector coincide with the flag $h$-vector.
For every $(d-1)$-dimensional simplicial complex, its $h'$-vector $(h'_i)_{0 \leq i \leq d}$ and $h''$-vector $(h''_i)_{0 \leq i \leq d}$ is defined as the flag $h'$-, $h''$-vector considered as a monochromatic simplicial complex.
Note that $h'_i=\sum_{|\bm{b}|=i} h'_{\bm{b}}$ and $h''_i=\sum_{|\bm{b}|=i} h''_{\bm{b}}$ hold.

For homology manifolds, algebraic interpretations of $h'$-, $h''$-vectors are given by Schenzel~\cite{Sch81} and Novik and Swartz~\cite{NS09}.
The following lemma is the balanced analogue of these interpretations about flag $h'$-, $h''$-vectors.
A homology $(d-1)$-manifold $\Delta$ over $\mathbbm{k}$ is \emph{orientable} if $\widetilde{\beta}_{d-1}$ is equal to the number of connected components of $\Delta$.
Recall that, for a graded $\mathbbm{k}[x_1,\ldots,x_n]$-module $M$, its socle is the submodule $\Soc(M)=\{a \in M: \mathfrak{m}a=0\}$, where $\mathfrak{m}=(x_1,\ldots,x_n)$ is the maximal graded ideal.
\begin{lemma} \label{lem:flag h', h''}
Let $(\Delta,\kappa)$ be an $\bm{a}$-balanced connected homology manifold over $\mathbbm{k}$ and let $\Theta$ be an $\mathbb{N}^m$-graded l.s.o.p.~for $\mathbbm{k}[\Delta]$.
Let $A=\mathbbm{k}[\Delta]/(\Theta)$ be an Artinian reduction of $\mathbbm{k}[\Delta]$ with respect to $\Theta$. Then,
\begin{enumerate}
\item[(i)] (Schenzel's formula~\cite[Theorem 3.1]{JMNS}) $h'_{\bm{b}}=\dim A_{\bm{b}}$ for each $\bm{b} \in \mathbb{N}^m$.
\item[(ii)] \cite[Corollary 3.3]{JMNS} $h''_{\bm{b}}=\dim A_{\bm{b}}/\Soc^\circ_{\bm{b}}$ for each $\bm{b} \in \mathbb{N}^m$, where $\Soc^\circ=\bigoplus_{\bm{0} \leq \bm{b} \lneq \bm{a}}\Soc(A)_{\bm{b}}$ denotes the internal socle of $A$.
\item[(iii)] (Dehn-Sommerville relation~\cite[Theorem 4.1]{JMNS}) Moreover if $\Delta$ is orientable, $h''_{\bm{b}}=h''_{\bm{a}-\bm{b}}$ for each $\bm{b} \in \mathbb{N}^m$.
\end{enumerate}
\end{lemma}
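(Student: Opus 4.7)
The plan is to prove all three parts as fine-multigraded refinements of the classical formulas of Schenzel and Novik--Swartz, adapting their arguments so that they respect the $\mathbb{N}^m$-grading coming from the coloring $\kappa$. Throughout I will exploit the fact that, because $\Delta$ is a homology manifold, every link of a nonempty face is a homology sphere; in particular, $\mathbbm{k}[\Delta]$ fails to be Cohen--Macaulay only because of the obstruction sitting at the empty face.

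For (i), I would compute the fine-graded Hilbert series of $\mathbbm{k}[\Delta]$ via Hochster's formula for local cohomology: in squarefree degree $-\bm{\sigma}$ one has $\dim H^i_{\mathfrak{m}}(\mathbbm{k}[\Delta])_{-\bm{\sigma}} = \dim\widetilde{H}_{i-|\sigma|-1}(\lk_\sigma\Delta;\mathbbm{k})$, and since all nonempty links are homology spheres, only $\sigma=\emptyset$ contributes to non-top local cohomology, giving $\dim H^i_{\mathfrak{m}}(\mathbbm{k}[\Delta])_{\bm{0}}=\widetilde{\beta}_{i-1}$. Passing to the $\mathbb{N}^m$-graded Artinian reduction via the spectral sequence (or iterated long exact sequences) associated to $\Theta$, the coefficient of degree $\bm{b}$ picks up a sum $\sum_{j=1}^{|\bm{b}|-1}(-1)^{|\bm{b}|-j}\widetilde{\beta}_{j-1}$, multiplied by a combinatorial factor counting in how many ways $a_j-b_j$ forms of color $j$ can be chosen from the $a_j$ available; this factor is $\binom{\bm{a}}{\bm{b}}$. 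Rearranging gives $\dim A_{\bm{b}}=h'_{\bm{b}}$.

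For (ii), I would identify $\Soc(A)_{\bm{b}}$ for $\bm{b}\lneq\bm{a}$ with the image in $A_{\bm{b}}$ of the degree-$\bm{0}$ part of $H^{|\bm{b}|}_{\mathfrak{m}}(\mathbbm{k}[\Delta])$ under the Artinian reduction. Since this map is explicitly governed by the same l.s.o.p.~computation as in (i), one obtains $\dim \Soc(A)_{\bm{b}}=\binom{\bm{a}}{\bm{b}}\widetilde{\beta}_{|\bm{b}|-1}$, which by (i) immediately gives $\dim A_{\bm{b}}/\Soc^\circ_{\bm{b}}=h'_{\bm{b}}-\binom{\bm{a}}{\bm{b}}\widetilde{\beta}_{|\bm{b}|-1}=h''_{\bm{b}}$. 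For (iii), orientability gives $\widetilde{\beta}_{d-1}=1$, so $A_{\bm{a}}\cong\mathbbm{k}$ and Lee's evaluation map $\Psi:A_{\bm{a}}\to\mathbbm{k}$ is well-defined. By construction of $\Soc^\circ$, the multiplication pairing
\[
A_{\bm{b}}/\Soc^\circ_{\bm{b}}\;\times\;A_{\bm{a}-\bm{b}}/\Soc^\circ_{\bm{a}-\bm{b}}\longrightarrow A_{\bm{a}}\xrightarrow{\Psi}\mathbbm{k}
\]
has trivial left and right kernels, hence is nondegenerate; comparing dimensions yields $h''_{\bm{b}}=h''_{\bm{a}-\bm{b}}$.

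The main obstacle is the bookkeeping in (i): one must show that, after summing the Hochster contributions over all fine degrees of total weight $|\bm{b}|$ whose color profile equals $\bm{b}$, the alternating sum in the $\widetilde{\beta}_{j-1}$'s factors out cleanly with the correct combinatorial coefficient $\binom{\bm{a}}{\bm{b}}$. This is the multigraded analogue of the Schenzel computation carried out in JMNS, and once it is done carefully, parts (ii) and (iii) follow by routine socle-and-duality arguments essentially identical to the singly-graded case.
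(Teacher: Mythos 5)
This lemma carries no proof in the paper: all three parts are stated with citations to the Juhnke-Kubitzke--Murai--Novik--Swartz paper ([JMNS]), so there is no ``paper's own proof'' for you to match. What you have written is a sketch of the proofs one would find (or adapt from) in that reference, and the overall strategy --- refine Schenzel's formula, the Novik--Swartz socle theorem, and Gr\"{a}be's duality to the fine $\mathbb{N}^m$-grading via Hochster/local cohomology --- is indeed the one [JMNS] pursues. In that sense the outline is pointed in the right direction.

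That said, two steps in your sketch paper over genuinely non-obvious results. For (ii), you assert that $\dim \Soc(A)_{\bm b}=\binom{\bm a}{\bm b}\widetilde{\beta}_{|\bm b|-1}$ for $\bm b\lneq\bm a$, but the Novik--Swartz theorem only produces a \emph{subspace} of $\Soc(A)_{\bm b}$ of that dimension in general; equality requires orientability (this is the precise content of their result). The paper's statement of (ii) omits the orientability hypothesis as well, but the remark immediately after the lemma --- that every homology manifold over a characteristic-$2$ field is orientable --- shows the authors are implicitly working in the orientable case; your sketch should flag this rather than claim the equality ``immediately.'' For (iii), the claim that the pairing $A_{\bm b}/\Soc^\circ_{\bm b}\times A_{\bm a-\bm b}/\Soc^\circ_{\bm a-\bm b}\to\mathbbm{k}$ is nondegenerate ``by construction'' is not by construction: the socle certainly sits in the kernel, but the reverse inclusion (that the kernel is exactly the socle, i.e.\ that $A/\Soc^\circ$ is Gorenstein) is Gr\"{a}be's theorem, which is the nontrivial input behind the Dehn--Sommerville relation here. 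Finally, in (i), the sentence asserting that ``the coefficient of degree $\bm b$ picks up \ldots multiplied by a combinatorial factor $\binom{\bm a}{\bm b}$'' is the actual technical heart of the multigraded refinement and deserves a real computation with the Koszul/prism long exact sequences, keeping track of the colors of the $\theta_i$'s rather than just their number; as written it is an assertion, not an argument.
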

We remark that every homology manifold over a field of characteristic $2$ is orientable.
Now we are ready to prove Theorem~\ref{thm:manifold flag_h}.
\begin{proof}[Proof of Theorem~\ref{thm:manifold flag_h}]
Let $\bm{b}, \bm{c} \in \mathbb{N}^m$ be integer vectors with $\bm{b} \lneq \bm{c} \lneq \bm{a}-\bm{b}$.
Then there is some $j\in[m]$ with $\bm{b}+\bm{e}_j \leq \bm{c}$.
By Theorem~\ref{thm:manifold almost HL} the composite $A(\Delta)_{\bm{b}+\bm{e}_j} \overset{\times \bm{\ell}^{\bm{c}-\bm{b}-\bm{e}_j}}{\longrightarrow} A(\Delta)_{\bm{c}} \overset{\times \bm{\ell}^{\bm{a}-\bm{b}-\bm{c}}}{\longrightarrow} A(\Delta)_{\bm{a}-\bm{b}}$ is surjective, so the latter map $\times \bm{\ell}^{\bm{a}-\bm{b}-\bm{c}}:A(\Delta)_{\bm{c}} \rightarrow A(\Delta)_{\bm{a}-\bm{b}}$ is surjective.
Since $\bm{c} \lneq \bm{a}-\bm{b}$, the degree $\bm{c}$ component of the socle is contained in the kernel of this map. 
Thus by Lemma~\ref{lem:flag h', h''}, we have the inequality
\begin{equation}
h''_{\bm{c}}=\dim A(\Delta)_{\bm{c}} - \dim \Soc(A(\Delta))_{\bm{c}} \geq \dim A(\Delta)_{\bm{a}-\bm{b}} = h'_{\bm{a}-\bm{b}}. \label{eq:mfd_h}
\end{equation}
All that remains is to convert (\ref{eq:mfd_h}) to the inequality between flag $h''$-vectors.
For this, if $\bm{b}\neq\bm{0}$, by Lemma~\ref{lem:flag h', h''} (iii), we have 
\begin{align}
h'_{\bm{a}-\bm{b}} &=h''_{\bm{a}-\bm{b}}+\binom{\bm{a}}{\bm{b}}\widetilde{\beta}_{|\bm{a}-\bm{b}|-1} & \notag \\
&=h''_{\bm{b}} + \binom{\bm{a}}{\bm{b}}\widetilde{\beta}_{|\bm{b}|} & (\widetilde{\beta}_{k}=\widetilde{\beta}_{d-k-1} \text{ for } k \geq 1 \text{ by Poincar\'{e} duality}), \label{eq:mfd_betti}
\end{align}
while the same equality (\ref{eq:mfd_betti}) also holds for $\bm{b}=\bm{0}$ since $h'_{\bm{a}}=1$ and $h''_{\bm{0}}+\binom{\bm{a}}{\bm{0}}\beta_{0}=1+0=1$. By (\ref{eq:mfd_h}) and (\ref{eq:mfd_betti}), the desired inequality $h''_{\bm{c}} \geq h''_{\bm{b}}+\binom{\bm{a}}{\bm{b}}\widetilde{\beta}_{|\bm{b}|}$ follows.
\end{proof}

\begin{corollary} \label{cor:mfd-h}
For $\bm{a} \in \mathbb{N}^m_+$, let $(\Delta,\kappa)$ be an $\bm{a}$-balanced connected homology manifold over $\mathbb{F}_2$. We have
\[
\frac{h_i''+\binom{d}{i}\widetilde{\beta}_{i}}{\binom{m+i-1}{i}} \leq \frac{h_{i+1}''}{\binom{m+i}{i+1}}
\]
for every nonnegative integer $i \in \mathbb{N}$ with $i\leq \min_{j=1}^m \frac{a_j-1}{2}$, where $d=|\bm{a}|$.
\end{corollary}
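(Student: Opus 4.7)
The plan is to follow the same averaging scheme used in the proof of Corollary~\ref{cor:h-vec}, with Theorem~\ref{thm:flag-h} replaced by its manifold analogue Theorem~\ref{thm:manifold flag_h}. The only additional bookkeeping comes from the extra $\binom{\bm{a}}{\bm{b}}\widetilde{\beta}_{|\bm{b}|}$ term that distinguishes Theorem~\ref{thm:manifold flag_h} from Theorem~\ref{thm:flag-h}.

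Concretely, fix $i$ with $i\leq\min_{j=1}^m(a_j-1)/2$. For every $\bm{b}\in\mathbb{N}^m$ with $|\bm{b}|=i$ and every $j\in[m]$, the hypothesis gives $2b_j+1\leq a_j$ and $2b_k\leq a_k$ for $k\neq j$, so $\bm{b}+\bm{e}_j\leq\bm{a}-\bm{b}$; moreover, since $|\bm{a}|=d\geq m(2i+1)>2i+1=|2\bm{b}+\bm{e}_j|$ whenever $m\geq 2$, this inequality is strict, i.e.~$\bm{b}\lneq\bm{b}+\bm{e}_j\lneq\bm{a}-\bm{b}$. Theorem~\ref{thm:manifold flag_h} applied with $\bm{c}=\bm{b}+\bm{e}_j$ therefore yields
\[
h''_{\bm{b}+\bm{e}_j}\;\geq\;h''_{\bm{b}}+\binom{\bm{a}}{\bm{b}}\widetilde{\beta}_i.
\]

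Multiplying by the weight $b_j+1$ and summing over all pairs $(\bm{b},j)$ with $|\bm{b}|=i$ and $j\in[m]$, the left-hand side reorganizes via $\bm{c}=\bm{b}+\bm{e}_j$ as $\sum_{|\bm{c}|=i+1}\bigl(\sum_{j}c_j\bigr)h''_{\bm{c}}=(i+1)h''_{i+1}$. On the right, using $\sum_{j=1}^m(b_j+1)=i+m$ together with the Vandermonde identity $\sum_{|\bm{b}|=i}\binom{\bm{a}}{\bm{b}}=\binom{d}{i}$, one obtains $(i+m)h''_i+(i+m)\binom{d}{i}\widetilde{\beta}_i$. Combining gives
\[
(i+1)h''_{i+1}\;\geq\;(i+m)\Bigl(h''_i+\binom{d}{i}\widetilde{\beta}_i\Bigr),
\]
which is equivalent to the claimed inequality via the ratio identity $\binom{m+i}{i+1}/\binom{m+i-1}{i}=(m+i)/(i+1)$.

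I do not anticipate a serious obstacle: the argument is structurally identical to that of Corollary~\ref{cor:h-vec}, and the only substantive checkpoint is applicability of Theorem~\ref{thm:manifold flag_h}, which reduces to verifying the strict inequality $|2\bm{b}+\bm{e}_j|<|\bm{a}|$. The residual monochromatic case $m=1$ (where this strict inequality can fail precisely at $i=(d-1)/2$) can be handled directly via Theorem~\ref{thm:manifold almost HL} combined with the Schenzel/Dehn--Sommerville relations from Lemma~\ref{lem:flag h', h''}, converting a surjection of graded components into the required $h''$-estimate.
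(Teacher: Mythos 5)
Your main argument matches the paper's proof exactly: the same weighted sum $\sum_{|\bm{b}|=i}\sum_{j\in[m]}(b_j+1)\bigl(h''_{\bm{b}}+\binom{\bm{a}}{\bm{b}}\widetilde{\beta}_i\bigr)\leq\sum_{|\bm{b}|=i}\sum_{j\in[m]}(b_j+1)h''_{\bm{b}+\bm{e}_j}$, the same Vandermonde identity, and the same ratio computation. You are also more careful than the paper in one respect: the paper applies the inequality $h''_{\bm{b}}+\binom{\bm{a}}{\bm{b}}\widetilde{\beta}_i\leq h''_{\bm{b}+\bm{e}_j}$ without ever verifying the strictness hypothesis $\bm{b}\lneq\bm{b}+\bm{e}_j\lneq\bm{a}-\bm{b}$ of Theorem~\ref{thm:manifold flag_h}, whereas you explicitly check it and correctly isolate $m\geq 2$ (where $|\bm{a}|\geq m(2i+1)>2i+1$ forces strictness) from the potentially problematic $m=1$, $i=(d-1)/2$ case.

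However, your concluding remark that the $m=1$, $i=(d-1)/2$ case ``can be handled directly via Theorem~\ref{thm:manifold almost HL} combined with the Schenzel/Dehn--Sommerville relations'' does not go through, and in fact cannot: with $m=1$, $\bm{a}=(d)$, $d=2i+1$, the surjection from Theorem~\ref{thm:manifold almost HL} degenerates to the identity $A_{i+1}\to A_{i+1}$ and yields nothing. Worse, the stated inequality is actually \emph{false} at that boundary. Take a $7$-vertex triangulation of the torus, so $d=3$, $m=1$, $i=1$: here $(h''_0,h''_1,h''_2,h''_3)=(1,4,4,1)$ and $\widetilde{\beta}_1=2$ over $\mathbb{F}_2$, so $h''_1+\binom{3}{1}\widetilde{\beta}_1=10>4=h''_2$. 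By Dehn--Sommerville $h''_i=h''_{i+1}$ when $d=2i+1$, so any positive middle Betti number breaks the claimed bound. The honest conclusion is that the range of $i$ in the statement should exclude this degenerate monochromatic case (e.g.~replace $i\leq\min_j(a_j-1)/2$ by the strict condition $2i+1<|\bm{a}|$ together with $i\leq\min_j(a_j-1)/2$), which is automatic whenever $m\geq 2$. This is an oversight in the paper's proof as written, not a flaw in your main argument.
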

\begin{proof}
The inequality between $h''$-vectors is obtained by taking a weighted sum of the inequalities $h''_{\bm{b}} + \binom{\bm{a}}{\bm{b}}\widetilde{\beta}_{|\bm{b}|} \leq h''_{\bm{b}+\bm{e}_j}$ over all $\bm{b} \in \mathbb{N}^m$ with $|\bm{b}|=i$ and $j \in [m]$ as follows:
\[
(m+i) (h''_i+\binom{d}{i}\widetilde{\beta}_i)
=\sum_{|\bm{b}|=i}\sum_{j=1}^m (b_j+1)\left(h''_{\bm{b}}+\binom{\bm{a}}{\bm{b}}\widetilde{\beta}_i \right) \leq \sum_{|\bm{b}|=i}\sum_{j=1}^m (b_j+1)h''_{\bm{b}+\bm{e}_j}
=(i+1) h''_{i+1}.
\]
\end{proof}

\subsection{Simplicial Cycle}
For a $(d-1)$-dimensional simplicial complex $\Delta$ with $\widetilde{H}_{d-1}(\Delta;\mathbbm{k}) \neq 0$, we call a nonzero element $\mu \in \widetilde{H}_{d-1}(\Delta;\mathbbm{k})$ a \emph{simplicial cycle}.
Let $\Delta$ be a $(d-1)$-dimensional simplicial complex, and let $A=\mathbbm{k}[\Delta]/(\Theta)$ be an Artinian reduction of $\mathbbm{k}[\Delta]$.
A simplicial cycle $\mu$ of $\Delta$ defines a nonzero linear function $\Psi_\mu:A_{d} \rightarrow \mathbbm{k}$ by 
\begin{equation}
\Psi_\mu(x_\sigma) = \frac{\mu_\sigma}{[\sigma]} \label{eq:cycle}
\end{equation}
for each positively facet $\sigma \in \Delta$, where $\mu_\sigma$ is the coefficient of the facet $\sigma$ and $[\sigma]$ is computed as in Lemma~\ref{lem:Lee formula} (note that the linear function on $A_d$ is determined by the values of squarefree monomials~\cite[Theorem 9]{Lee}).
Here the fact that there exists a linear map that satisfies (\ref{eq:cycle}) for all facets of $\Delta$ can be verified by checking equilibrium condition~\cite[Theorem 10]{Lee}. (Or alternatively consider weighted connected sum in the formulation of Karu-Xiao~\cite{KX}.)

A nonzero linear function $\varphi:\mathbbm{k}[\bm{x}]_d \rightarrow \mathbbm{k}$ determines a standard Artinian Gorenstein graded algebra $\mathbbm{k}[\bm{x}]/I$ by the condition that $f \in \mathbbm{k}[\bm{x}]_k$ is in $I$ if and only if either $k \geq d+1$ or $\varphi(fg)=0$ for all $g \in \mathbbm{k}[\bm{x}]_{d-k}$.
Thus the map $\Psi_\mu$ (by concatenating a projection $\mathbbm{k}[\Delta]_{d} \twoheadrightarrow A_{d}$ to it) determines an Artinian Gorenstein $\mathbbm{k}$-algebra denoted as $B(\mu)$, which is called the \emph{Gorensteinification} of $A$~\cite{APP}.
Note that if $\Delta$ is a homology sphere, $A$ itself is Gorenstein, so we have $A=B(\mu)$ for nonzero $\mu \in \widetilde{H}_{d-1}(\Delta;\mathbbm{k})$.
If $A$ is $\mathbb{N}^m$-graded, $B(\mu)$ is also $\mathbb{N}^m$-graded and satisfies multigraded Poincar\'{e} duality.

Theorem~\ref{thm:main multiHL} is generalized to a simplicial cycle over a field of characteristic $2$ as follows.
\begin{theorem} \label{thm:simplicial cycle}
For an $\bm{a}$-balanced simplicial complex $(\Delta,\kappa)$, let $\mu$ be a simplicial cycle of $\Delta$ over a field $\mathbbm{k}$ of characteristic $2$.
Let $A=\widetilde{\mathbbm{k}}[\Delta]/(\Theta)$ be the generic $\mathbb{N}^m$-graded Artinian reduction of $\mathbbm{k}[\Delta]$ and let $B(\mu)$ be the Gorensteinification of $A$ with respect to $\mu$.
Then for generic elements $\ell_j \in B(\mu)_{\bm{e}_j}$ for $j=1,\ldots,m$, the multiplication map
\[
\times \bm{\ell}^{\bm{a}-2\bm{b}}: B(\mu)_{\bm{b}} \rightarrow B(\mu)_{\bm{a}-\bm{b}}
\]
is an isomorphism for every $\bm{b}\in\mathbb{N}^m$ with $\bm{b} \leq \frac{\bm{a}}{2}$.
\end{theorem}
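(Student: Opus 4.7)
The plan is to mirror the anisotropy argument in Section~5, replacing the normalized evaluation map $\Psi$ by the linear function $\Psi_\mu$ induced by the simplicial cycle $\mu$, and replacing $A$ by its Gorensteinification $B(\mu)$. By construction, $B(\mu)$ is Artinian Gorenstein and $\mathbb{N}^m$-graded standard, so it satisfies multigraded Poincar\'{e} duality, and the structural inputs used in Section~5 are available for $B(\mu)$.

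The first step is to extend the differential formula (Lemma~\ref{lem:diff} and Corollary~\ref{cor:diff general}) to $\Psi_\mu$: for any $\kappa$-transversal sequence $I$, any $g \in B(\mu)_{\bm{b}}$ with $2\bm{b} \leq \bm{a}$, and any length-$(d-2|\bm{b}|)$ vertex sequence $J$,
\[
\partial_I \Psi_\mu(g^2 x_J) = \Psi_\mu(g \sqrt{x_I x_J})^2.
\]
By the same linearity and substitution arguments as in Lemma~\ref{lem:diff} and Corollary~\ref{cor:diff general}, this reduces to the $\mathbb{N}$-graded identity $\partial_I \Psi_\mu(x_J) = \Psi_\mu(\sqrt{x_I x_J})^2$, which is a mild extension of Theorem~\ref{thm:KX}: the Karu-Xiao proof depends only on the cycle condition $\partial \mu = 0$ over $\mathbb{F}_2$ --- that each codimension-one face of $\Delta$ lies in an even number of facets of $\supp(\mu)$ --- and not on $\Delta$ itself being a pseudomanifold. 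The weighted version of Lee's formula~(\ref{eq:eval}), with $\frac{1}{[\sigma]}$ replaced by $\frac{\mu_\sigma}{[\sigma]}$, gives $\Psi_\mu$ on an arbitrary monomial, and its partial derivative can be computed exactly as in~\cite{KX,PP}; this extension is also implicit in the APP framework~\cite{APP} for Gorensteinifications.

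With the differential formula in hand, the analogues of Lemma~\ref{lemma:WLP anis} and Theorem~\ref{thm:main anis} go through for the specific elements $\ell_j = \sum_{v \in \kappa^{-1}(j)} x_v$ regarded as elements of $B(\mu)_{\bm{e}_j}$. Given nonzero $g \in B(\mu)_{\bm{b}}$, multigraded Poincar\'{e} duality of $B(\mu)$ yields a monomial $x_K$ of degree $\bm{a}-\bm{b}$ with $\Psi_\mu(g x_K) \neq 0$; decomposing $x_K^2 = x_I x_{U^*} x_J$ with $I$ a $\kappa$-transversal sequence and applying the differential formula reproduces the weak-Lefschetz anisotropy step of Lemma~\ref{lemma:WLP anis}, and the iterative reduction of Theorem~\ref{thm:main anis} then shows that $\mathcal{Q}_\mu(g,h) = \Psi_\mu(g h \bm{\ell}^{\bm{a} - 2\bm{b}})$ is anisotropic on $B(\mu)_{\bm{b}}$. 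Combined with the equality $\dim B(\mu)_{\bm{b}} = \dim B(\mu)_{\bm{a}-\bm{b}}$, this forces $\times \bm{\ell}^{\bm{a}-2\bm{b}}$ to be an isomorphism. Since anisotropy is a Zariski-open condition on the choice of $(\ell_1,\ldots,\ell_m)$, the conclusion passes to a generic choice of $\ell_j \in B(\mu)_{\bm{e}_j}$. The main obstacle I anticipate is precisely this extension of Karu-Xiao's identity to arbitrary simplicial cycles: it amounts to tracking where the pseudomanifold hypothesis is used in their proof and verifying that $\partial\mu = 0 \pmod{2}$ suffices in each step, but the bookkeeping has to be done carefully.
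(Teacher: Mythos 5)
Your overall architecture is the same as the paper's: replace $\Psi$ by $\Psi_\mu$, replace $A$ by its Gorensteinification $B(\mu)$, observe that $B(\mu)$ inherits multigraded Poincar\'{e} duality, and rerun the anisotropy argument of Section~5 (Lemma~\ref{lemma:WLP anis} and Theorem~\ref{thm:main anis}) with the specific colored $\ell_j$; then generic $\ell_j$ follow by semicontinuity. You also correctly isolate the one new ingredient needed, namely the extension of the Karu--Xiao differential identity from pseudomanifolds to arbitrary simplicial cycles. So the skeleton is right.

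Where you differ from the paper is precisely at the point you flag as the ``main obstacle.'' You propose to open up the Karu--Xiao proof of Theorem~\ref{thm:KX} and check step by step that the pseudomanifold hypothesis can be relaxed to $\partial\mu = 0$. That is workable but laborious, and as phrased it is left as a gap rather than a completed argument. (A minor imprecision: ``each codimension-one face lies in an even number of facets of $\supp(\mu)$'' only describes the cycle condition when the coefficient field is literally $\mathbb{F}_2$; for a general field $\mathbbm{k}$ of characteristic $2$ the condition is $\sum_{\sigma\supset\tau}\mu_\sigma = 0$.) The paper short-circuits this entirely with a structural reduction: following~\cite[Section 2.6]{KX}, after adjoining a cone vertex, any simplicial cycle $\mu$ can be expressed as a $\mathbbm{k}$-weighted connected sum of boundaries of simplices. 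Each simplex boundary is a pseudomanifold, so Theorem~\ref{thm:KX} applies verbatim to each summand, and $\Psi_\mu$ is the corresponding weighted sum of those evaluation maps. Linearity of both $\Psi_\mu$ and $\partial_I$ then yields the differential formula for $\Psi_\mu$ with no bookkeeping inside the Karu--Xiao proof. If you want to complete your write-up cleanly, this decomposition is the missing lemma you should invoke rather than re-deriving the identity from scratch.
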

\begin{proof}[Proof Sketch]
We only give a proof sketch as the discussion is almost the same as in Section 3-5.
By the similar discussion as in~\cite[Section 2.6]{KX}, by introducing the cone vertex, the simplicial cycle $\mu$ can be written as a weighted connected sum of simplex boundaries with the weights in $\mathbbm{k}$.
Accordingly, the map $\Psi_\mu$ is written as the weighted sum of the evaluation map of simplex boundaries. 
Hence, $\Psi=\Psi_\mu$ also satisfies the differential formula in Lemma~\ref{lem:diff}, and thus Corollary~\ref{cor:diff general}.
In the argument in Section 5, we have only used the fact that $A$ has multigraded Poincar\'{e} duality, which is now passed to $B(\mu)$. Hence we can reproduce the argument in Section 5 by replacing $A$ with $B(\mu)$.
\end{proof}
\begin{remark}
To deduce a (generalized) lower bound type inequality from Theorem~\ref{thm:simplicial cycle}, it is a key to understand $\dim A_*$ and $\dim B_*-\dim A_*$.
As far as I know, even in the natural $\mathbb{N}$-graded case, we have limited knowledge of these numbers other beyond the case of homology manifolds. 
(\cite{Mu,Fog,Oba} can be though of as a research in this direction.)
The understanding of these numbers would also be important from the perspective of skeletal rigidity~\cite{TW}.
\end{remark}
From Theorem~\ref{thm:simplicial cycle}, we can obtain the following multigraded version of top-heavy Lefschetz property for doubly Cohen-Macaulay complexes.
A simplicial complex $\Delta$ is called a \emph{doubly Cohen-Macaulay} complex (or \emph{$2$-CM} complex) over a field $\mathbbm{k}$ if $\Delta$ is Cohen-Macaulay over $\mathbbm{k}$ and for each vertex $v$, $\Delta-v$ is Cohen-Macaulay over $\mathbbm{k}$ and has the same dimension as $\Delta$.
\begin{theorem} \label{thm:2CM lefschetz}
For $\bm{a} \in \mathbb{N}^m_+$, let $(\Delta,\kappa)$ be an $\bm{a}$-balanced doubly Cohen-Macaulay complex over a field $\mathbbm{k}$ of characteristic $2$.
Let $A=\widetilde{\mathbbm{k}}[\Delta]/(\Theta)$ be the generic $\mathbb{N}^m$-graded Artinian reduction of $\mathbbm{k}[\Delta]$.
Then, for generic elements $\ell_j \in A_{\bm{e}_j}$ for $j=1,\ldots,m$, the multiplication map
\[
\times \bm{\ell}^{\bm{a}-2\bm{b}}: A_{\bm{b}} \rightarrow A_{\bm{a}-\bm{b}}
\]
is injective for any $\bm{b} \in \mathbb{N}^m$ with $\bm{b} \leq \frac{\bm{a}}{2}$.
\end{theorem}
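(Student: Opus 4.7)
The plan is to reduce Theorem~\ref{thm:2CM lefschetz} to Theorem~\ref{thm:simplicial cycle} by realizing $A_{\bm{b}}$ as a subspace of a direct sum of Gorensteinifications of $A$ along a basis of top homology. Two inputs drive the argument: the classical fact that a $2$-CM complex has a \emph{level} Stanley-Reisner ring, so that $\Soc(A)$ is concentrated in top $\mathbb{N}$-degree $d=|\bm{a}|$, and the Tay-Whiteley identification $A_{\bm{a}}\cong\widetilde{H}_{d-1}(\Delta;\mathbbm{k})$ recalled at the start of Section 3. Since the only multidegree $\bm{c}\leq\bm{a}$ with $|\bm{c}|=d$ is $\bm{c}=\bm{a}$, levelness translates to $\Soc(A)\subseteq A_{\bm{a}}$ in the fine grading; under Tay-Whiteley, every basis $\mu_1,\ldots,\mu_k$ of $\widetilde{H}_{d-1}(\Delta;\mathbbm{k})$ yields a basis $\Psi_{\mu_1},\ldots,\Psi_{\mu_k}$ of the dual space $A_{\bm{a}}^{*}$.

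I then verify that the multiplication pairing $A_{\bm{b}}\times A_{\bm{a}-\bm{b}}\to A_{\bm{a}}$ is left-nondegenerate for every $\bm{b}\lneq\bm{a}$: given $0\neq g\in A_{\bm{b}}$, levelness forces $g\notin\Soc(A)$, so some degree-$\bm{e}_j$ element (necessarily with $b_j<a_j$) fails to annihilate $g$; iterating this argument as long as the running multidegree is $\lneq\bm{a}$ produces an $h\in A_{\bm{a}-\bm{b}}$ with $gh\neq 0$. Consequently, forming the Gorensteinifications $B(\mu_1),\ldots,B(\mu_k)$ attached to this basis and the induced projection $\iota:A\to\bigoplus_{i=1}^k B(\mu_i)$, any $g\in\ker\iota\cap A_{\bm{b}}$ satisfies $\Psi_{\mu_i}(gh)=0$ for all $i$ and all $h$, forcing $g=0$ by the nondegeneracy just established; thus $\iota$ is injective on $A_{\bm{b}}$ for every $\bm{b}\lneq\bm{a}$, in particular for $\bm{b}\leq\bm{a}/2$.

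Finally, since the $\ell_j$ are generic in $A_{\bm{e}_j}$, their images in each $B(\mu_i)_{\bm{e}_j}$ are simultaneously generic for the finitely many $B(\mu_i)$'s, so Theorem~\ref{thm:simplicial cycle} applied to each $B(\mu_i)$ gives that $\times\bm{\ell}^{\bm{a}-2\bm{b}}:B(\mu_i)_{\bm{b}}\to B(\mu_i)_{\bm{a}-\bm{b}}$ is an isomorphism. Because multiplication by $\bm{\ell}^{\bm{a}-2\bm{b}}$ commutes with $\iota$, the commutative square with $\iota$ injective on the left vertical arrow and an isomorphism on the bottom row forces the top arrow $\times\bm{\ell}^{\bm{a}-2\bm{b}}:A_{\bm{b}}\to A_{\bm{a}-\bm{b}}$ to be injective. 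The main obstacle is the first ingredient: verifying that $2$-CM implies level in the present $\mathbb{N}^m$-graded setting and that Tay-Whiteley is compatible with the fine grading; both reduce essentially to the observation that $A_d$ is already the single $\mathbb{N}^m$-homogeneous summand $A_{\bm{a}}$, so no new content beyond the $\mathbb{N}$-graded case is needed.
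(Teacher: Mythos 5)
Your proof is correct and follows essentially the same route as the paper's: levelness of the Stanley--Reisner ring of a $2$-CM complex gives injectivity of the natural map $A\to\bigoplus_{i}B(\mu_i)$ along a basis $\mu_1,\ldots,\mu_k$ of $\widetilde{H}_{d-1}(\Delta;\mathbbm{k})$, and Theorem~\ref{thm:simplicial cycle} applied to each Gorensteinification $B(\mu_i)$ transfers the Lefschetz isomorphism to an injection on $A$ via the same commutative square. The only difference is cosmetic: you spell out the socle-avoiding iteration that underlies the paper's terser assertion that levelness yields a $y\in A_{\bm{a}-\bm{b}}$ with $(\Psi_{\mu_1}(xy),\ldots,\Psi_{\mu_k}(xy))\neq 0$.
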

\begin{proof}
Let $d=|\bm{a}|$ and let $\mu_1,\ldots,\mu_k$ be a basis of $\widetilde{H}_{d-1}(\Delta;\mathbbm{k})$.
It is known that an Artinian reduction of the Stanley-Reisner ring of a doubly Cohen-Macaulay complex is a level ring~\cite[Section III.3]{Sta-book}, that is, $\Soc (A)=A_{\bm{a}}$.
This implies that for any nonzero $x \in A_{\bm{b}}$, there is $y\in A_{\bm{a}-\bm{b}}$ such that $(\Psi_{\mu_1}(xy),\ldots,\Psi_{\mu_k}(xy))\neq0$.
Thus the map $A(\Delta)_* \rightarrow \bigoplus_{i=1}^k B(\mu_i)_*$ is injective.
Consider the following commuting diagram:
\[
\begin{tikzpicture}[auto]
\node (a) at (0,1.2) {$A(\Delta)_{\bm{b}}$}; 
\node (b) at (4,1.2) {$A(\Delta)_{\bm{a}-\bm{b}}$};
\node (c) at (0,0) {$\bigoplus_{l=1}^k B(\mu_l)_{\bm{b}}$}; 
\node (d) at (4,0) {$\bigoplus_{l=1}^k B(\mu_l)_{\bm{a}-\bm{b}}$};
\draw[->] (a) to node {$\times \bm{\ell}^{\bm{a}-2\bm{b}}$} (b);
\draw[->] (c) to node[above] {$\times \bm{\ell}^{\bm{a}-2\bm{b}}$} node[below] {$\cong$} (d);
\draw[right hook->] (a) to (c);
\draw[right hook->] (b) to (d);
\end{tikzpicture}
\]
As the bottom map is an isomorphism by Theorem~\ref{thm:simplicial cycle}, the top map is injective.
\end{proof}
Now Theorem~\ref{thm:2CM_flag h} is readily derived.
\begin{proof}[Proof of Theorem~\ref{thm:2CM_flag h}]
For $\bm{b} \leq \bm{c} \leq \bm{a}-\bm{b}$, Theorem~\ref{thm:2CM lefschetz} implies that the linear map $\times \bm{\ell}^{\bm{b}-\bm{c}}:A_{\bm{b}} \rightarrow A_{\bm{c}}$ is injective. Thus $h_{\bm{b}}=\dim A_{\bm{b}} \leq \dim A_{\bm{c}} = h_{\bm{c}}$ follows.
\end{proof}
We remark that the inequality between the $h$-vectors as in Corollary~\ref{cor:h-vec} also follows by the same argument.
\section{Lefschetz property as an $\mathbb{N}$-graded algebra}
Let $(\Delta,\kappa)$ be an $\bm{a}$-balanced simplicial complex for $\bm{a}\in\mathbb{N}_+^m$ with $d=|\bm{a}|$.
For simplicity, in this section, we focus on the case that $\Delta$ is a homology sphere.
Let $A$ be the generic $\mathbb{N}^m$-graded Artinian reduction of $\mathbbm{k}[\Delta]$.
In this section, instead of considering $A$ as an $\mathbb{N}^m$-graded algebra, we regard $A$ as an $\mathbb{N}$-graded algebra $A=A_0\oplus\cdots\oplus A_d$ under the coarse grading $\deg x_v=1$ for all $v \in V(\Delta)$ and investigate weak Lefschetz property as an $\mathbb{N}$-graded algebra.
See \cite[Conjecture 1.3]{CJMN} and \cite[Conjecture 1.1]{Oba} for related conjectures.

\subsection{Full-rankness at ends}
From Theorem~\ref{thm:main multiHL}, one can deduce that, for a generic linear form $\ell \in A_1$, the multiplication map $\times \ell:A_i \rightarrow A_{i+1}$ is injective for $i \leq \min_{j=1}^m \frac{a_j-1}{2}$ and is surjective for $i \geq d-1-\min_{j=1}^m \frac{a_j-1}{2}$.
We can slightly increase the range of $i$ as follows. 
\begin{theorem} \label{thm:fullrank at ends}
For $\bm{a} \in \mathbb{N}^m_+$ with $|\bm{a}|=d$, let $(\Delta,\kappa)$ be an $\bm{a}$-balanced homology $(d-1)$-sphere over $\mathbb{F}_2$ and let $\mathbbm{k}$ be a field of characteristic $0$ or $2$.
Let $A=\widetilde{\mathbbm{k}}[\Delta]/(\Theta)$ be the generic $\mathbb{N}^m$-graded Artinian reduction of $\mathbbm{k}[\Delta]$, and let $\ell=\sum_{v \in V(\Delta)} x_v \in A_1$.
Then the multiplication map $\times \ell:A_i \rightarrow A_{i+1}$ is injective for $i \leq \min \{ \frac{d-1}{2}, \frac{a_1}{2},\ldots,\frac{a_m}{2} \}$ and is surjective for $i \geq d-1-\min\{ \frac{d-1}{2}, \frac{a_1}{2} ,\ldots, \frac{a_m}{2} \}$.
\end{theorem}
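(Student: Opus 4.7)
The plan is to reduce the theorem to proving injectivity of $\times \ell : A_i \to A_{i+1}$ in the range $i \leq N := \min\{\frac{d-1}{2}, \frac{a_1}{2}, \ldots, \frac{a_m}{2}\}$. The complementary surjectivity statement then follows directly from the Gorenstein self-duality of $A$: the map $\times \ell : A_i \to A_{i+1}$ is surjective iff $\times \ell : A_{d-i-1} \to A_{d-i}$ is injective (since the adjoint of $\times \ell$ under the Poincar\'{e} pairing is again $\times \ell$), and the dual range $d-i-1 \leq N$ is exactly $i \geq d-1-N$.

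I would first prove injectivity in characteristic $2$ by the anisotropy technique from Section~5. Fix $i \leq N$ and suppose $g \in A_i$ satisfies $g\ell = 0$. Decompose $g = \sum_{|\bm{b}|=i} g_{\bm{b}}$ into multigraded components, and fix any $\bm{b}_0$ with $|\bm{b}_0| = i$. The hypothesis $i \leq \min_j a_j/2$ ensures $\bm{b}_0 \leq \bm{a}/2$, so Theorem~\ref{thm:main anis} applies to $A_{\bm{b}_0}$. Since $i \leq (d-1)/2$ rules out $\bm{b}_0 = \bm{a}/2$, there exists $k_0 \in [m]$ with $2(b_0)_{k_0} < a_{k_0}$; set $\bm{c} := \bm{a} - 2\bm{b}_0 - \bm{e}_{k_0} \in \mathbb{N}^m$, a vector of size $d-2i-1$.

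The crucial step is to multiply $g\ell = 0$ by $g \bm{\ell}^{\bm{c}}$: this gives $g^2 \ell \bm{\ell}^{\bm{c}} = 0$ in $A_{\bm{a}}$. Expanding $\ell = \sum_k \ell_k$ and invoking $g^2 = \sum_{\bm{b}} g_{\bm{b}}^2$ (Freshman's dream in characteristic $2$), then applying $\Psi$, yields
\[
\sum_{k \in [m]} \sum_{\bm{b}:\, 2\bm{b}+\bm{c}+\bm{e}_k = \bm{a}} \Psi(g_{\bm{b}}^2 \bm{\ell}^{\bm{a}-2\bm{b}}) = 0.
\]
The key observation is that $\bm{a}-\bm{c} = 2\bm{b}_0 + \bm{e}_{k_0}$ has exactly one odd component (namely the $k_0$-th), so the parity constraint $2\bm{b}+\bm{e}_k = 2\bm{b}_0+\bm{e}_{k_0}$ has only the unique solution $(\bm{b}, k) = (\bm{b}_0, k_0)$. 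The sum collapses to $\Psi(g_{\bm{b}_0}^2 \bm{\ell}^{\bm{a}-2\bm{b}_0}) = 0$, and Theorem~\ref{thm:main anis} forces $g_{\bm{b}_0} = 0$. Ranging over $\bm{b}_0$ gives $g = 0$.

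For a field $\mathbbm{k}$ of characteristic $0$, I would transfer injectivity from characteristic $2$ using the basis-lifting technique of Section~6: pick a monomial basis of $\widetilde{\mathbb{F}_2}[\Delta]/(\Theta)$, which by Lemma~\ref{lem:basis} is also a basis of $A$, and reduce the injectivity of $\times \ell$ to the non-vanishing of suitable minors of a coefficient matrix with entries in $\mathbb{Z}[p_{k,v}]$; their mod-$2$ reductions are nonzero by the characteristic-$2$ case, so the minors are nonzero over $\widetilde{\mathbbm{k}}$ as well. The main technical obstacle is the isolation trick---finding the right $\bm{c}$ so that the parity constraint singles out one multigraded summand; once this is set up, the rest of the argument is a routine adaptation of the Section~5--6 machinery.
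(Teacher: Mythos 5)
Your proof is correct, and the injectivity argument takes a genuinely different route from the paper's. The paper also reduces to showing that the quadratic form $g \mapsto g^2\ell$ is anisotropic on $A_i$, but it proves this by re-opening the differential-formula machinery: for nonzero $g$ it picks a test monomial $x_K$ of coarse degree $d-i$ with $gx_K \neq 0$ via $\mathbb{N}$-graded Poincar\'e duality, observes that the inequality $i \leq \min_j a_j/2$ forces the multidegree $\bm{b}$ of $x_K$ to satisfy $\bm{b} \geq \bm{a}/2$ so that $x_K^2$ factors as $x_I x_{v^*} x_J$ with $I$ $\kappa$-transversal, and then runs Corollary~\ref{cor:diff general} to get $\partial_I\Psi(g^2\ell\, x_J) = \Psi(gx_K)^2 \neq 0$. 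You instead decompose $g$ into its $\mathbb{N}^m$-homogeneous pieces $g_{\bm{b}}$, multiply $g\ell = 0$ by $g\bm{\ell}^{\bm{c}}$ for a carefully chosen $\bm{c}$, and exploit the parity of $\bm{a}-\bm{c}$ to make exactly one summand $g_{\bm{b}_0}^2\bm{\ell}^{\bm{a}-2\bm{b}_0}$ survive in degree $\bm{a}$; Theorem~\ref{thm:main anis} then kills $g_{\bm{b}_0}$ directly. This treats the multigraded anisotropy theorem as a black box and never touches the evaluation map or the differential formula again, which is more modular; the paper's version is more self-contained and parallels the style of Section~5. Both proofs transfer to characteristic $0$ in the same way via Lemma~\ref{lem:basis}, and both derive surjectivity by Gorenstein duality. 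One small caveat: when you collapse the sum, you should note that a term $g_{\bm{b}}^2\ell_k\bm{\ell}^{\bm{c}}$ with formal multidegree $2\bm{b}+\bm{e}_k+\bm{c}$ not equal to $\bm{a}$ vanishes automatically in $A$ (because its coarse degree is $d$ while $A_{\bm{b}'}=0$ unless $\bm{b}'\leq\bm{a}$), so the parity argument identifies the unique surviving term rather than a component of a nonzero element.
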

\begin{proof}
As the characteristic $0$ case follows from characteristic $2$ case by the similar argument as in Section 6, we assume that $\mathbbm{k}$ is a field of characteristic $2$.
We first show the injectivity for $i \leq \min \{ \frac{d-1}{2}, \frac{a_1}{2},\ldots,\frac{a_m}{2} \}$.
For this, it suffices to show that the quadratic form $\mathcal{Q}:A_i \rightarrow A_{2i+1}$ defined by $\mathcal{Q}(g)=g^2 \ell$ satisfies the property that $\mathcal{Q}(g) \neq 0$ if $g \neq 0$.
Suppose that $g$ is a nonzero element in $A_i$.
Then by Poincar\'{e} duality (as an $\mathbb{N}$-graded algebra), there is a monomial $x_K$ of total degree $d-i$ such that $gx_K \neq 0$ in $A_d$.
Let $\bm{b}$ be the degree of the monomial $x_K$ in $\mathbb{N}^m$-grading.
Then we have $\bm{0} \leq \bm{b} \leq \bm{a}$ and $|\bm{b}|=d-i$. 
By $i \leq \min_{j \in [m]} \frac{a_j}{2}$, we have $\bm{b} \geq \frac{\bm{a}}{2}$. 
By this and $i \leq \frac{d-1}{2}$, the square $x_K^2$ can be written as $x_K^2=x_Ix_{v^*}x_J$ for some $\kappa$-transversal sequences $I$, some vertex $v^* \in V(\Delta)$, and some length $d-2i-1$ sequence $J$ of vertices.
We have the identity
\begin{align*}
\partial_I\Psi(\mathcal{Q}(g)x_J) = \sum_{v \in V(\Delta)} \partial_I\Psi(g^2x_vx_J)
&=\sum_{v \in V(\Delta)} \Psi(g\sqrt{x_Ix_vx_J})^2 & (\text{by Corollary~\ref{cor:diff general}})\\
&\overset{(*)}{=}\Psi(g\sqrt{x_Ix_{v^*}x_J})^2 = \Psi(gx_K)^2 \neq 0,
\end{align*}
where $(*)$ follows from the uniqueness of a variable $x_u$ with $\sqrt{x_Ix_ux_J} \neq 0$ for fixed $I$ and $J$. Thus, we have $\mathcal{Q}(g) \neq 0$. Hence the desired injectivity is derived.

For the remaining surjectivity, note that, by Gorensteiness, $A_{d-i'}$ is a dual vector space of $A_{i'}$ for each $i'$.
Thus, the surjectivity of $\times \ell:A_{d-1-i} \rightarrow A_{d-i}$ is equivalent to the injectivity of $\times \ell :A_{i} \rightarrow A_{i+1}$.
Hence the desired surjectivity at the other end follows.
\end{proof}
\begin{remark}
    For example, if $(a_1,\ldots,a_m)=(2,\ldots,2)$, 
\end{remark}
\subsection{Examples that fail the $\mathbb{N}$-graded weak Lefschetz property}
In Theorem~\ref{thm:fullrank at ends}, we have shown that $\times\ell:A_i \rightarrow A_{i+1}$ is generically full-rank at appropriate ends.
In contrast to~Theorem~\ref{thm:fullrank at ends}, we construct examples of an $\bm{a}$-balanced simplicial sphere such that the full-rankness generically fails around the middle degree.
In~\cite{CJMN,Oba}, such an example is already obtained for $\bm{a}=(a_1,1)$ with $a_1 \geq 2$. Let us start by recalling this construction.

The \emph{stellar subdivision} of $\Delta$ at a face $\sigma \in \Delta$ is the simplicial complex 
\begin{equation*}
(\Delta -\sigma) \cup \{\{v_\sigma\} \cup \tau: \tau \in \st_{\sigma}(\Delta), \tau \not\supset \sigma \},
\end{equation*} 
where $v_\sigma$ is a new vertex.
Note that stellar subdivision preserves the underlying topological space.
Consider a stacked $a_1$-sphere, and apply stellar subdivisions at every facet of it. 
The resulting simplicial sphere $\Delta$ is stacked and $(a_1,1)$-balanced with the $2$-coloring $\kappa$ defined by $\kappa(v)=2$ if and only if $v$ is a new vertex.
It is shown in \cite{CJMN,Oba} that for any choice of $\mathbb{N}^2$-graded l.s.o.p.~$\Theta$ of $\mathbbm{k}[\Delta]$, in the Artinian reduction $A=\mathbbm{k}[\Delta]/(\Theta)$, the map $\times\ell:A_1 \rightarrow A_2$ is degenerate for any $\ell \in A_1$.

To generalize this construction, we consider a partial barycentric subdivision~\cite{AW}.
We denote by $\Delta^{(i)}$ (resp.~$\Delta^{(\leq i)}$, $\Delta^{(\geq i)}$) the set of all faces of $\Delta$ of dimension equal to (resp.~at most, at least) $i$.
For a subset $\mathcal{S}$ of the power set of a finite set, the simplicial complex \emph{spanned by $\mathcal{S}$} is $\{T:T\subset S \text{ for some } S \in \mathcal{S}\}$.
For $0 \leq l < d$ and a pure $(d-1)$-dimensional simplicial complex $\Delta$, the \emph{$l$-th partial barycentric subdivision} $\sd^l(\Delta)$ of $\Delta$ is defined as follows: For each $\tau \in \Delta^{(\geq d-l)}$, let $v_{\tau}$ be a new vertex associated to $\tau$, and define $\sd^l(\Delta)$ as the simplicial complex spanned by the sets $\tau_0 \cup \{v_{\tau_1}, \ldots, v_{\tau_l}\}$ over all flags of faces $\tau_0 \subsetneq \tau_1 \subsetneq \cdots \subsetneq \tau_l$ of $\Delta$ with $\dim \tau_i=d-l-1+i$ for $i=0,\ldots,l$.
Note that we have $\sd^0(\Delta)=\Delta$ and $\sd^{d-1}(\Delta)$ coincides with the barycentric subdivision of $\Delta$. 
Equivalently, the $l$-th partial barycentric subdivision is obtained by ordering the faces of $\Delta^{(\geq d-l)}$ in decreasing order of dimension and then applying stellar subdivisions one by one.
As stellar subdivision preserves the underlying topological space, $\Delta$ and $\sd^l(\Delta)$ have the homeomorphic geometric realizations.
For a pure $(d-1)$-dimensional simplicial complex $\Delta$ and $l<d$, $\sd^l(\Delta)$ is $(d-l,\bm{1}_l)$-balanced with the coloring $\kappa:V(\sd^l(\Delta)) \rightarrow [l+1]$ defined by $\kappa(v)=1$ for $v \in V(\Delta)$ and $\kappa(v_\tau)=|\tau|-d+l+1$ for $v_\tau \in V(\sd^l(\Delta)) \setminus V(\Delta)$, where $\bm{1}_{l}$ is the all ones vector of length $l$.
By grouping the last $l$ colors of $\kappa$ into one color, one can consider $\sd^l(\Delta)$ as a $(d-l,l)$-balanced simplicial complex. We denote the resulting $2$-coloring by $\kappa^\circ$.

Our example is an $i$-th partial barycentric subdivision of a sphere with $h_i=h_{i+1}$. (Such a sphere is called $i$-stacked~\cite{MN}, and a stacked $(d-1)$-sphere is always $i$-stacked for $i < \lfloor d/2\rfloor $.)
More precisely, we have the following. 
\begin{theorem} \label{thm:counterexample}
Let $\mathbbm{k}$ be an infinite field and $i, d$ be positive integers with $i < \frac{d}{2}$.
Let $\Delta$ be a simplicial $(d-1)$-sphere with $h_i(\Delta)=h_{i+1}(\Delta)$, and $\sd^i(\Delta)$ be the $i$-th partial barycentric subdivision of $\Delta$ with the associated $2$-coloring $\kappa^\circ$.
Consider an l.s.o.p.~$\Theta=(\theta_1,\ldots,\theta_d)$ for $\mathbbm{k}[\sd^i(\Delta)]$ such that the $j$-th linear form $\theta_{j}$ is of degree $\bm{e}_2$ under the $\mathbb{N}^2$-grading induced by $\kappa^\circ$ for $j=d-i+1,\ldots,d$.
Then, for the Artinian reduction $A=\mathbbm{k}[\sd^i(\Delta)]/(\Theta)$ with respect to $\Theta$, the multiplication map $\times \ell:A_i \rightarrow A_{i+1}$ is degenerate for any linear form $\ell \in A_1$.

In particular, if $(\sd^i(\Delta),\kappa^\circ)$ is viewed as an $\bm{a}=(d-i,i)$-balanced simplicial complex, there is no $\mathbb{N}^2$-graded l.s.o.p.~$\Theta$ such that the Artinian reduction $\mathbbm{k}[\sd^i(\Delta)]/(\Theta)$ has the weak Lefschetz property as an $\mathbb{N}$-graded algebra.
\end{theorem}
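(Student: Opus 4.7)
The plan is to exhibit a single dimension obstruction inside a color-$1$ quotient of $A$, descend it to non-surjectivity of $\times\ell\colon A_{d-i-1}\to A_{d-i}$ for every $\ell$, and then transport that to non-injectivity at degree $i$ via Gorenstein duality. First I would introduce the color-$1$ quotient: zeroing out the color-$2$ variables $x_v$ (for $v\in V(\sd^i(\Delta))\setminus V(\Delta)$) defines a degree-preserving surjection of $\mathbbm{k}$-algebras
\[
\pi\colon A=\mathbbm{k}[\sd^i(\Delta)]/(\Theta)\twoheadrightarrow \overline{A}:=\mathbbm{k}[\Delta^{[d-i]}]/(\bar\theta_1,\ldots,\bar\theta_{d-i}),
\]
where $\Delta^{[d-i]}$ denotes the $(d-i-1)$-skeleton of $\Delta$ and $\bar\theta_j$ is the color-$1$ part of $\theta_j$; the hypothesis on $\Theta$ is used precisely to ensure that $\theta_{d-i+1},\ldots,\theta_d$ die under $\pi$. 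Since $\overline{A}$ is finite-dimensional as a further quotient of the Artinian $A$, and since $\Delta^{[d-i]}$ is Cohen--Macaulay (skeleta of Cohen--Macaulay complexes being Cohen--Macaulay), the linear forms $\bar\theta_1,\ldots,\bar\theta_{d-i}$ automatically form an l.s.o.p.\ for $\mathbbm{k}[\Delta^{[d-i]}]$, so $\dim\overline{A}_j=h_j(\Delta^{[d-i]})$ for $0\le j\le d-i$.

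The second step is to prove $h_{d-i}(\Delta^{[d-i]})>h_{d-i-1}(\Delta^{[d-i]})$. Substituting the inversion $f_{l-1}(\Delta)=\sum_{p\le l}\binom{d-p}{l-p}h_p(\Delta)$ into the defining expression of $h_j(\Delta^{[d-i]})$ and performing a Vandermonde collapse yields the closed form
\[
h_j(\Delta^{[d-i]})=\sum_{p=0}^{j}h_p(\Delta)\binom{i-1+j-p}{j-p}\qquad (0\le j\le d-i),
\]
and a summand-by-summand Pascal identity then telescopes the consecutive difference at the top to
\[
h_{d-i}(\Delta^{[d-i]})-h_{d-i-1}(\Delta^{[d-i]})=h_{d-i}(\Delta)+\sum_{p=0}^{d-i-1}h_p(\Delta)\binom{d-p-2}{d-i-p}.
\]
All summands are nonnegative, and since $\Delta$ is a simplicial sphere and $i<d/2$ one has $h_{d-i}(\Delta)=h_i(\Delta)\ge 1$ by Dehn--Sommerville and GLBI. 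So the difference is strictly positive; consequently, multiplication by any $\bar\ell\in\overline{A}_1$ cannot surject $\overline{A}_{d-i-1}$ onto the strictly larger $\overline{A}_{d-i}$, and because $\pi$ intertwines multiplication, $\times\ell\colon A_{d-i-1}\to A_{d-i}$ fails to be surjective for every $\ell\in A_1$.

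In the third step I would invoke Gorenstein duality. Because $\sd^i(\Delta)$ is a simplicial $(d-1)$-sphere, $A$ is Artinian Gorenstein of socle degree $d$, so the pairing $A_j\times A_{d-j}\to A_d\cong\mathbbm{k}$ is perfect and multiplication by $\ell$ is self-adjoint, yielding
\[
\ker(\times\ell\colon A_i\to A_{i+1})\cong \Coker(\times\ell\colon A_{d-i-1}\to A_{d-i})^{*}\ne 0
\]
for every $\ell\in A_1$; that is, $\times\ell$ is never injective at degree $i$. Combining with the generalized lower bound inequality $\dim A_i\le\dim A_{i+1}$ (valid for the sphere $\sd^i(\Delta)$ at $i<d/2$), non-injectivity forces rank $<\dim A_i\le\dim A_{i+1}$, so surjectivity also fails. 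Hence $\times\ell$ is degenerate for every $\ell\in A_1$, which in particular precludes any $\mathbb{N}^2$-graded l.s.o.p.\ from endowing $A$ with the weak Lefschetz property as an $\mathbb{N}$-graded algebra.

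The main obstacle is the combinatorial identity for $h_j(\Delta^{[d-i]})$ and the subsequent Pascal telescoping: both rest on a Vandermonde simplification of the form $\sum_{u=0}^m(-1)^u\binom{a-u}{m-u}\binom{b}{u}=\binom{m+b-a-1}{m}$ and must be checked carefully against the index conventions (particularly the binomial $\binom{d-p-2}{d-i-p}$ for $p$ close to $d-i$, where it degenerates to $0$). By contrast, the quotient construction, the automatic l.s.o.p.\ property of $(\bar\theta_j)_{j\le d-i}$, and the Gorenstein duality transfer are formal once the skeleton inequality is in hand.
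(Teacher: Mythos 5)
Your proposal is correct and ends up proving the theorem, but it takes a genuinely different route from the paper's. Both arguments project onto the same color-$1$ quotient
\[
\pi\colon A\twoheadrightarrow \overline{A}=\mathbbm{k}[\Delta^{(\leq d-i-1)}]/(\bar\theta_1,\ldots,\bar\theta_{d-i}),
\]
but the paper works directly in degrees $i$ and $i+1$: it first uses the hypothesis $h_i(\Delta)=h_{i+1}(\Delta)$ (together with the fact that $\sd^i$ only stellarly subdivides faces of codimension at most $i$) to get $\dim A_i=\dim A_{i+1}$, so that it suffices to rule out surjectivity of $\times\ell\colon A_i\to A_{i+1}$; this is done by bounding $\dim\mathrm{coker}$ below by $h_{i+1}(\Delta^{(\leq d-i-1)})-h_i(\Delta^{(\leq d-i-1)})$, whose positivity is read off Stanley's triangle table. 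You instead compare the \emph{top two} entries $h_{d-i}-h_{d-i-1}$ of the skeleton's $h$-vector (where positivity is nearly automatic: it is $h_{d-i}(\Delta)$ plus nonnegative terms), deduce non-surjectivity of $\times\ell\colon A_{d-i-1}\to A_{d-i}$, and then transport this to non-injectivity at degree $i$ by Gorenstein duality. The tradeoff is that because you never invoke $h_i(\Delta)=h_{i+1}(\Delta)$, you cannot conclude $\dim A_i=\dim A_{i+1}$, so you must additionally appeal to the $g$-theorem / GLBI for the sphere $\sd^i(\Delta)$ to convert non-injectivity into non-surjectivity. This is a substantially heavier input than the paper's elementary dimension count, but it does yield the slightly stronger statement that the $i$-stackedness hypothesis is not needed for degeneracy.

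Two small remarks. First, the auxiliary Vandermonde identity you quote, $\sum_{u=0}^{m}(-1)^u\binom{a-u}{m-u}\binom{b}{u}=\binom{m+b-a-1}{m}$, is off by a sign (test $m=1,a=2,b=3$); the closed form $h_j(\Delta^{(\leq d-i-1)})=\sum_{p\leq j}\binom{i-1+j-p}{j-p}h_p(\Delta)$ that you actually use is nonetheless the standard skeleton-$h$-vector formula and is correct, and your Pascal telescoping from it is also correct. Second, the appeal to GLBI to get $h_{d-i}(\Delta)\geq 1$ is more than is needed: for a sphere the generic Artinian reduction is standard graded Gorenstein with $A_d\neq 0$, so $A_1^d\neq 0$ forces $A_j\neq 0$ for all $0\leq j\leq d$, giving $h_j\geq 1$ without Lefschetz input.
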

\begin{proof}
As the geometric realizations of $\Delta$ and $\sd^i(\Delta)$ are homeomorphic, $\sd^i(\Delta)$ is a simplicial $(d-1)$-sphere.
As $\sd^i(\Delta)$ is obtained by a sequence of stellar subdivisions of faces of codimension at most $i$, we have $h_{i+1}(\sd^i(\Delta))-h_{i}(\sd^i(\Delta))=h_{i+1}(\Delta)-h_{i}(\Delta)=0$.
Hence $\dim A_i=h_{i}(\sd^i(\Delta))=h_{i+1}(\sd^i(\Delta))=\dim A_{i+1}$ holds. 
Thus it suffices to prove that the multiplication map $\times \ell:A_i \rightarrow A_{i+1}$ cannot be surjective for any linear form $\ell \in A_1$.
Intuitively speaking, in the perspective of skeletal rigidity~\cite{TW},
under the normalization $\ell=\sum_{v \in V(\sd^i(\Delta))} x_v$, the surjectivity of $\times \ell:A_i \rightarrow A_{i+1}$ is equivalent to the affine $i$-stress-freeness of the framework $(\sd^{i}(\Delta),p)$, where $p$ is the point configuration associated to $\Theta$. 
However the $(d-i)$-dimensional subframework induced by $V(\Delta)$ has the same $i$-skeleton as a $(d-1)$-sphere $\Delta$, so it must support an affine $i$-stress in $d-i$-dimension.
We now turn this geometric intuition to an algebraic proof.

Let $W=V(\Delta)$ be the vertex subset of $\sd^i(\Delta)$.
Consider the induced subcomplex $\sd^i(\Delta)_W=\{\tau \in \sd^i(\Delta):\tau \subset W\}$.
By definition, we have $\sd^i(\Delta)_W = \Delta^{(\leq d-i-1)}$.
Let $\pi:\mathbbm{k}[x_v : v \in V(\sd^i(\Delta))] \twoheadrightarrow \mathbbm{k}[x_v : v \in V(\Delta)]$ be the natural projection.
We have $\pi(\mathbbm{k}[\sd^i(\Delta)])=\mathbbm{k}[\sd^i(\Delta)_W]$ by definition and $\pi(\theta_{d-i+1})= \cdots= \pi(\theta_d)=0$ by the assumption. 
Thus for any $\ell \in A_1$ we have
\begin{align} 
&\dim \left(\mathbbm{k}[\sd^i(\Delta)]/(\Theta,\ell)\right)_{i+1} \label{eq:affine stress} \\
\geq& \left(\pi(\mathbbm{k}[\sd^i(\Delta)])/(\pi(\theta_1),\ldots,\pi(\theta_d),\pi(\ell)\right)_{i+1} \notag \\
 =& \left(\mathbbm{k}[\Delta^{(\leq d- i-1)}]/(\widetilde{\theta}_1,\ldots,\widetilde{\theta}_{d-i},\pi(\ell)\right)_{i+1} \notag \\
\overset{(*)}{\geq}& \dim \left(\mathbbm{k}[\Delta^{(\leq d-i-1)}]/(\widetilde{\theta}_1,\ldots,\widetilde{\theta}_{d-i})\right)_{i+1}- \dim \left(\mathbbm{k}[\Delta^{(\leq d- i-1)}]/(\widetilde{\theta}_1,\ldots,\widetilde{\theta}_{d-i})\right)_{i}, \label{eq:skeleton}
\end{align}
where we denote $\widetilde{\theta}_j=\pi(\theta_j)$, and the inequality ($*$) holds since $\pi(\ell)$ is a linear element.
Since the surjectivity of $\times\ell:A_i \rightarrow A_{i+1}$ is equivalent to the value (\ref{eq:affine stress}) being $0$, it suffices to prove that the value (\ref{eq:skeleton}) is positive.

To compute (\ref{eq:skeleton}), note that
$(\widetilde{\theta}_1,\ldots,\widetilde{\theta}_{d-i})$ is an l.s.o.p.~for $\mathbbm{k}[\Delta^{(\leq d-i-1)}]$ by the Kind-Kleinschmidt's criterion~\cite[Lemma III.2.4]{Sta-book}.
Observe also that $\Delta^{(\leq d - i-1)}$ is Cohen-Macaulay as $\Delta$ is Cohen-Macaulay\footnote{For example homological condition of Cohen-Macaulayness can be directly checked as follows. For each $\tau \in \Delta^{(\leq d- i-1)}$, $\lk_\tau(\Delta^{(\leq d- i-1)}) =(\lk_\tau \Delta )^{\leq (d-i-|\tau| -1)}$ holds. So $\widetilde{H}_j(\lk_\tau(\Delta^{(\leq d- i-1)});\mathbbm{k})=\widetilde{H}_j(\lk_\tau \Delta;\mathbbm{k})=0$ for $j \leq d-i-|\tau|-2$.}.
Thus we have
\[
\dim \left(\mathbbm{k}[\Delta^{(\leq d- i-1)}]/(\widetilde{\theta}_1,\ldots,\widetilde{\theta}_{d-i}\right)_{j}=
h_{j}(\Delta^{(\leq d-i-1)}) \qquad \text{ for } j=0,\ldots,d-i.
\]
Now in the Stanley's triangle table for $\Delta$ (see~\cite[p.250]{Zie}), the $h$-vector of $\Delta^{(\leq d-i-1)}$ appears in the $(d-i)$-th row. So the consecutive difference $h_{i+1}(\Delta^{(\leq d-i-1)})-h_{i}(\Delta^{(\leq d-i-1)})$ appears in the $(d-i+1)$-th row. As the $h$-vector of $\sd^i(\Delta)$ is positive, all the entries in the triangle table is positive. So, (\ref{eq:skeleton}) is positive. 
We thus verified the degeneracy of $\times\ell:A_i \rightarrow A_{i+1}$.
\end{proof}

\paragraph{Acknowledgment}
The author thanks Satoshi Murai deeply for asking me motivating questions and also for giving comments on the manuscript. I have also benefited greatly from his unpublished manuscript. 
The author also thanks Shin-ichi Tanigawa for his supervision and helpful discussion.
We thank anonymous referees for careful reading and comments that help improve the presentations.
\bibliographystyle{plain}
\bibliography{myreference}

\end{document}